\documentclass{amsart}
\usepackage[left=3 cm,top=3 cm,right= 3 cm,bottom=4cm]{geometry}

\usepackage[utf8]{inputenc}
\usepackage[english]{babel}
\usepackage{amsmath, amssymb, amsthm, mathtools, mathrsfs, tikz, ifthen, lmodern, xcolor, booktabs, cancel, graphicx, nicefrac, changepage, enumerate, dsfont,listings, float, enumitem, booktabs, hyperref,nameref,url}
\usepackage{subcaption}
\usepackage{todonotes}
\captionsetup{compatibility=false}

\DeclareMathOperator*{\argmax}{arg\,max}

\DeclareMathOperator*{\esssup}{ess\,sup}
\DeclareMathOperator*{\essinf}{ess\,inf}

\DeclareMathOperator*{\Interior}{int}
\DeclareMathOperator*{\supp}{supp}

\makeatletter
\def\namedlabel#1#2{\begingroup
   \def\@currentlabel{#2}%
   \phantomsection\label{#1}\endgroup
}
\makeatother

\theoremstyle{plain}
\newtheorem{theorem}{Theorem}[section]
\newtheorem{lemma}[theorem]{Lemma}

\newtheorem{proposition}[theorem]{Proposition}
\newtheorem{remark}[theorem]{Remark}

\theoremstyle{definition}
\newtheorem{definition}[theorem]{Definition}
\newtheorem{assumption}[theorem]{Assumption}

\title[Jump Signals in Optimal Investment]{How much should we care about what others know?\\Jump signals in optimal investment under relative performance concerns}

\author{Peter Bank and Gemma Sedrakjan\\Technische Universit\"at Berlin}
\date{}
\thanks{PB gratefully acknowledges financial support by Deutsche Forschungsgemeinschaft through CRC/TRR 388 ``Rough Analysis, Stochastic
Dynamics and Related Fields”, Projects B03 and B05. GS gratefully acknowledges financial support by Deutsche Forschungsgemeinschaft through IRTG 2544 ``Stochastic
Analysis in Interaction”. We are also grateful to two anonymous referees for their helpful and constructive comments and suggestions.}

\begin{document}

\begin{abstract}
We present a multi-agent and mean-field formulation of a game between investors who receive private signals informing their investment decisions and who interact through relative performance concerns. A key tool in our model is a Poisson random measure which drives jumps in both market prices and signal processes and thus captures common and idiosyncratic noise. Upon receiving a jump signal, an investor evaluates not only the signal's implications for stock price movements but also its implications for the signals received by her peers and for their subsequent investment decisions. A crucial aspect of this assessment is the distribution of investor types in the economy. These types determine their risk aversion, performance concerns, and the quality and quantity of their signals. We demonstrate how these factors are reflected in the corresponding HJB equations, characterizing an agent's optimal response to her peers' signal-based strategies. The existence of equilibria in both the multi-agent and mean-field game is established using Schauder’s Fixed Point Theorem under suitable conditions on investor characteristics, particularly their signal processes. Finally, we present numerical case studies that illustrate these equilibria from a financial-economic perspective. This allows us to address questions such as how much investors should care about the information known by their peers.  
\end{abstract}

\maketitle

\begin{description}
\item[Mathematical Subject Classification (2020)] 91G10, 91A06, 91A16, 91B06 
\item[Keywords] Optimal investment, jump signals, relative performance concern, idiosyncratic information
\end{description}

\tableofcontents

\section{Introduction}
Information is a key driver of investment decisions, especially when shaped by idiosyncratic signals about impending stock price shocks. When combined with relative performance concerns---the competitive drive to measure one's wealth against the average---this creates a compelling dynamic: investors must not only interpret their own signals but also account for their peers’ information flows and resulting actions, leading to questions such as: How much should we care about what others know?

In this paper, we propose a simple and tractable framework that allows us to address such questions. We combine the works of \cite{BK22} and \cite{LZ19} to obtain a financial market model featuring a heterogeneous population of agents who make strategic decisions based on dynamically generated idiosyncratic signals on imminent price shocks. Their goal is to maximize their own expected utility from a ratio between their terminal wealth and a measure of their peers' average wealth.

We start with a finite number of investors who trade between their individual risk-free bank accounts that pay a constant interest rate and individual risky assets modeled by exponential Lévy processes. These processes are driven by an idiosyncratic Brownian motion, a common Brownian motion and a Poisson random measure generating both common and idiosyncratic jump noise. Such an asset specialization reflects real-world observations: investors often concentrate on stocks they are familiar with (cf.\ \cite{Merton1987}), are geographically bound to (cf.\ \cite{CovalMoskowitz1999}), or specialize in certain industries (cf.\ \cite{KSZ2005}). That said, the possibility of one common stock market for all investors is a special case of this setting.

The key modeling feature is the presence of \emph{short-lived idiosyncratic jump signals}: at (random) times the market is subject to a price shock and, at the same instant, some or all investors may receive idiosyncratic signals that carry imperfect information about that imminent price jump. Investors who receive a signal can react instantly by adjusting their fraction of wealth invested in the stock right \emph{before} the price shock materializes. So, intuitively, the timeline of events is as follows: first the investor receives a signal about a price jump, allowing her to adjust her position right before the actual price jump is realized. In other words, the investor reacts to a signal on an impending price shock before the latter materializes.
In \cite{BK22}, the authors develop a rigorous theory for these kinds of jump signals and signal-driven strategies in the context of a single investor's optimal investment problem. Formally, strategies should be measurable with respect to a so-called Meyer-$\sigma$-field that enriches the predictable field by the signal; we refer the reader to \cite{Len80,BB19} and \cite{BK22} for the precise definitions and constructions.

Importantly, in the present paper the information provided by signals may differ between investors: for example, some investors may receive more frequent but less reliable signals than others, or different investors may observe different signal-to-noise ratios. To capture investor heterogeneity, we therefore allow differences both in the information structure (signal arrival intensity and signal quality) and in economic parameters such as risk aversion and performance-concern coefficients. Empirical evidence from \cite{Koijen2014} supports the presence of heterogeneity in risk aversion among fund managers and motivates consideration of agents who differ in their attitudes toward absolute and relative wealth. Each investor's characteristics are collected in her \emph{type vector}, whose distribution is assumed to be common knowledge; this assumption is partially justified by the public observability of past returns and other performance indicators (cf.\ \cite{Koijen2014}), which allow market participants to infer, at least approximately, peers' types. 

Strategic interactions among investors arise from various motivations. For individual investors, these interactions may stem from aspirations to accumulate more wealth than their peers (cf.\ \cite{Abel90,RelativeWealthConcerns}). For fund managers, interactions may reflect career concerns or efforts to attract inflows (cf.\ \cite{BasakMakarov2015}). We model these interactions through relative performance concerns: investors adopt power utility where a concern parameter mediates the trade-off between absolute terminal wealth and a measure of peers' geometric mean terminal wealth. Optimal investment under such relative performance criteria has been studied in \cite{FDR11,EspinosaTouzi2015,BasakMakarov2015,Bielagk,LZ19}, among others.

The relative performance criteria lead to an interdependence of the investors' target functionals and thus introduce a stochastic game of optimal investment which calls for the search for Nash equilibria. To address this, we focus on a class of \textit{signal-driven} strategies, where investment decisions depend only on the signal (if any) received at each time. This choice is motivated by the work of \cite{LZ19}, where the authors construct the unique equilibrium where investors hold time-independent constant positions. In our setting, this corresponds to what we call signal-driven strategies where agents employ time-independent policies reacting to the jump signals received, i.e., they continuously hold a baseline position in the stock, and react with instantaneous adjustments to jump signals.
The existence of a Nash equilibrium with such signal-driven strategies is established in two steps. First, we solve a single investor's best response problem when her peers follow fixed signal-driven strategies. Using the approach in \cite{BK22} we formulate a Hamilton-Jacobi-Bellman (HJB) equation and prove the corresponding verification result which characterizes best response strategies as solutions to one-dimensional concave optimization problems and identifies them to also be signal-driven. This allows us to introduce, in a second step, a fixed-point mapping that encodes the so-called \emph{consistency condition}: each strategy must coincide with its best response given the peers' strategies. Under suitable assumptions on the investors' types, we use Schauder's fixed point theorem to establish the existence of an equilibrium.

The focus on signal-driven strategies is even more natural in a mean field setting where a single agent's action does not impact the overall average considered by her peers. We thus provide a formulation for the corresponding mean field game via randomization over the investors' type vectors. The random type then represents the representative investor for which we can again solve the best response problem having fixed the competitive environment. An important intermediate step is the derivation of the geometric mean wealth in the mean field setting by conditioning on the common noise. For this, we carefully disentangle which part of the randomness is common to all investors and which part is idiosyncratic (and thus averaged out in the mean field limit).

Finally, we prove existence of a mean field equilibrium in two scenarios. First, we analyze the case of finitely many investor types under suitable assumptions, employing arguments analogous to those used in the multi-agent game. Secondly, when the mean field allows for infinitely many different types, we devise an alternative fixed-point map which permits us to prove existence of an equilibrium under the assumption of finitely many distinct common shock marks.

In numerical experiments we investigate how idiosyncratic investment signals affect investors with relative performance concerns. For concreteness we fully specify a mean field game with two types of investors and introduce a suitable certainty-equivalent measure for one of these types. By varying the other type's signal quality and frequency, its share in the population, and its risk preferences, we explore how much an investor cares about what is known by her peers. We confirm the basic intuition that a better-informed peer erodes an investor's welfare while a worse-informed one improves it. Our experiments qualify it in three ways. First, the effect is asymmetric: trailing a well-informed peer costs more than outpacing a poorly-informed one rewards. Second, the effect's magnitude is dictated by the prevalence of the better-informed type, growing from negligible when that type is rare to pronounced when it is common. Third, the two informational dimensions trade off against one another: a peer is genuinely threatening only when her signals are both frequent and precise. Moreover, faced with superior peers, an investor cannot trade her way back to parity, since her equilibrium strategy barely responds to what her peers know; her only effective recourse is to sharpen her own information. Here, quality dominates quantity and the advantage of signal quality only widens as investors grow more risk averse.

\bigskip

\textbf{Related literature.} Multi-agent and mean field games provide a powerful framework for the study of strategic interactions among a large number of agents, such as investors in financial markets. While multi-agent games capture the finite-population dynamics where individual decisions may affect others, mean field games approximate these interactions in large populations by considering the aggregate behavior, which simplifies analysis and computation. Mean field games, introduced independently by \cite{HuangMCMFG} and \cite{LLMFG}, have become influential in financial mathematics over the last two decades; for general treatments see \cite{bensoussan2013,Carda19,ProbabilisticTheoryOfMFG}. For works closest to our setting, see \cite{EspinosaTouzi2015,Bielagk,LZ19}. The former two consider optimal investment for exponential utility with relative performance concerns in Black--Scholes markets; \cite{EspinosaTouzi2015} establishes existence and uniqueness of Nash equilibria, and \cite{Bielagk} studies $n$-agent and mean field formulations via coupled quadratic BSDEs. The latter, \cite{LZ19}, treats CARA and CRRA preferences in a Black--Scholes market with common-noise Brownian motion and obtains unique time-independent constant equilibria; it has inspired a number of subsequent works including \cite{Bauerle23,Bo2022,BWY24,LackerSoretConsumption,Reis22,FBSDE, SZ24}.

None of these works, however, considers in detail the interplay between \emph{asymmetric, short-lived information about market jumps} and relative utility maximization. There is relatively little literature treating jumps as both part of common and idiosyncratic noise. In this direction, \cite{BH24} considers a mean-field portfolio game with idiosyncratic and common jump components modeled via integer-valued random measures; they characterize equilibria for exponential (CARA) utility via McKean--Vlasov FBSDEs with jumps and show existence and uniqueness.

We take a different path and use a Poisson random measure to place, at common jump times, both idiosyncratic and common jump marks. These jump marks account both for jumps in stock prices and for idiosyncratic signals which may carry imperfect information about the price jumps. In particular, our investors can be viewed as `small insiders' who do not affect prices and for whom any information advantage on imminent price shocks is short-lived. It is precisely this short-lived, instantaneous nature of the information advantage that makes the model tractable: because signals are revealed and acted on at the same moment as the price jump, strategic complications such as information leakage (what do my trades reveal about my private signal?) or bluffing (strategically misleading other players) are irrelevant as they would not give our investors an edge.

Finally, the heterogeneity in signals allows us in our numerical illustration to consider agents with different signal frequencies and signal qualities. So, when receiving signals, the agents will muse not only about what the signal means for the ultra-short-term price development, but also about how many of their peers have also received signals and what these entail for their investment choices. Under relative performance concerns, these considerations are reflected in their decision policies and allow us to answer the question \textit{How much should we care about what others know?}

\bigskip

The remainder of this paper is organized as follows. Section \ref{sec:multi_agent_game} introduces the multi-agent game, analyzes the single-agent best-response problem and proves existence of Nash equilibria. Section \ref{sec:mean_field_game} develops the mean field formulation and proves existence of mean field equilibria. Section \ref{sec:numerics} presents numerical experiments and discusses economic implications.

\section{The Multi-Agent Game}\label{sec:multi_agent_game}

This section introduces a multi-agent framework where investors navigate interactive utility maximization in the presence of jump signals. Subsection~\ref{sec:finanial_market_model} presents the market model, the investors' signal processes and their sets of admissible strategies. Subsection~\ref{sec:types} specifies the individual utility functions, accounting for relative performance concerns and heterogeneity in risk aversion and concern parameters, which shape the investors' best response problems. Subsection~\ref{sec:multi_agent_HJB} provides a best response map for a single investor interacting with investors who employ fixed signal-driven strategies. Subsection~\ref{sec:nash_equilibrium} provides a proof of existence of a Nash equilibrium. Finally, Subsection~\ref{sec:limit} establishes a convergence result for aggregate wealth as the number of investors $n$ goes to $\infty$, paving the way for the mean field game.

\subsection{The Financial Market Model with Jump Signals}\label{sec:finanial_market_model}

In the following, we introduce the market model and specify the investors’ admissible strategies.

Let $T>0$ be a finite time horizon and $n \in \mathbb{N}_{\geq 1}$. In our model, randomness is introduced via a probability space $(\Omega,\mathfrak{A},\mathbb{P})$ which supports $n+2$ independent standard Brownian motions $W^0,W^1,...,W^{n+1}$ and an independent Poisson random measure $N=N(dt,de)$. The Poisson measure is compensated by the intensity measure $dt \otimes \nu(de)$ for a positive finite measure $\nu$ on some Polish mark space $E$ equipped with the corresponding Borel-$\sigma$-field $\mathcal{E}=\mathcal{B}(E)$. We denote by $\mathcal{F}=(\mathcal{F}_t)_{t \in [0,T]}$ the right-continuous filtration induced by the completions of 
	\[ \sigma((W^0_s,W^i_s, N([0,s]\times A)): s \in [0,t], A \in \mathcal{E}, i=1,...,n+1), \quad t \in [0,T],\] 
and by $\mathcal{P}=\mathcal{P}(\mathcal{F})$ the predictable $\sigma$-field generated by all left-continuous $\mathcal{F}$-adapted processes on $\Omega \times \mathbb{R}_+$.\\

\textbf{Market Model.} We consider a finite number of $n+1$ investors\footnote{The choice of $n+1$ (rather than just $n$) investors underscores that we are considering at least two investors and allows us later to consider a geometric average over the performance of the $n$ other investors when specifying the preference structure of the given investor.} who trade between their individual risk-free bank accounts $S^{0i}$ with interest rate $r^i \in \mathbb{R}$ and individual risky stocks $S^i$. The asset price dynamics are given by
    \begin{equation}\label{eq:S_N_investors}
        \begin{split}
        S_0^{0i}=s^{0i}_0 >0, \quad  dS_t^{0i} &= r^i S_{t-}^{0i}dt,\quad t \in [0,T],\\
        S^i_0=s_0^i>0, \quad  \frac{dS^i_t}{S^i_{t-}} &=  \kappa^i dt + \sigma^i dW^i_t + \sigma^{0i} dW^0_t + \int_E \eta^i(e) N(dt,de) ,\quad t \in [0,T],
    \end{split}
    \end{equation}
for constants $\kappa^i \in \mathbb{R}$ and $\sigma^i,\sigma^{0i}\geq 0$ such that $\sigma^i +\sigma^{0i} > 0$ for $i=1,...,n+1$ and for measurable functions $\eta^i: E \to \mathbb{R}$ for $i=1,...,n+1$. We will assume
\begin{equation}\label{eq:nonnegative}
\nu(\{\eta^i < -1\})=0,
\end{equation}
to guarantee that each $S^i$ remains nonnegative at all times.
So, while the Brownian motions $W^i$ are idiosyncratic noise for respective investor $i = 1,...,n+1$, the Brownian motion $W^0$ accounts for common noise. The Poisson random measure $N$ introduces jumps by placing marks $e \in E$ at common times for all investors. These marks translate into jump sizes $\eta^i(e)$ which may differ between the investors $i=1,\dots,n+1$.

The Poisson random measure produces \emph{common} jump times with possibly stock-specific price shocks, so this parsimoniously models events that occur simultaneously across the market (e.g.\ macro announcements, sector shocks or common liquidity events) while allowing heterogeneous impacts on different assets or portfolios. In other words, the jump times represent \emph{common} events and the marks capture how those events translate into each investor's risky asset price. In particular, purely independent, idiosyncratic jumps would correspond to $\eta^i(e)\, \eta^j(e) = 0$ for $i\neq j$ for $\nu$-almost all $e \in E$.


The simplest setting, on which we will focus in our numerical experiments, is the single common stock case where all investors trade between the same common bond and the same common stock. This corresponds to the parameter choice where, for all $i=1,...,n+1$, $r^i =r$, $\sigma^i=0$, $\kappa^i = \kappa$, $\sigma^{0i} = \sigma^0$ and $\eta^i(e)=\eta(e)$ for some $r \in \mathbb{R}$, $\kappa \in \mathbb{R}$, $\sigma^0>0$ and some measurable function $\eta : E \to \mathbb{R}$. 
The possibility for idiosyncratic stocks does, however, not add any mathematical difficulties and is thus included here for sake of consistency with the literature; see, e.g., \cite{LZ19}.
It is merely for notational simplicity that we refrain from specifying a multivariate model with several stocks.
\\ 

\textbf{Wealth Dynamics.} Let $x^i_0 > 0$ be the $i$th investor's initial wealth. The $i$th investor's investment strategies are described by a stochastic process $\hat{\phi}^i= (\hat{\phi}^i_t)_{t \in [0,T]}$ specifying the fraction of wealth invested in the risky asset $S^i$ at times $t \in [0,T]$. So, the investor's wealth process $X^i:= (X^i_t)_{t \in [0,T]} := (X^{\hat{\phi}^i}_t)_{t \in [0,T]}$ will evolve accordingly as
    \begin{equation}\label{eq:wealth}
    \begin{split}
        X_0^{i} = x^i_0, \quad \frac{dX_t^{i}}{X^{i}_{t-}} = (1-\hat{\phi}^i_t)\frac{dS_t^{0i}}{S^{0i}_{t-}} + \hat{\phi}^i_t \frac{dS_t^i}{S^i_{t-}}, \quad t \in [0,T].
    \end{split}
    \end{equation}
This will be well-defined for any real-valued predictable $\hat{\phi}^i$ satisfying 
    \begin{equation*}
        \int_0^T (\hat{\phi}^i_t)^2 dt < \infty \text{ a.s. }
    \end{equation*}
In fact, since we assumed the intensity measure $\nu$ to be finite, there will be only finitely many jumps in asset prices in each scenario so that wealth dynamics will even be defined for optional $\hat{\phi}^i$ with the above square-integrability. We refer to \cite{BK22} for the additional integrability required in the case where $\nu$ allows infinitely many jumps and refrain from considering this possibility here for ease of exposition.\\

\textbf{Jump Signals via Meyer-$\sigma$-Fields.} As already mentioned, when deciding over the proportions invested in the risky asset, investors may, at times, have the possibility to recourse to some signal that gives them extra information on impending jumps in the stock price. 

The signal process of the $i$th investor is defined as
    \begin{equation}\label{eq:signal}
        Z^i_t = \int_{\{t\}\times E} \zeta^i(e) N(ds,de), \quad t \in [0,T],
    \end{equation}
for a measurable function $\zeta^i: E \to \mathbb{R}$. Observing a value $Z^i_t\not=0$ allows the investor to infer (typically imperfect) information about the mark $e \in E$ that has been set by the Poisson random measure. As a consequence, she will revise the probability of the corresponding asset prices jumps and even start musing who among her fellow investors will also have received a signal, what this signal may have been and what reaction it will trigger. Obviously, we have to allow our investor to account for the received signal in her investment choice and so we let her choose a strategy which can be written as
    \begin{equation}\label{eq:strategy_form}
        \hat{\phi}^i_t(\omega, Z^i_t(\omega)), \quad (\omega,t) \in \Omega \times [0,T],
    \end{equation}
for some $\mathcal{P}(\mathcal{F})\otimes \mathcal{B}(\mathbb{R})$-measurable field $\hat{\phi}^i:[0,T]\times \Omega \times \mathbb{R}\to \mathbb{R}$. Thus, investor $i$ has to decide in a predictable manner about all the respective positions $\hat{\phi}^i_t(\omega,z)$, $z \in \mathbb{R}$, she will take if in the next moment she was to receive a specific signal $z= Z^i_t(\omega)$. 
Mathematically, by \cite[Corollary 2.2]{BK22}, such strategies are in fact measurable with respect to the Meyer-$\sigma$-field $\Lambda^i := \mathcal{P} \vee \sigma(Z^i)$ generated by the signal process.
We refer to \cite{Len80}, \cite{Karoui} and \cite{BB19} for the detailed theory of Meyer-$\sigma$-fields.

\bigskip 
\textbf{Admissible strategies.} It will be crucial to understand what information each investor can derive from her signals. For instance, the kind of signals received (or not) at any one time will affect the investors' positions that keep their respective wealth nonnegative almost surely. 

We follow the disintegration argument in \cite[pp.1306-1307]{BK22}. That is, we write
    \begin{align*}
        \nu(de \cap \{ \zeta^i \neq 0\}) &=\int_{\zeta^i(E)\setminus \{0\}} K^i(z,de) \mu^i(dz) \quad \text{ for } \mu^i := \nu \circ (\zeta^i)^{-1} \text{ on } \mathbb{R}.
    \end{align*}
Note that this disintegration is indeed possible since $\nu$ is finite and positive, cf. \cite[II. 1.2, p. 65]{LimitProcesses}. The finite measure $\mu^i(dz)$ describes the frequency with which the $i$th investor receives a non-zero signal $z \in \zeta^i(E)\setminus \{0\}$, while $K^i(z,de)$ describes the a posteriori distribution of the marks $e \in E$ given a signal $z \neq 0$. Note that $K^i(z,\{\zeta^i = z\}) =K^i(z,E)=1$.

In moments when investor $i$ receives a signal $z \neq 0$, she can adopt any position $\phi^i_t(z)$ that guarantees nonnegative wealth after any  imminent price jump $\eta^i(e)$ that may come from any mark $e$ that is compatible with the observed signal in the sense that $\zeta^i(e)=z$. To formalize this, we recall \eqref{eq:nonnegative} and introduce tight jump bounds
    \begin{equation*}
        \underline{\eta}^i(z):=K^i(z,\cdot)\text{-ess inf } \eta^i \geq -1 \text{ and } \bar{\eta}^i(z):=K^i(z,\cdot)\text{-ess sup } \eta^i
    \end{equation*}
which allow us to describe the positions ensuring nonnegative wealth by the interval
    \begin{align*}
         \Phi^i(z) := [-1/ \bar{\eta}^i(z), -1 / \underline{\eta}^i(z)], \quad z \in \zeta^i(E)\setminus\{0\}.
     \end{align*}
Note that these intervals are compact under our henceforth standing no arbitrage condition    
	\begin{equation}\label{eq:noarbitrage}
        \underline{\eta}^i(z) < 0 < \bar{\eta}^i(z) \text{ for $\mu^i$-a.e. }  z \in \zeta^i(E) \setminus \{0\},
    \end{equation}
which rules out that investors learn for sure the direction of the next price jump given a signal $z\neq 0$. 

In case of no signal, i.e. when $Z^i_t=z=0$, the investor learns that (if any) a mark can only be set in $\{\zeta^i=0\}$. Thus, if there are no unsignaled shocks, i.e. if $\nu(\zeta^i = 0, \eta^i \neq 0 ) = 0$, she can choose any position in $\Phi^i(0) := \mathbb{R}$. If, by contrast, there may be unsignaled shocks, i.e. if $\nu(\zeta^i = 0, \eta^i \neq 0 ) > 0$, we ensure nonnegative wealth by restricting to positions in
    \begin{equation*}
        \Phi^i(0) := 
        \begin{cases} [-1/ \bar{\eta}^i(0), -1 / \underline{\eta}^i(0) ], \quad & \text{ if } \underline{\eta}^i(0) < 0 < \bar{\eta}^i(0),\\
        [-1/ \bar{\eta}^i(0), +\infty), \quad & \text{ if } 0 \leq \underline{\eta}^i(0) \leq \bar{\eta}^i(0), \quad \\
        (-\infty, -1 / \underline{\eta}^i(0) ], \quad & \text{ if } \underline{\eta}^i(0) \leq  \bar{\eta}^i(0) \leq 0,
        \end{cases}
    \end{equation*}
where the jump bounds in this case are
    \begin{align*}
        \underline{\eta}^i(0)&:= \nu(\cdot \cap\{\zeta^i =0, \eta^i \neq 0\}) \text{-ess inf } \eta^i \geq -1, \\
        \bar{\eta}^i(0)&:= \nu(\cdot \cap\{\zeta^i =0, \eta^i \neq 0\}) \text{-ess sup } \eta^i.
    \end{align*}
Notice that the condition $\sigma^i +\sigma^{0i} >  0$ ensures no arbitrage also when unsignaled shocks can only go in one direction since it ensures that at least one of the Brownian motions $W^i$ or $W^0$ keeps stock prices fluctuating suitably.

This allows us to fix the set of admissible strategies which keep our investors' wealth levels even strictly above zero as follows:

\begin{proposition}[cf. \cite{BK22}, Corollary 2.5]\label{prop:admissible_strategy}
    Suppose \eqref{eq:nonnegative} and \eqref{eq:noarbitrage} hold. Let, for all $z \in \zeta^i(E)\cup\{0\}$, $\bar{\Phi}^i(z)$ be a compact interval included in $\{0\} \cup \Interior\Phi^i(z)$. Then, any strategy $\hat{\phi}^i$ in the set of admissible strategies
        \begin{equation*}
            \mathcal{A}^i := \{ \hat{\phi}^i \text{ of form \eqref{eq:strategy_form} such that } \hat{\phi}^i_t(Z_t^i) \in \bar{\Phi}^i(Z_t^i) \text{ for } t \in [0,T] \text{ a.s.}\},
        \end{equation*}
    admits wealth dynamics $X^i = X^{\hat{\phi}^i}$ solving \eqref{eq:wealth} which remain strictly positive throughout $[0,T]$ almost surely.
\end{proposition}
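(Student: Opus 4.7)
The plan is to write the wealth process $X^i$ as a stochastic (Doléans--Dade) exponential and then exploit the finite activity of $N$ to reduce strict positivity to checking finitely many multiplicative jump factors. Since $\nu(E)<\infty$, almost every realization of the Poisson random measure $N$ has only finitely many atoms $(\tau_k,e_k)_{k=1}^{K}$ in $[0,T]\times E$; between these jump times the dynamics are continuous, and any admissible $\tilde\phi^i=\hat\phi^i(Z^i)\in\mathcal{A}^i$ is bounded because each target interval $\bar\Phi^i(z)$ is compact. In particular the integrability $\int_0^T(\tilde\phi^i_t)^2\,dt<\infty$ holds, so the standard Doléans--Dade factorization applies to \eqref{eq:wealth} and produces
\[ X^i_t = x^i_0\,e^{M^i_t-\tfrac12\langle M^i\rangle_t+A^i_t}\prod_{\tau_k\le t}\bigl(1+\tilde\phi^i_{\tau_k}\eta^i(e_k)\bigr), \]
where $M^i_t:=\int_0^t\tilde\phi^i_s\sigma^i\,dW^i_s+\int_0^t\tilde\phi^i_s\sigma^{0i}\,dW^0_s$ and $A^i_t:=\int_0^t\bigl(r^i+\tilde\phi^i_s(\kappa^i-r^i)\bigr)ds$. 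Strict positivity of $X^i_t$ thus reduces to the positivity of each multiplicative factor $1+\tilde\phi^i_{\tau_k}\eta^i(e_k)$ almost surely.

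The key step is the pointwise verification at each atom. By Lemma~\ref{lemma:meyer_mb} and the Meyer-measurability of $\tilde\phi^i$, we have $\tilde\phi^i_{\tau_k}=\hat\phi^i_{\tau_k}(\zeta^i(e_k))$. Setting $z:=\zeta^i(e_k)$ and $\phi:=\hat\phi^i_{\tau_k}(z)\in\bar\Phi^i(z)\subset\{0\}\cup\Interior\Phi^i(z)$, the case $\phi=0$ is trivial. Otherwise $\phi$ lies strictly inside the open interval with endpoints $-1/\bar\eta^i(z)$ and $-1/\underline\eta^i(z)$ (with the appropriate one-sided variant in the degenerate sub-cases of $\Phi^i(0)$), which forces $1+\phi\eta>0$ for every $\eta\in[\underline\eta^i(z),\bar\eta^i(z)]$. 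By tightness of these essential bounds, the mark $e_k$ drawn conditionally on the signal $z$ from the disintegration kernel $K^i(z,\cdot)$ satisfies $\eta^i(e_k)\in[\underline\eta^i(z),\bar\eta^i(z)]$ almost surely; the unsignaled case $z=0$ is handled analogously, with $\sigma^i+\sigma^{0i}>0$ securing the no-arbitrage setting when unsignaled shocks are one-signed.

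The main obstacle is formalizing the evaluation of the Meyer-measurable strategy \emph{at} a jump time $\tau_k$ together with the measure-theoretic step that, conditionally on $N$ placing an atom producing signal $z$, the underlying mark is indeed distributed according to $K^i(z,\cdot)$; this is precisely what enables the pointwise application of the essential-bound estimates outside a null set of realizations. Since assumptions \eqref{eq:nonnegative} and \eqref{eq:noarbitrage} together with the definition of the intervals $\Phi^i(z)$ coincide with those of \cite[Corollary~2.5]{BK22}, the detailed disintegration argument from that reference transfers without modification and may be invoked directly.
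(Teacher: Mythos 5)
Your proposal is correct and follows essentially the same route as the paper, which itself only cites \cite[Corollary~2.5]{BK22} for this statement: write $X^i$ as a Doléans--Dade exponential, use the finiteness of $\nu$ to reduce strict positivity to the finitely many jump factors $1+\tilde{\phi}^i_{\tau_k}\eta^i(e_k)$, and verify each factor is positive because the admissible position lies in $\{0\}\cup\Interior\Phi^i(z)$ while the realized jump size lies $K^i(z,\cdot)$-a.s.\ (resp.\ $\nu(\cdot\cap\{\zeta^i=0,\eta^i\neq 0\})$-a.s.) within the tight bounds $[\underline{\eta}^i(z),\bar{\eta}^i(z)]$. The one technical point you flag --- that conditionally on a signal $z$ the mark is governed by $K^i(z,\cdot)$, so the essential bounds hold outside a $\nu$-null set of marks --- is exactly the disintegration step the paper delegates to \cite{BK22}, so deferring it there is consistent with the paper's own treatment.
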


Note that an investor only holding bonds will never end up with zero wealth and, so, $\bar{\Phi}^i(z)$ can always be chosen to include the possibility of a zero investment in stock. In general, our choice of admissible positions shields the investor from ruin and in fact ensures that she has strictly positive wealth throughout. This insistence on strictly positive wealth will become important later when investors are invited to interact with one another, because the individual optimization problems only remain meaningful if none of the other investors' wealth drops to zero; see the next Section~\ref{sec:types}. It should also be noted that compactness and convexity of $\bar{\Phi}^i(z)$ is essential for applying Schauder's fixed point theorem later.

With the financial market model, the signal and wealth processes of investors subject to admissible strategies at hand, we can move on to their individual optimal investment problems.

\subsection{Risk Preferences, Relative Performance Concerns and Investor Types}\label{sec:types}

We assume that each investor $i=1,...,n+1$ has an individual constant relative risk aversion $\alpha^i \in (0,1) \cup (1,\infty)$. She assesses her wealth $x$ not only by itself but also in view of her peers' average wealth $\bar{x}$. Her specific concern for the others' performance is measured by a parameter $\theta^i \in [0,1]$ which enters her utility function via
    \begin{equation}\label{eq:utility_function}
        u_i(x,\bar{x}):=u_{\alpha^i}(x \bar{x}^{-\theta^i}) := \frac{(x \bar{x}^{-\theta^i})^{1-\alpha^i}}{1-\alpha^i} \quad \text{ for } x,\bar{x}>0.
    \end{equation}
We furthermore let $u_i(0,0) := 0$ and, for $x,\bar{x} \in (0,\infty)$, we fix
    \begin{equation}\label{eq:utility_function_limits}
    	u_{i}(x,0) := \begin{cases}
    		+ \infty &\text{ if } \alpha^i \in (0,1),\\
    		0 &\text{ if } \alpha^i > 1,
    	\end{cases}  \quad \text{ and } \quad  u_{i}(0,\bar{x}) := \begin{cases}
    		0 &\text{ if } \alpha^i \in (0,1),\\
    		-\infty &\text{ if } \alpha^i > 1.
    	\end{cases}
    \end{equation}
For investor $i$, her peers' geometric average wealth at any time $t\in[0,T]$ is of course
    \begin{equation}\label{eq:geometric_mean}
        \bar{X}_t^{-i} := \left( \prod_{j \neq i} X^j_t\right)^{\frac{1}{n}},
    \end{equation}
and so, comparing the terminal average $\bar{X}^{-i}_T$ with her own terminal wealth $X^i_T$, she will assess this investment outcome by its utility
	\begin{equation}\label{eq:utlity_wealth}
		u_i(X^i_T,\bar{X}^{-i}_T) = u_{\alpha^i}(X^i_T / (\bar{X}_T^{-i})^{\theta^i}) = \frac{1}{1-\alpha^i}\bigg(\frac{X^i_T}{(\bar{X}_T^{-i})^{\theta^i}}\bigg)^{1-\alpha^i}.
	\end{equation}

Given this utility structure, a concern parameter $\theta^i$ close to $0$ corresponds to low (or even no) relative performance concerns while $\theta^i$ close to $1$ corresponds to high relative performance concerns of the $i$th investor. In other words, the higher the value of $\theta^i$, the more the investor's concern shifts from her net worth $X^i_T$ to its size compared to the average wealth $\bar{X}^{-i}_T$. Clearly, the geometric mean is chosen for tractability, as it allows us to exploit the homogeneity of the utility function. Economically, this specification captures a simple “keeping up with the Joneses” motive (cf. \cite{Joneses}): investors care not only about how much wealth they accumulate in absolute terms, but also about how well they perform in comparison to others. Note that degeneracies will occur when one of the investors has $X^j_T=0$ as this entails $\bar{X}^{-i}_T=0$, a singularity we will be able to avoid due to our notion of admissible strategies, see Proposition~\ref{prop:admissible_strategy}.

\begin{remark}\label{remark:utility}
    We model the interaction and relative performance concerns as in \cite[Section 3]{LZ19}. 
    The authors in \cite{LZ19} investigate both constant absolute risk aversion (CARA) and constant relative risk aversion (CRRA) utility separately. Since \cite{BK22} considers CRRA utility only, we focus here exclusively on such utility functions as well, making this paper correspond to the special case where $\theta^i=0$. Note also that we omit the case $\alpha^i=1$ corresponding to log-utility, because investors with log-utility are always indifferent to their peers, regardless of $\theta^i$. 
\end{remark}

The characteristics of each investor $i=1,...,n+1$ are now fully described and can be summarized by her corresponding \textit{type (vector)}
    \[ \mathtt{t}^i = (x_0^i,r^i,\kappa^i, \sigma^i, \sigma^{0i},\eta^i,\zeta^i,\alpha^i,\theta^i) \] 
from the \textit{type space}
	\[ \mathtt{T} := (0,\infty) \times \mathbb{R}^2 \times ([0,\infty)^2\setminus\{0,0\}) \times L^0(E) \times L^0(E) \times ((0,\infty)\setminus\{1\}) \times [0,1] \]
where $L^0(E)$ denotes the set of measurable functions mapping $E$ to $\mathbb{R}$. 

Noting that a collection of type vectors $(\mathtt{t}^1, \ldots, \mathtt{t}^{n+1})$ captures all relevant information about the investors involved, we henceforth assume types to be common knowledge, that is, all investors know their own type and the other investors' types.

It will be convenient for our analysis to make the following standing assumption:

\begin{assumption}\label{assumption:admissible_type}
    Our investors' types $(\mathtt{t}^1,...,\mathtt{t}^{n+1}) \in \mathtt{T}^{n+1}$ are such that
    \begin{enumerate}
        \item[$(\mathtt{t}_1)$]\namedlabel{itm:t1}{$(\mathtt{t}_1)$} the stock price remains nonnegative, i.e. $\nu(\{\eta^i < -1 \}) = 0$,
        \item[$(\mathtt{t}_2)$]\namedlabel{itm:t2}{$(\mathtt{t}_2)$} the no-arbitrage condition holds, i.e. $\underline{\eta}^i(z) < 0 < \bar{\eta}^i(z) \text{ for $\mu^i$-a.e. }  z \in \zeta^i(E) \setminus \{0\}$,
        \item[$(\mathtt{t}_3)$]\namedlabel{itm:t3}{$(\mathtt{t}_3)$} the signal map $e\mapsto \zeta^i(e)$ takes only finitely many values, and
        \item[$(\mathtt{t}_4)$]\namedlabel{itm:t4}{$(\mathtt{t}_4)$} jump sizes are sufficiently integrable, in the sense that
        \begin{equation*}
            \int_E (1+\eta^i(e))^p \nu(de) < \infty 
            \text{ for all } p>0.
        \end{equation*}
    \end{enumerate}
\end{assumption}

Here, \ref{itm:t3} means that, while there may be an infinite range of possible marks $e \in E$ placed by the Poisson random measure, each signal function $\zeta^i$, $i=1,\dots,n+1$ classifies this information into a range of only finitely many discrete categories. This is not unreasonable in a real-world context, as this classification helps reduce the complexity of decision making by providing a manageable amount of information that is still sufficiently useful. Mathematically, \ref{itm:t3} will be crucial to get the necessary continuity and compactness we need for our proof of existence of a Nash equilibrium. Condition~\ref{itm:t4} is a non-degeneracy assumption which ensures that expected utility functions remain finite.

Moreover, this assumption allows to show a very convenient property: 

\begin{lemma}\label{lemma:unif_bdd}
    Following an admissible strategy $\hat{\phi}^i \in \mathcal{A}^i$, each investor $i=1,\dots,n+1$ is uniformly shielded from ruin in the sense that her investment returns satisfy
        \[ 0 < \nu\text{-}\essinf (1+ (\hat{\phi}^i \circ \zeta^i)\eta^i). \]
\end{lemma}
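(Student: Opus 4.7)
The plan is to exploit two structural features: the admissibility condition forces $\hat{\phi}^i_t(\omega,z)$ into a compact subset of $\{0\}\cup\Interior\Phi^i(z)$ for every $z$, and Assumption~\ref{itm:t3} reduces the range of $\zeta^i$ to finitely many values. Together these allow me to turn a pointwise strict inequality into a uniform lower bound by taking a finite minimum.

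First I would enumerate the signal values as $\zeta^i(E)\cup\{0\}=\{z_0,z_1,\dots,z_m\}$, and treat each $z_k$ separately. For $z_k\ne 0$ (and also for $z_0=0$ in the two-sided unsignalled-shock case), $\bar\Phi^i(z_k)$ is a compact subinterval $[a_k,b_k]$ of the open interval $\Interior\Phi^i(z_k)=(-1/\bar\eta^i(z_k),-1/\underline\eta^i(z_k))$, so that $-1<a_k\bar\eta^i(z_k)$ and $-1<b_k\underline\eta^i(z_k)$. Writing the $\nu$-essential range of $\eta^i$ on $\{\zeta^i=z_k\}$ as the interval $[\underline\eta^i(z_k),\bar\eta^i(z_k)]$ (this is the content of the tight-bound definitions and the disintegration used earlier), I would argue by elementary bilinearity that, for any $\phi\in[a_k,b_k]$ and $\nu$-a.e.\ $e$ with $\zeta^i(e)=z_k$,
\[ 1+\phi\eta^i(e)\ \geq\ \delta_k\ :=\ \min\bigl(1+a_k\bar\eta^i(z_k),\ 1+b_k\underline\eta^i(z_k)\bigr)\ >\ 0,\]
since the function $(\phi,y)\mapsto 1+\phi y$ is linear in each argument and hence attains its minimum on the rectangle $[a_k,b_k]\times[\underline\eta^i(z_k),\bar\eta^i(z_k)]$ at one of the four corners, two of which ($a_k\underline\eta^i(z_k)$ and $b_k\bar\eta^i(z_k)$) are $\geq 0$.

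Next I would deal with the remaining cases for $z_0=0$: if there are no unsignalled shocks, then $\eta^i=0$ for $\nu$-a.e.\ $e\in\{\zeta^i=0\}$ and the bound $1+\hat\phi^i\cdot\eta^i=1$ is trivial; the two one-sided cases follow by the exact same corner argument, noting that although $\Phi^i(0)$ is now unbounded on one side, the compactness of $\bar\Phi^i(0)\subset\Interior\Phi^i(0)$ still yields a finite $[a_0,b_0]$ with $a_0>-1/\bar\eta^i(0)$ or $b_0<-1/\underline\eta^i(0)$ (whichever bound is finite), so the relevant corner stays strictly above $-1$. Finally, setting $\delta:=\min_{k=0,\dots,m}\delta_k>0$, which is positive because the minimum is over finitely many strictly positive numbers by~\ref{itm:t3}, I obtain the uniform lower bound $1+\hat\phi^i_t(\omega,\zeta^i(e))\eta^i(e)\geq \delta$ for $\nu$-a.e.\ $e$ and all $(t,\omega)$, which is exactly the claim.

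The only mild obstacle is bookkeeping the case distinctions at $z=0$ and making sure that the tight essential bounds $\underline\eta^i(z),\bar\eta^i(z)$ really do confine $\eta^i(e)$ to $[\underline\eta^i(z),\bar\eta^i(z)]$ for $\nu$-a.e.\ $e\in\{\zeta^i=z\}$; this is immediate from the definitions via the disintegration $\nu(\cdot\cap\{\zeta^i\ne 0\})=\int K^i(z,\cdot)\mu^i(dz)$ recalled before Proposition~\ref{prop:admissible_strategy}. Apart from this, the argument is purely geometric and finite, and the finiteness of the signal range from~\ref{itm:t3} is what converts pointwise positivity into the required uniform essinf.
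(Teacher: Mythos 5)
Your proof is correct and follows essentially the same route as the paper: reduce to a finite minimum over the signal values via \ref{itm:t3}, then use the compactness of $\bar{\Phi}^i(z)$ inside $\{0\}\cup\Interior\Phi^i(z)$ to keep $1+xy$ strictly positive on each rectangle $\bar{\Phi}^i(z)\times[\underline{\eta}^i(z),\bar{\eta}^i(z)]$. Your explicit corner argument for the bilinear map, and your bookkeeping of the $z=0$ cases, are just a spelled-out version of the paper's observation that $-1\notin\{xy : x\in\bar{\Phi}^i(z),\ y\in[\underline{\eta}^i(z),\bar{\eta}^i(z)]\}$.
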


\begin{proof}
    By \ref{itm:t3}, $\zeta^i(E)\cup \{0\}$ is a finite set. Thus, we can minimize over all possible signals $z$, positions $x$ and jumps $y$ to find that
        \begin{align*}
            1+ (\hat{\phi}^i_t \circ \zeta^i)\eta^i \geq \min_{z \in \zeta^i(E)\cup\{0\}}\min_{x \in \bar{\Phi}^i(z)} \min_{y \in [\underline{\eta}^i(z),\bar{\eta}^i(z)]} 1 + xy. 
        \end{align*}
    Further, our restriction to positions in $\bar{\Phi}^i(z)$ ensures that $-1 \notin \{ xy : x \in \bar{\Phi}^i(z), y \in [\underline{\eta}^i(z),\bar{\eta}^i(z)]\}$, for respective signals $z \in \zeta^i(E)\cup\{0\}$, and thus the right-hand side is strictly greater than zero.
\end{proof}

Now, given the other investors' strategies $\hat{\phi}^{-i}:=(\hat{\phi}^1,...,\hat{\phi}^{i-1},\hat{\phi}^{i+1},...,\hat{\phi}^{n+1})$ and their corresponding average wealth $\bar{X}^{-i}_T$ of \eqref{eq:geometric_mean}, the $i$th investor's best response problem is to find a strategy $\hat{\phi}^i$ which
    \begin{equation}\label{eq:investor_optimization_problem}
        \text{maximizes }J^{i}(\hat{\phi}^i;\hat{\phi}^{-i}) := \mathbb{E}\big[ u_i(X_T^i,\bar{X}_T^{-i}) \big] \text{ over } \hat{\phi}^i \in \mathcal{A}^i,
    \end{equation}
subject to her wealth dynamics
    \begin{equation}\label{eq:wealth_dynamics}
    \begin{split}
         X^i_0 &= x_0^i,\\
         \frac{dX^i_t}{X_{t-}^i} &=  (r^i+\hat{\phi}_t^i(0)(\kappa^i-r^i))dt + \sigma^i \hat{\phi}_t^i(0) dW^i_t + \sigma^{0i} \hat{\phi}_t^i(0) dW^0_t + \int_{E} \eta^i(e) \hat{\phi}_t^i(\zeta^i(e)) N(dt,de).
    \end{split}
    \end{equation}

Here, whenever there is no signal, the wealth process evolves depending on the agent's zero signal position $\hat{\phi}_t^i(0)$, while at times of a jump signal $\zeta^i(e)$, the wealth process changes with respect to the signal-dependent position $\hat{\phi}_t^i(\zeta^i(e))$ taken just for this moment.
Recognizing the interdependence of the investors' optimization problems through the geometric mean, we find that we are dealing with an $n$-dimensional stochastic differential game which is best understood in terms of its Nash equilibria:

\begin{definition}\label{defn:nashequilibrium}
    A vector of admissible strategies $\hat{\pi}^\star = (\hat{\pi}^{1,\star}, ....,\hat{\pi}^{n+1,\star}) \in \mathcal{A}^1 \times ... \times \mathcal{A}^{n+1}$ is called a \textit{Nash equilibrium}, if for each $i \in \{1,...,n+1\}$ we have
        \begin{equation}\label{eq:optimality_nash_equilibrium_thm}
            J^{i}(\hat{\pi}^{i},\hat{\pi}^{-i,\star}) \leq J^{i}(\hat{\pi}^{i,\star},\hat{\pi}^{-i,\star}) \text{ for all $\hat{\pi}^i \in \mathcal{A}^i$}. 
        \end{equation}
\end{definition}

So, as usual, a collection of strategies $\hat{\pi}^\star = (\hat{\pi}^{1,\star}, ....,\hat{\pi}^{n+1,\star}) \in \mathcal{A}^1 \times ... \times \mathcal{A}^{n+1}$ is called a Nash equilibrium if it is its own best response in the sense that each $\hat{\pi}^{i,\star}$, $i=1,\dots,n+1$ solves~\eqref{eq:investor_optimization_problem} for $\hat{\phi}^{-i}:=\hat{\pi}^{-i,\star}$. While finding such best responses for arbitrary peers' strategies $\hat{\phi}^{-i}=\hat{\pi}^{-i,\star}$ is a daunting task in general, it will turn out to be entirely feasible if they come from the following restricted class:

\begin{definition}\label{defn:signal-driven_strategy}
    Strategies $(\hat{\phi}^1,\dots,\hat{\phi}^{n+1}) \in \mathcal{A}^1 \times \dots \mathcal{A}^{n+1}$ are called \textit{(purely) signal-driven} if each $\hat{\phi}^i$, $i=1,\dots,n+1$, can be written in the form
    \begin{equation}\label{eq:strategyform}
        \hat{\phi}^i_t(\omega, Z^i_t(\omega))  = \phi^i( Z^i_t(\omega)), \quad (\omega,t) \in \Omega \times [0,T],
    \end{equation}
    for some $\mathcal{B}(\mathbb{R})$-measurable time-independent and deterministic function  $\phi^i: \mathbb{R} \to \mathbb{R}$. We denote by
        \[ \mathcal{A}_{\text{sig}}^i := \{ \hat{\phi}^i \in \mathcal{A}^i \text{ signal-driven in the sense of } \eqref{eq:strategyform} \}  \]
    the set of admissible signal-driven strategies for the $i$th investor.
\end{definition}

\begin{definition}\label{defn:signaldrivennashequilibrium}
    A Nash equilibrium $\pi^\star = (\pi^{1,\star}, ....,\pi^{n+1,\star })$ is called a \textit{signal-driven Nash equilibrium} if it is signal-driven in the sense of Definition \ref{defn:signal-driven_strategy}, i.e. if  $(\pi^{1,\star}, ....,\pi^{n+1,\star }) \in \mathcal{A}_{\text{sig}}^1 \times ... \times \mathcal{A}_{\text{sig}}^{n+1}$.
\end{definition}

Note that either notion of Nash equilibrium presented here is  of open-loop type, that is, the equilibrium strategies are best responses within a class of strategies written in terms of the system's driving noise.

\subsection{HJB equation and Best Response Map for the Multi-Agent Game}\label{sec:multi_agent_HJB}
The main result of this section is a verification theorem for best response controls for an investor who interacts with investors employing signal-driven strategies. As we will see, the best response in the set of \emph{all} admissible strategies $\mathcal{A}^i$ is itself signal-driven, i.e., it comes from the smaller set $\mathcal{A}^i_{\text{sig}}$. 

This result will be obtained by dynamic programming techniques similar to the case with one investor solved in \cite{BK22}. As in \cite{LZ19}, the key difference here is of course that each investor $i\in \{1,...,n+1\}$ has to take into account her peers' average mean wealth~\eqref{eq:geometric_mean}. When they pursue admissible, signal driven strategies $\pi^j \in \mathcal{A}^j_{\text{sig}}$, $j \in \{1,...,n+1\}\setminus\{i\}$, we can use It\^{o}'s formula to compute the dynamics  of this average, which, conveniently, turns out to be an exponential Lévy process:
\begin{equation}\label{eq:averageLevyStructure}
    \begin{split}
        \bar{X}^{-i}_t = \bar{x}_0 \exp\bigg( (\overline{\tau\pi})^{-i} t &+ \frac{1}{n} \sum_{j \neq i}  \sigma^j \pi^j(0) W^j_t + (\overline{\sigma^0\pi})^{-i} W^0_t \\
                &+  \int_{[0,t] \times E}  \log((\overline{1 + (\pi\circ \zeta) \eta})^{-i} (e) )  N(ds,de)  \bigg)
    \end{split}
    \end{equation}
with dynamics given by    
    \begin{equation}\label{eq:geom_wealth_dynamics}
    \begin{split}
        \bar{X}^{-i}_0 &= \bar{x}_0 := \bigg( \prod_{j\neq i} x^j_0 \bigg)^{\frac{1}{n}}\\
        \frac{d \bar{X}^{-i}_t}{\bar{X}^{-i}_{t-}} &=  (\overline{\tau\pi})^{-i} dt  + \frac{1}{2} [ ((\overline{\sigma^0\pi})^{-i})^2 + (\overline{\sigma^2\pi^2})^{-i}] dt + \frac{1}{n} \sum_{j \neq i}  \sigma^j \pi^j(0) dW^j_t \\
        & \qquad \qquad \qquad +(\overline{\sigma^0\pi})^{-i} dW^0_t  + \int_E \big[ (\overline{1 + (\pi\circ \zeta) \eta})^{-i} (e) -1  \big]  N(dt,de), \quad t \in [0,T],
    \end{split}
    \end{equation}
where
    \begin{align*}
        (\overline{\tau\pi})^{-i} &:= \frac{1}{n} \sum_{j \neq i} \big[r^j + \pi^j(0)(\kappa^j-r^j) - \frac{1}{2} ((\sigma^j)^2 + (\sigma^{0j})^2) (\pi^j(0))^2 \big] , \quad 
        (\overline{\sigma^0\pi})^{-i} :=\frac{1}{n} \sum_{j \neq i}\sigma^{0j} \pi^j(0) \\ (\overline{\sigma^2\pi^2})^{-i} &:= \frac{1}{n^2} \sum_{j\neq i}(\sigma^j)^2(\pi^j(0))^2 \text{ and } (\overline{1 + (\pi\circ \zeta) \eta})^{-i}(e) := \prod_{j \neq i} (1+ \pi^j(\zeta^j(e))\eta^j(e))^{\frac{1}{n}}.
    \end{align*}
Bringing together the perspectives from~\cite{LZ19} and~\cite[pp.1312-1314]{BK22}, we can now derive the HJB equation for the best response problem~\eqref{eq:investor_optimization_problem}. The corresponding value function is
    \begin{equation*}
        v^i(x^i_0,\bar{x}_0,T) :=\sup_{\hat{\phi}^i \in \mathcal{A}^i} \mathbb{E}\big[u_{i}(X^i_T, \bar{X}^{-i}_T)\big].
    \end{equation*}
Assuming $v^i$ to be smooth, we expect by It\^{o}'s formula that, for any admissible strategy $\hat{\phi}^i$, the value process $V_t^i := V_t^{\hat{\phi}^i,\pi^{-i}} := v^i(X^i_t,\bar{X}^{-i}_t,T-t)$, $t \in [0,T]$, to have the dynamics:
    \begin{align*}
    v^i(X^i_t,\bar{X}^{-i}_t,T-t) &= v^i(x^i_0, \overline{x}_0,T) - \int_0^t \partial_T v^i(X^{i}_{s-},\bar{X}^{-i}_{s-},T-s) ds \\
        & \quad + \int_0^t \partial_x v^i(X^{i}_{s-},\bar{X}^{-i}_{s-},T-s)dX_{s}^{i,c} + \int_0^t \partial_{\bar{x}} v^i(X^{i}_{s-},\bar{X}^{-i}_{s-},T-s) d\bar{X}^{-i,c}_s  \\
        &\quad + \int_0^t \partial_{x\bar{x}} v^i(X^i_{s-},\bar{X}^{-i}_{s-},T-s)d[X^i,\bar{X}^{-i}]^c_{s} + \frac{1}{2} \int_0^t \partial_{xx} v^i(X^{i}_{s-},\bar{X}^{-i}_{s-},T-s) d[X^i,X^i]_s^{\mathtt{c}}  \\
        &\quad + \frac{1}{2} \int_0^t \partial_{\bar{x}\bar{x}} v^i(X^{i}_{s-},\bar{X}^{-i}_{s-},T-s) d[\bar{X}^{-i},\bar{X}^{-i}]_s^{c}  \\
        & \quad + \sum_{0 < s \leq t} \left[ v^i(X^{i}_s,\bar{X}^{-i}_s,T-s) - v^i(X^i_{s-},\bar{X}^{-i}_{s-},T-s) \right],
    \end{align*}
where the superscript $(\cdot)^{c}$ denotes the continuous part of a process. Inserting the respective dynamics \eqref{eq:wealth_dynamics} and \eqref{eq:geom_wealth_dynamics}, we get
    \begin{equation*}
    \begin{split}
        &v^i(X^i_t,\bar{X}^{-i}_t,T-t) \\&= v^i(x^i_0, \bar{x}_0,T) - \int_0^t \partial_T v^i(X^{i}_{s-},\bar{X}^{-i}_{s-},T-s) ds \\
        & \quad + \int_0^t \partial_x v^i(X^{i}_{s-},\bar{X}^{-i}_{s-},T-s) X^i_{s-}\left[(r^i+\hat{\phi}^i_s(0)(\kappa^i-r^i))ds + \sigma^i \hat{\phi}^i_s(0) dW^i_s + \sigma^{0i} \hat{\phi}^i_s(0) dW^0_s\right] \\
        &\quad + \int_0^t \partial_{\bar{x}} v^i(X^{i}_{s-},\bar{X}^{-i}_{s-},T-s) \bar{X}^{-i}_{s-}\bigg[ (\overline{\tau\pi})^{-i} ds  + \frac{1}{2} [ ((\overline{\sigma^0\pi})^{-i})^2 + (\overline{\sigma^2\pi^2})^{-i}] ds \\
        &\qquad \qquad \qquad \qquad  \qquad \qquad \qquad \qquad  \qquad \qquad + \frac{1}{n} \sum_{j \neq i}  \sigma^j \pi^j(0) dW^j_s + (\overline{\sigma^0\pi})^{-i}dW^0_s \bigg]\\
        & \quad + \int_0^t \partial_{x\bar{x}} v^i(X^i_{s-},\bar{X}^{-i}_{s-},T-s) X^{i}_{s-} \bar{X}^{-i}_{s-} \sigma^{0i} \hat{\phi}^i_s(0) (\overline{\sigma^0\pi})^{-i} ds \\
        &\quad + \frac{1}{2} \int_0^t \partial_{xx} v^i(X^{i}_{s-},\bar{X}^{-i}_{s-},T-s)(X^i_{s-})^2 \left[ (\sigma^i)^2 + (\sigma^{0i})^2 \right] (\hat{\phi}^i_s(0))^2 ds \\
        &\quad + \frac{1}{2} \int_0^t \partial_{\bar{x}\bar{x}} v^i(X^{i}_{s-},\bar{X}^{-i}_{s-},T-s) (\bar{X}^{-i}_{s-})^2 [ ((\overline{\sigma^0\pi})^{-i})^2 + (\overline{\sigma^2\pi^2})^{-i}]  ds \\
        & \quad + \int_{[0,t] \times E} \bigg[ v\left(X^i_{s-}(1+\hat{\phi}^i_s(\zeta^i(e))\eta^i(e)),\bar{X}^{-i}_{s-}(\overline{1 + (\pi\circ \zeta) \eta})^{-i} (e),T-s\right) \\
        &\qquad \qquad \qquad \qquad  \qquad \qquad \qquad \qquad  \qquad \qquad \qquad  - v^i(X^i_{s-},\bar{X}^{-i}_{s-},T-s) \bigg] \big(\bar{N}(ds,de) + ds\otimes \nu(de)\big),
    \end{split}
    \end{equation*}
where, $\bar{N}$ is the compensated Poisson random measure $\bar{N}(ds,de) = N(ds,de) - ds\otimes \nu(de)$. 

Following the martingale optimality principle, we look for conditions such that the value process is a supermartingale for any admissible $\hat{\phi}^i$ and a martingale for some (then) optimal $\hat{\phi}^{i,\star}$. This leads us to the following HJB equation:

\begin{equation}\label{eq:investor_HJB}
    \begin{split}
         &-\partial_T v^i(x,\bar{x},T) + \partial_x v^i(x,\bar{x},T) x r^i  + \partial_{\bar{x}} v^i(x,\bar{x},T)\bar{x} [(\overline{\tau\pi})^{-i} + \tfrac{1}{2} [((\overline{\sigma^0\pi})^{-i})^2 +  (\overline{\sigma^2\pi^2})^{-i} ]] \\
         &\quad + \tfrac{1}{2} \partial_{\bar{x}\bar{x}} v^i(x,\bar{x},T) \bar{x}^2 [ ((\overline{\sigma^0\pi})^{-i})^2 + (\overline{\sigma^2\pi^2})^{-i}]   \\
        &\quad + \sup_{\varphi \in \bar{\Phi}^i(0)} \bigg\{ \partial_x v^i(x,\bar{x},T) x \varphi (\kappa^i-r^i)) + \partial_{x\bar{x}} v^i(x,\bar{x},T)x\bar{x} \sigma^{0i} \varphi (\overline{\sigma^0\pi})^{-i} \\
        & \quad \quad \quad \quad \quad \quad \quad \quad + \tfrac{1}{2} \partial_{xx} v^i(x,\bar{x},T)x^2 \big[ (\sigma^i)^2 + (\sigma^{0i})^2 \big] \varphi^2 \\
        &\quad \quad \quad \quad \quad \quad \quad \quad + \int_{\{ \zeta^i = 0\}}  \bigg[ v^i\big(x(1+\varphi\eta^i(e)),\bar{x} (\overline{1 + (\pi\circ \zeta) \eta})^{-i} (e),T\big) - v^i(x,\bar{x},T) \bigg] \nu(de) \bigg\}\\
        &\quad + \int_{\zeta^i(E) \setminus\{0\}}  \sup_{\varphi \in \bar{\Phi}^i(z)} \bigg\{ \int_{\{\zeta^i =z\}} \bigg[ v^i\big(x(1+\varphi \eta^i(e)),\bar{x} (\overline{1 + (\pi\circ \zeta) \eta})^{-i} (e),T\big) \\
        &\qquad \qquad \qquad \qquad \qquad \qquad \qquad \qquad \qquad \qquad \qquad \qquad \qquad \qquad - v^i(x,\bar{x},T) \bigg] K^i(z,de)  \bigg\}  \mu^i(dz) = 0
    \end{split}
    \end{equation}
    with boundary condition 
    \[ v^i(x,\bar{x},0) = u_{\alpha^i}(x \bar{x}^{-\theta^i}) = \frac{(x \bar{x}^{-\theta^i})^{1-\alpha^i}}{1-\alpha^i}.\]

To prepare the verification theorem for the HJB equation, we include the following auxiliary lemma.

\begin{lemma}\label{lemma:majorante}
    For all $p> 0$, there exist $L^p(\nu)$-integrable functions $f:E \to \mathbb{R}$ and $g:E \to \mathbb{R}$ such that for all $i \in \{1,...,n+1\}$ we have:
    
    For all $z \in \zeta^i(E) \cup \{0\}$, for all $\varphi \in \bar{\Phi}^i(z)$
        \[ (1+\varphi\eta^i(e)) \leq f(e), \]
    and, for all admissible signal-driven strategies $\pi^{-i}=(\pi^j)_{j\neq i}$,
		\begin{align*}
            (\overline{1 + (\pi\circ \zeta) \eta})^{-i} (e)\leq g(e).
        \end{align*}
\end{lemma}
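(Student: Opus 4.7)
The plan is to construct $f$ directly from the problem data and then recover $g$ from $f$ by an AM--GM argument.

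For $f$: Since by \ref{itm:t3} the signal range $\zeta^i(E)$ is finite and by Proposition~\ref{prop:admissible_strategy} each $\bar{\Phi}^i(z)$ is a compact interval, the constant
\[
 M \;:=\; \max_{i \in \{1,\dots,n+1\}}\;\max_{z \in \zeta^i(E)\cup\{0\}}\;\max_{\varphi \in \bar{\Phi}^i(z)} |\varphi|
\]
is finite. Combining this with \ref{itm:t1}, which gives $\eta^i(e)\geq -1$ $\nu$-a.e.\ and hence the pointwise bound $|\eta^i(e)|\leq 1+(1+\eta^i(e))$, I would define
\[
 f(e) \;:=\; 1 + M\sum_{j=1}^{n+1}\bigl(2+\eta^j(e)\bigr),
\]
so that $1+\varphi\eta^i(e)\le 1+M|\eta^i(e)|\le f(e)$ uniformly in $z\in\zeta^i(E)\cup\{0\}$, $\varphi\in\bar{\Phi}^i(z)$ and $i$. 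Its $L^p(\nu)$-integrability for every $p>0$ is immediate: $\nu$ is finite, so constants lie in every $L^p(\nu)$; each $(1+\eta^j)\in L^p(\nu)$ by \ref{itm:t4}; and finite sums of $L^p$-functions remain in $L^p$ by Minkowski.

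For $g$: Fix an admissible signal-driven peer profile $\pi^{-i}=(\pi^j)_{j\neq i}$. By Lemma~\ref{lemma:unif_bdd} (or, equivalently, by the choice $\bar{\Phi}^j(z)\subset\{0\}\cup\Interior\Phi^j(z)$), each factor $1+\pi^j(\zeta^j(e))\eta^j(e)$ is strictly positive $\nu$-a.e. Applied to these $n$ positive numbers with equal weights $1/n$, the inequality of arithmetic and geometric means yields
\[
 (\overline{1+(\pi\circ\zeta)\eta})^{-i}(e)\;=\;\prod_{j\neq i}\bigl(1+\pi^j(\zeta^j(e))\eta^j(e)\bigr)^{1/n} \;\le\; \frac{1}{n}\sum_{j\neq i}\bigl(1+\pi^j(\zeta^j(e))\eta^j(e)\bigr).
\]
Each summand on the right is dominated by $f(e)$ by the first part (with $\varphi=\pi^j(\zeta^j(e))\in\bar{\Phi}^j(\zeta^j(e))$), so the whole arithmetic mean is bounded by $f(e)$. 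Hence $g:=f$ does the job.

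There is no serious obstacle; everything reduces to combining the uniform position bound coming from \ref{itm:t3} and the admissibility requirement with the moment condition \ref{itm:t4}. The only point worth a moment's care is the positivity of the factors in the geometric mean, without which AM--GM would not apply; this is exactly what the strict positivity of admissible wealth (Proposition~\ref{prop:admissible_strategy}, Lemma~\ref{lemma:unif_bdd}) supplies.
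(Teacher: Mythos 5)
Your proof is correct and follows essentially the same route as the paper: both arguments rest on the uniform position bound coming from \ref{itm:t3} and the compactness of the sets $\bar{\Phi}^i(z)$, combined with the moment condition \ref{itm:t4}. The only cosmetic difference is in the second part, where you recycle $f$ as $g$ via the AM--GM inequality, whereas the paper bounds the geometric mean directly by the maximum of the factors; both steps are equally elementary and yield a majorant independent of $\pi^{-i}$.
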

\begin{proof}
	For the first statement, observe that
        \[ (1+\varphi\eta^i(e)) \leq (1+\max_{x \in \bar{\Phi}^i(z)}|x|)(1+|\eta^i(e)|), \]
    where the right hand-side is independent from $\varphi$ and its $p$th power is $\nu$-integrable by \ref{itm:t4}. Similarly, for the second statement
        \begin{align*}
            (\overline{1 + (\pi\circ \zeta) \eta})^{-i} (e) &= \prod_{j \neq i}  (1+ \pi^j(\zeta^j(e))\eta^j(e))^{\frac{1}{n}} \\
            & \leq \prod_{j \neq i} \big(\max_{z \in \zeta^j(E) \cup \{0\}} (1 + |\pi^j(z)|)^{\frac{1}{n}} \big)(1+|\eta^j(e)|)^{\frac{1}{n}} \\
            &\leq \max_{j\neq i} \max_{z \in \zeta^j(E) \cup \{0\}} \max_{x \in \bar{\Phi}^i(z)} (1+|x|)(1+|\eta^j(e)|)
        \end{align*}
    where the last line is independent from $\pi^{-i}$ and its $p$th power is $\nu$-integrable by \ref{itm:t4}.
\end{proof}

With this result in place, we present the verification theorem for the best response problem and its proof:

\begin{theorem}\label{thm:verification_n}
    Assume that our type assumption~\ref{assumption:admissible_type} holds and suppose investor $i$'s peers use admissible signal-driven strategies $\pi^{-i}=(\pi^j)_{j \in \{1,...,n+1\}\setminus \{i\}}$.
    
    Then the value function of the $i$th investor's best response problem \eqref{eq:investor_optimization_problem} is given by
    \begin{equation}\label{eq:value_function_n_investor}
            v^i(x^i_0,\bar{x}_0,T) = u_{\alpha^i}(x^i_0 (\bar{x}_0)^{-\theta^i})e^{T(1-\alpha^i)(r^i + M_{\pi^{-i}}^i)}
    \end{equation}
    where
    \begin{equation}\label{eq:M_investor}
    \begin{split}
        M^i_{\pi^{-i}} &:=  - \theta^i (\overline{\tau\pi})^{-i} + \tfrac{1}{2} (\theta^i)^2(1-\alpha^i) [ ((\overline{\sigma^0\pi})^{-i})^2 + (\overline{\sigma^2\pi^2})^{-i} ] \\
        & \quad + \sup_{\varphi \in \bar{\Phi}^i(0)} 
        \bigg\{ (\kappa^i-r^i) \varphi - \alpha^i\tfrac{1}{2}  [(\sigma^i)^2+(\sigma^{0i})^2] \varphi^2  - \theta^i (1-\alpha^i)\sigma^{0i} \varphi (\overline{\sigma^0\pi})^{-i} \\
        & \quad \quad \quad \quad \quad  \quad \quad   + \int_{\{ \zeta^i = 0\}} \bigg[ u_i\left(1+\varphi\eta^i(e), (\overline{1 + (\pi\circ \zeta) \eta})^{-i} (e)\right) - u_i(1,1) \bigg] \nu(de)  \bigg\}\\
        &\quad + \int_{\zeta^i(E) \setminus\{0\}}  \sup_{\varphi \in \bar{\Phi}^i(z)} \bigg\{ \int_{\{\zeta^i = z\}} \bigg[ u_i\left(1+\varphi \eta^i(e), (\overline{1 + (\pi\circ \zeta) \eta})^{-i} (e)\right)  - u_i(1,1) \bigg] K^i(z,de)  \bigg\} \mu^i(dz)
    \end{split}
    \end{equation}
    is a finite constant. The function $v^i$ is indeed a smooth solution of the HJB equation \eqref{eq:investor_HJB} and the $i$th investor's best response control is unique and it is again an admissible signal-driven strategy $\phi^{i,\star}:= \phi^{i,\star}_{\pi^{-i}} \in \mathcal{A}_{\text{sig}}^i$: when receiving no signal, investor $i$'s best response to $\pi^{-i}$ is holding in stock a fraction of total wealth
    \begin{equation}\label{eq:optimal_strategy_zero}
    \begin{split}
        \phi^{i,\star}_{\pi^{-i}}(0) &= \argmax_{\varphi \in \bar{\Phi}^i(0)} \bigg\{ (\kappa^i-r^i) \varphi - \alpha^i\tfrac{1}{2}  [(\sigma^i)^2+(\sigma^{0i})^2] \varphi^2  - \theta^i (1-\alpha^i)\sigma^{0i} \varphi (\overline{\sigma^0\pi})^{-i} \\
        & \quad \quad \quad \quad \quad  \quad \quad   + \int_{\{ \zeta^i = 0\}} \bigg[ u_i\left(1+\varphi\eta^i(e), (\overline{1 + (\pi\circ \zeta) \eta})^{-i} (e)\right) - u_i(1,1) \bigg] \nu(de)  \bigg\}
     \end{split}
     \end{equation}
     and, at moments when receiving a signal $z\neq 0$, changes this to a fraction of size
     \begin{equation}\label{eq:optimal_strategy_nonzero}
     	\phi^{i,\star}_{\pi^{-i}}(z) = \argmax_{\varphi \in \bar{\Phi}^i(z)} \bigg\{ \int_{\{\zeta^i = z\}} \bigg[ u_i\left(1+\varphi \eta^i(e), (\overline{1 + (\pi\circ \zeta) \eta})^{-i} (e)\right) - u_i(1,1) \bigg] K^i(z,de) \bigg\}.
    \end{equation}
\end{theorem}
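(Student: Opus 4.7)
The plan is to follow the martingale optimality principle already used to derive \eqref{eq:investor_HJB}: propose an explicit ansatz for $v^i$, check that it solves the HJB, and then verify that the corresponding value process is a supermartingale under any admissible strategy and a true martingale under the proposed best response. Motivated by the terminal condition and by the homogeneity of CRRA utility, the natural ansatz is $v^i(x,\bar{x},T) = u_{\alpha^i}(x\bar{x}^{-\theta^i})\, h^i(T)$ with $h^i(0)=1$. Substituting this into \eqref{eq:investor_HJB} and using the scaling identities $x\partial_x v^i = (1-\alpha^i)v^i$, $\bar{x}\partial_{\bar{x}} v^i = -\theta^i(1-\alpha^i)v^i$ and the analogous formulas for the second derivatives, every spatial term—including the jump integrals, thanks to the multiplicative structure of $u_i(x,\bar x)$ and of $\bar X^{-i}$ in \eqref{eq:averageLevyStructure}—factors as $v^i(x,\bar{x},T)$ times a constant depending only on $\pi^{-i}$, the type $\mathtt{t}^i$, and the control $\varphi$. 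The HJB thereby reduces to a linear ODE in $T$ whose unique solution with $h^i(0)=1$ is the exponential factor appearing in \eqref{eq:value_function_n_investor}, provided $M^i_{\pi^{-i}}$ is defined exactly as in \eqref{eq:M_investor}.

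Next, I would establish that $M^i_{\pi^{-i}}$ is finite and that each of its two suprema is attained by a unique maximizer. Finiteness of the jump integrals follows from Lemma \ref{lemma:majorante}, which provides $\nu$-integrable domination independent of $\varphi$ and of $\pi^{-i}$, combined with the integrability condition \ref{itm:t4}. For uniqueness, the integrand
\[
\varphi \,\longmapsto\, u_i\bigl(1+\varphi\eta^i(e),\,(\overline{1+(\pi\circ\zeta)\eta})^{-i}(e)\bigr)
\]
is strictly concave in $\varphi$ wherever $\eta^i(e)\neq 0$, since $x\mapsto u_{\alpha^i}(x)$ is strictly concave on $(0,\infty)$; non-degeneracy of $\eta^i$ on $\{\zeta^i=z\}$ is guaranteed by \ref{itm:t2}. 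Together with the strictly concave quadratic penalty $-\tfrac{\alpha^i}{2}[(\sigma^i)^2+(\sigma^{0i})^2]\varphi^2$ in the $z=0$ case and the compactness of $\bar{\Phi}^i(z)\subset \{0\}\cup \Interior \Phi^i(z)$, this yields a unique $\phi^{i,\star}_{\pi^{-i}}(z)$ for each $z\in \zeta^i(E)\cup\{0\}$. Because \ref{itm:t3} makes $\zeta^i(E)$ finite, collecting these maximizers produces a Borel map, and Lemma \ref{lemma:unif_bdd} confirms that the induced signal-driven strategy lies in $\mathcal{A}^i_{\text{sig}}$.

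For the verification step I would apply Itô's formula to $V^i_t := v^i(X^i_t,\bar{X}^{-i}_t,T-t)$ along the computation already displayed before the theorem. Since $v^i$ solves \eqref{eq:investor_HJB}, the finite-variation part of $V^i$ is non-positive for any admissible $\tilde{\phi}^i=\hat{\phi}^i(Z^i)\in \mathcal{A}^i$ and vanishes identically when $\hat{\phi}^i = \phi^{i,\star}_{\pi^{-i}}$. The remaining local-martingale part is shown to be a true martingale: the Brownian integrals are square-integrable because $\hat{\phi}^i$ takes values in the compact sets $\bar{\Phi}^i(z)$ while both $X^i$ and $\bar{X}^{-i}$ lie in every $L^p$ by \eqref{eq:averageLevyStructure}, Lemma \ref{lemma:unif_bdd} and \ref{itm:t4}, and the compensated Poisson integral is dominated via Lemma \ref{lemma:majorante}. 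Taking expectations at time $T$ then yields $\mathbb{E}[u_i(X^i_T,\bar{X}^{-i}_T)]\le v^i(x^i_0,\bar{x}_0,T)$ with equality at $\phi^{i,\star}_{\pi^{-i}}$, and uniqueness of the optimizer follows from the strict concavity already established.

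The main obstacle I anticipate is precisely this last step: ensuring that the compensated Poisson integral piece is a genuine, not merely local, martingale. Its integrand involves the product $X^i_{s-}\bar{X}^{-i}_{s-}$ together with the post-jump multiplier $(\overline{1+(\pi\circ\zeta)\eta})^{-i}(e)-1$, and these quantities can grow stochastically through many jumps involving all $n+1$ investors. The combination of Lemma \ref{lemma:unif_bdd} (a uniform lower bound on admissible investment returns, giving $L^p$ control of $X^i$ and $\bar X^{-i}$) and Lemma \ref{lemma:majorante} (a uniform $\nu$-integrable upper bound on the jump multipliers, available because \ref{itm:t4} holds for every $p>0$) is precisely what is needed to upgrade the local-martingale property to a genuine one and thereby close the verification argument.
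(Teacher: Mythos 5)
Your proposal is correct and follows essentially the same route as the paper: the homogeneous ansatz reducing the HJB to an ODE in $T$, finiteness of $M^i_{\pi^{-i}}$ via Lemma~\ref{lemma:majorante} and \ref{itm:t4}, uniqueness and signal-drivenness of the maximizers by strict concavity over the compact sets $\bar{\Phi}^i(z)$, and a supermartingale/martingale verification. The only (harmless) divergence is technical: where you upgrade the local-martingale parts to true martingales directly via $L^p$-moment bounds on $X^i$ and $\bar{X}^{-i}$, the paper instead shifts the value process to $v^i(X^i_t+\varepsilon,\bar{X}^{-i}_t,T-t)$, deduces the supermartingale property from an integrable lower bound built on $\mathbb{E}[\sup_{t}|\bar{X}^{-i}_t|^p]<\infty$, sends $\varepsilon\downarrow 0$, and for the optimal strategy checks martingality of the resulting product of stochastic exponentials via a square-integrability criterion --- both devices resting on the same Lemmas~\ref{lemma:unif_bdd} and~\ref{lemma:majorante}.
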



\begin{proof}(cf. \cite[proof of Theorem 3.1]{BK22})
    Let us first argue that $M_{\pi^{-i}}^i \in \mathbb{R}$. For this, as by \ref{itm:t3} $\mu^i$ is a finite measure, we only need to show finiteness of the suprema in \eqref{eq:M_investor}, for which we take a closer look at its integrands. Observe that for all $z \in \zeta^i(E) \cup \{0\}$, for all $\varphi \in \bar{\Phi}^i(z)$ we have 
    \begin{equation}\label{eq:majorante}
        \left| u_i\left(1+\varphi \eta^i(e), (\overline{1 + (\pi\circ \zeta) \eta})^{-i} (e)\right) \right| \leq \frac{(1+\varphi \eta^i(e))^{1-\alpha^i}}{|1-\alpha^i|} \big((\overline{1 + (\pi\circ \zeta) \eta})^{-i}(e)\big)^{-\theta^i(1-\alpha^i)}.
    \end{equation}
    Thus, both for $\alpha^i \in (0,1)$ and $ \alpha^i \in (1,\infty)$, the right-hand side exhibits a denominator which is uniformly bounded away from zero by Lemma \ref{lemma:unif_bdd}, and a numerator which is bounded from above by a $\nu$-integrable function, which does not depend on $\varphi$, by Lemma \ref{lemma:majorante}. Hence, both suprema in \eqref{eq:M_investor} are taken for continuous functions over compacts and thus are attained and finite.
    
    Now, a simple calculation shows that $v^i(x,\bar{x},T)$ as defined in \eqref{eq:value_function_n_investor} is a $C^{2,2,1}((0,\infty)\times (0,\infty)\times[0,T])$-solution of \eqref{eq:investor_HJB} and meets the initial condition $v^i(x,\bar{x},0)=u_{\alpha^i}(x \bar{x}^{-\theta^i})$ for all $(x,\bar{x}) \in (0,\infty)^2$. Note that $v$  satisfies $\lim_{\varepsilon \downarrow 0} v^i(\varepsilon,\bar{x},T-t) = v^i(0+,\bar{x},T-t) = u_i(0,\bar{x})$ for all $ t\in [0,T]$, $\bar{x} \in (0,\infty)$, by our convention \eqref{eq:utility_function_limits}.

    Let us argue that, for $\varepsilon>0$ fixed, $V^{i,\varepsilon}_t:= v^i(X^i_t+\varepsilon,\bar{X}^{-i}_t,T-t)$, $t \in [0,T]$, is a supermartingale where $X^i$ solves the wealth dynamics \eqref{eq:wealth_dynamics} for some $\hat{\phi}^i \in \mathcal{A}^i$. For this, we show that  $V^{i,\varepsilon}$ is bounded from below, i.e. $V^{i,\varepsilon}\geq - C_{\varepsilon,\pi^{-i}}$ for some integrable $C_{\varepsilon,\pi^{-i}} > 0$. Indeed, for $\alpha^i \in (0,1)$, the utility is bounded from below by zero. For $\alpha^i >1$, however, we ensure a uniform bound by showing that
    \begin{equation}\label{eq:unif_bdd_mean_wealth}
        \mathbb{E}[\sup_{t \in [0,T]} |\bar{X}^{-i}_t|^p ]< \infty
    \end{equation}
    for all $p>0$. In fact, the mean wealth process $\bar{X}^{-i}$ is the product of independent stochastic exponentials and strictly positive and can be written in closed form as
    \begin{equation}
    \begin{split}
        \bar{X}^{-i}_t = \bar{x}_0 \exp\bigg( (\overline{\tau\pi})^{-i} t &+ \frac{1}{n} \sum_{j \neq i}  \sigma^j \pi^j(0) W^j_t + (\overline{\sigma^0\pi})^{-i} W^0_t \\
                &+  \int_{[0,t] \times E}  \log((\overline{1 + (\pi\circ \zeta) \eta})^{-i} (e) )  N(ds,de)  \bigg)
    \end{split}
    \end{equation}
    for $t \in [0,T]$. Let $p>0$ and let us argue for one component after another: First, note that 
        \begin{equation*}
            \sup_{t \in [0,T]}\exp\big( p (\overline{\tau\pi})^{-i} t \big)
        \end{equation*}
    does not involve any randomness and is trivially finite. Secondly, as the maximum Brownian motion has finite exponential moments, it is easy to see that
        \begin{align*}
            \mathbb{E}\bigg[\sup_{t \in [0,T]} \exp \bigg(p \cdot \frac{1}{n} \sum_{j \neq i}  \sigma^j \pi^j(0) W^j_t \bigg) \bigg] < \infty \quad \text{ and } \quad  \mathbb{E}\bigg[\sup_{t \in [0,T]} \exp \big( p \cdot (\overline{\sigma^0\pi})^{-i} W^0_t \big)\bigg] < \infty.
        \end{align*}
    Hence, we are left to observe that
        \begin{align*}
            \mathbb{E}\bigg[\sup_{t \in [0,T]} \exp\bigg\{ \int_{[0,t] \times E} &p \log((\overline{1 + (\pi\circ \zeta) \eta})^{-i} (e) )  N(ds,de)\bigg\}\bigg] \\
            &\leq \mathbb{E}\bigg[\sup_{t \in [0,T]} \int_{[0,t] \times E} ((\overline{1 + (\pi\circ \zeta) \eta})^{-i}) (e) )^p  N(ds,de)\bigg]  < \infty,
        \end{align*}
    but, since the integral against the Poisson random measure admits almost surely finitely many jumps on any finite time interval, we only need to check whether the jumps have finite expectation with respect to $\nu$ and this immediately follows from Lemma \ref{lemma:majorante}.
    
    Thus, \eqref{eq:unif_bdd_mean_wealth} holds and together with $V^{i,\varepsilon}$ having nonnegative drift, which follows from our arguments leading to the HJB equation, this implies that $V^{i,\varepsilon}$ is a supermartingale. In particular, this allows us to observe
    \begin{align*}
        \mathbb{E}[u_i(X^i_T,\bar{X}^{-i}_T) ] \leq \mathbb{E}[ V^{i,\varepsilon}_T ] \leq  \mathbb{E}[ V^{i,\varepsilon}_0 ] = v^i(x^i_0+\varepsilon,\bar{x}_0,T).
        \end{align*}
    By sending $\varepsilon \downarrow 0$, we even have
        \[ \mathbb{E}[u_i(X^i_T,\bar{X}^{-i}_T) ] \leq v^i(x^i_0,\bar{x}_0,T).
        \]
    So the function on the right-hand side of \eqref{eq:value_function_n_investor} yields an upper bound for the value of investor $i$'s best response problem \eqref{eq:investor_optimization_problem}.

    It remains to show that $V^{i,\star}_t := v^i(X^{i,\star}_t,\bar{X}^{-i}_t,T-t)$ is a martingale for the wealth process $X^{i,\star}:=X^{\phi^{i,\star}}$ given the strategy $\phi^{i,\star} = \phi^{i,\star}_{\pi^{-i}}$. Then, this strategy is optimal since we can conclude
        \begin{align*}
            v^i(x^i_0,\bar{x}_0,T) &= \mathbb{E}[u_i(X^{i,\star}_T,\bar{X}^{-i}_T) ]\leq \sup_\phi \mathbb{E}[u_i(X^{i}_T,\bar{X}^{-i}_T)] = v^i(x^i_0,\bar{x}_0,T).
        \end{align*}

    We note that $\phi^{i,\star}$ is signal-driven (and admissible) in the sense of Definition \ref{defn:signal-driven_strategy} and that the use of $=$ instead of $\in$ is justified since the $\argmax$ in both \eqref{eq:optimal_strategy_zero} and \eqref{eq:optimal_strategy_nonzero} are singletons. This follows immediately from the fact that the target functions are strictly concave in $\varphi$.

    Further, our arguments leading to the HJB equation \eqref{eq:investor_HJB}, show that $V^{i,\star}$ has zero drift:
        \begin{align*}
            dV^{i,\star}_t = V^{i,\star}_{t-}(1-\alpha^i)&\bigg[ \sigma^i \phi^{i,\star}(0) dW^i_t + \sigma^{0i} \phi^{i,\star}(0) dW^0_t + \frac{1}{n} \sum_{j \neq i}  \sigma^j \pi^j(0) dW^j_t +  (\overline{\sigma^0\pi})^{-i}dW^0_t \\
            & \quad + \int_E \bigg[ u_i\big(1+\phi^{i,\star}(\zeta^i(e))\eta^i(e), (\overline{1 + (\pi\circ \zeta) \eta})^{-i} (e)\big) - u_i(1,1) \bigg]\bar{N}(dt,de)  \bigg].
        \end{align*}
    In particular, $V^{i,\star}$ is the product of independent exponentials.
    The stochastic exponentials with respect to the Brownian motions are martingales, since $\sigma^i,\sigma^{0i},\sigma^j,\sigma^{0j}$, $(1-\alpha^i)$, $\pi^j(0)$ and $\phi^{i,\star}_\pi(0)$ are constants, and the exponential with respect to the compensated compound Poisson process is a martingale, because (cf. \cite[Theorem 33.2]{Sato})
        \begin{align*}
            &\int_E \big[ (1+\phi^{i,\star}(\zeta^i(e)) \eta^i(e))^{\frac{1-\alpha^i}{2}} ((\overline{1 + (\pi\circ \zeta) \eta})^{-i} (e))^{\frac{-\theta^i(1-\alpha^i)}{2n}} - 1 \big]^2 \nu(de)\\
            & \quad \quad \leq\int_E \big[ (1+\phi^{i,\star}(\zeta^i(e)) \eta^i(e))^{1-\alpha^i} ((\overline{1 + (\pi\circ \zeta) \eta})^{-i} (e))^{\frac{-\theta^i(1-\alpha^i)}{n}} + 1 \big] \nu(de)
        \end{align*}
    is ensured to be finite by $M_{\pi^{-i}}^i$ being finite. Thus, $V^{i,\star}$ is a martingale and the proof is complete.
\end{proof}

\subsection{Existence of a Nash Equilibrium}\label{sec:nash_equilibrium} Having found the best response for a single investor interacting with peers employing signal-driven strategies, we are now in a position to demonstrate the existence of an equilibrium.

\begin{theorem}\label{thm:nash_equilibrium}
    Under Assumption~\ref{assumption:admissible_type}, there exists a signal-driven Nash equilibrium $(\pi^{1,\star},....,\pi^{n+1,\star}) \in \mathcal{A}_{\text{sig}}^1\times ... \times \mathcal{A}_{\text{sig}}^{n+1}$.
\end{theorem}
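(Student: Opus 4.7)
The plan is to apply a fixed-point theorem to the best-response map built from Theorem~\ref{thm:verification_n}. Condition~\ref{itm:t3} makes $\zeta^i(E)\cup\{0\}$ a finite set, so an admissible signal-driven strategy $\phi^i\in\mathcal{A}^i_{\text{sig}}$ is uniquely determined by its values $(\phi^i(z))_{z\in\zeta^i(E)\cup\{0\}}$ in the compact intervals $\bar\Phi^i(z)$. Hence $\mathcal{A}^i_{\text{sig}}$ is naturally identified with the compact convex set $\prod_{z\in\zeta^i(E)\cup\{0\}}\bar\Phi^i(z)$ in a Euclidean space, and the product $\mathcal{K}:=\prod_{i=1}^{n+1}\mathcal{A}^i_{\text{sig}}$ is a compact convex subset of a finite-dimensional Euclidean space. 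This is the arena in which Brouwer's theorem (the finite-dimensional instance of Schauder's) will be applied.

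I would define the best-response map $\Psi:\mathcal{K}\to\mathcal{K}$ componentwise by $\Psi(\pi^1,\ldots,\pi^{n+1})_i:=\phi^{i,\star}_{\pi^{-i}}$, where $\phi^{i,\star}_{\pi^{-i}}$ is the unique admissible signal-driven best response to $\pi^{-i}$ furnished by Theorem~\ref{thm:verification_n}. A signal-driven Nash equilibrium is then, by definition, precisely a fixed point of $\Psi$, so it remains to produce one by verifying the hypotheses of Brouwer.

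The core step is continuity of $\Psi$. Fix $i$ and $z\in\zeta^i(E)\cup\{0\}$, and consider the objective functions appearing in \eqref{eq:optimal_strategy_zero} and \eqref{eq:optimal_strategy_nonzero}. The purely finite-dimensional terms (involving $(\overline{\tau\pi})^{-i}$, $(\overline{\sigma^0\pi})^{-i}$ and $(\overline{\sigma^2\pi^2})^{-i}$) are polynomial in the coordinates of $\pi^{-i}$, hence continuous. For the integral terms, the integrand $u_i(1+\varphi\eta^i(e),(\overline{1+(\pi\circ\zeta)\eta})^{-i}(e))$ is, for each $e$, continuous in $(\varphi,\pi^{-i})$, and, by combining Lemma~\ref{lemma:unif_bdd} (uniform strict-positive lower bound on the arguments of $u_i$) with Lemma~\ref{lemma:majorante} (uniform $L^p(\nu)$ envelope on $(\overline{1+(\pi\circ\zeta)\eta})^{-i}$) and the bound~\eqref{eq:majorante}, it is dominated by a $\nu$-integrable function that depends neither on $\varphi\in\bar\Phi^i(z)$ nor on $\pi^{-i}\in\mathcal{K}$. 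Dominated convergence then yields joint continuity on the compact set $\bar\Phi^i(z)\times\mathcal{K}$. Since the objectives are strictly concave in $\varphi$, for $z=0$ thanks to the quadratic term $-\tfrac12\alpha^i[(\sigma^i)^2+(\sigma^{0i})^2]\varphi^2$ together with $\sigma^i+\sigma^{0i}>0$, and for $z\neq 0$ thanks to strict concavity of $u_i$ in its first argument combined with the no-arbitrage condition~\ref{itm:t2} (which forces $\eta^i$ to be not $K^i(z,\cdot)$-almost surely zero), the argmax is a singleton and, as the feasible set $\bar\Phi^i(z)$ does not depend on $\pi^{-i}$, Berge's Maximum Theorem gives continuous dependence on $\pi^{-i}$. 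Hence $\Psi:\mathcal{K}\to\mathcal{K}$ is continuous.

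Brouwer's fixed-point theorem then produces a fixed point $(\pi^{1,\star},\ldots,\pi^{n+1,\star})\in\mathcal{K}$, which by construction is a signal-driven Nash equilibrium in the sense of Definition~\ref{defn:signaldrivennashequilibrium}. The main obstacle, and the essential reason for imposing \ref{itm:t3} and \ref{itm:t4}, is exactly the joint-continuity step: the finiteness of $\zeta^i(E)$ is what reduces $\mathcal{K}$ to a finite-dimensional compact convex set with neat interval constraints, and \ref{itm:t4} supplies the $L^p(\nu)$-envelope powering the dominated-convergence argument that feeds into Berge's theorem.
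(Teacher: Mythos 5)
Your proposal is correct and follows essentially the same route as the paper: identifying $\mathcal{A}^1_{\text{sig}}\times\dots\times\mathcal{A}^{n+1}_{\text{sig}}$ with a finite-dimensional compact convex product of intervals via \ref{itm:t3}, establishing continuity of the best-response map through the dominated-convergence argument powered by Lemma~\ref{lemma:majorante} and Berge's Maximum Theorem with singleton argmaxes from strict concavity, and then invoking a fixed-point theorem (the paper cites Schauder where you cite Brouwer, but in this finite-dimensional setting these coincide).
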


The proof is based on Schauder's Fixed Point Theorem which requires a continuous self-mapping of a compact and convex domain. For our best response map above \eqref{eq:optimal_strategy_zero} and \eqref{eq:optimal_strategy_nonzero}, these requirements are difficult to meet in general, but our Assumption~\ref{itm:t3} makes this tractable: There are only finitely many distinct signals to account for and so the space of admissible signal-driven strategies $(\pi^{1},....,\pi^{n+1}) \in \mathcal{A}_{\text{sig}}^1\times ... \times \mathcal{A}_{\text{sig}}^{n+1}$ can be identified with a vector 
	\[ (\pi^1,...,\pi^{n+1}) \in \bigtimes_{i=1}^{n+1}\bigtimes_{z \in \zeta^i(E)\cup\{0\}} \bar{\Phi}^i(z) =: \bar{\Phi}.\]
The set $\bar{\Phi}$ is compact and convex by construction and thus serves as the domain and image set of the desired fixed point mapping. 

\begin{proof}
     A sufficient condition for a candidate $\pi = (\pi^1,...,\pi^{n+1}) \in \mathcal{A}_{\text{sig}}^1 \times ... \times \mathcal{A}_{\text{sig}}^{n+1}$ to be a signal-driven Nash equilibrium is to satisfy $\pi^i = \phi^{i,\star}_{\pi^{-i}}  \text{ for all } i\in \{1,...,n+1\}$, where $\phi^{i,\star}_{\pi^{-i}}$ is described in Theorem \ref{thm:verification_n}. 
     Thus, the equilibrium is characterized by a fixed point of
	\begin{align*}
		\gamma : \bar{\Phi} \to \bar{\Phi}, \quad \pi = (\pi^1,...,\pi^{n+1}) \mapsto \gamma(\pi) := (\gamma(\pi)^1,...,\gamma(\pi)^{n+1}) := ( \phi^{1,\star}_{\pi^{-1}},...,\phi^{n+1,\star}_{\pi^{-(n+1)}} ).
	\end{align*}
   Let us argue for continuity of $\gamma$. By Lemma \ref{lemma:majorante}, we can use dominated convergence to find that the target functions in both \eqref{eq:optimal_strategy_zero} and \eqref{eq:optimal_strategy_nonzero} are continuous in $(\varphi,\pi)$. Thus, by Berge's Maximum Theorem (cf. \cite[Theorem 17.31]{Aliprantis2006}), $\phi^{i,\star}_{\pi^{-i}}$ is upper-hemicontinuous in $\pi$. In fact, since by strict concavity of our utility functions the $\argmax$ operators in~\eqref{eq:optimal_strategy_zero} and~\eqref{eq:optimal_strategy_nonzero} always yield singletons, the maximizers are actually continuous functions. As a consequence, $\gamma$ is indeed continuous in $\pi$.

    Finally, since $\bar{\Phi}$ is non-empty, compact and convex and $\gamma: \bar{\Phi} \to \bar{\Phi}$ is continuous, we can use Schauder's Fixed Point Theorem, cf.\ \cite{Schauder1930}, to conclude that there exists $\pi^\star \in \bar{\Phi}$ with $\pi^\star=\gamma(\pi^\star)$.
\end{proof}

In summary, we have demonstrated the existence of a signal-driven Nash equilibrium, which also qualifies as a Nash equilibrium within the broader class of admissible (not necessarily signal-driven) strategies. Let us emphasize that we neither claim uniqueness of such equilibria nor can we rule out existence of Nash equilibria involving strategies which are not signal-driven.

\subsection{Common vs.\ Idiosyncratic Noise in Aggregate Wealth}\label{sec:limit}

An important question arises: how can this framework transition to a corresponding mean field game? This transition is particularly motivated by the fact that, in such a mean field setting a single individual’s actions do not impact the environment of others and, so, the signal-driven strategies that we are focussing on are an even more natural concept as the average wealth at the core of the interactions between agents cannot be influenced anymore by every single investor. One way to interpret the mean field setting is to consider it as the limit of the multi-agent game for the number of agents $n \to \infty$.

Before moving on to a rigorous formulation of the mean field game in Section \ref{sec:mean_field_game} below, we provide an appetizer discussion and deliver a result on the limit of the average wealth for future reference.\\

Let us assume that the types of our investors $j=1,2,\dots$ are obtained from a corresponding sequence $\mathtt{t}^1$, $\mathtt{t}^2$,... in $\mathtt{T}$ generated by independently and identically distributed samples from a probability $\mathbb{T} = \mathbb{T}(d\mathtt{t})$ which describes the distribution of types among the population of investors. We denote a generic sample from $\mathbb{T}$ by 
    \[ \mathtt{t} = (x_0^\mathtt{t},r^\mathtt{t},\kappa^\mathtt{t}, \sigma^\mathtt{t}, \sigma^{0\mathtt{t}},\eta^\mathtt{t},\zeta^\mathtt{t},\alpha^\mathtt{t},\theta^\mathtt{t}) \in \mathtt{T} \]
and with some abuse of notation consider
    \[ \mathtt{t}^j = (x_0^j,r^j,\kappa^j, \sigma^j, \sigma^{0j},\eta^j,\zeta^j,\alpha^j,\theta^j) = (x_0^{\mathtt{t}^j},r^{\mathtt{t}^j},\kappa^{\mathtt{t}^j}, \sigma^{\mathtt{t}^j}, \sigma^{0\mathtt{t}^j},\eta^{\mathtt{t}^j},\zeta^{\mathtt{t}^j},\alpha^{\mathtt{t}^j},\theta^{\mathtt{t}^j}) \in \mathtt{T} \quad \text{ for } j \in \mathbb{N}. \]
Each investor $j=1,2,\dots$ is equipped with her own independent idiosyncratic Brownian motion $W^j$; the Brownian motion $W^0$ specifies common Brownian noise shared by all agents.

The specification of common and idiosyncratic jump noise is less obvious. A particular challenge in this context is the specification of a framework which allows for idiosyncratic signals on (partially) common price shocks, a key feature of our model, which, to the best of our knowledge, has not been addressed in the literature so far. Indeed, while jump times of the Poisson random measure $N(dt,de)$ should be common to all investors, it is important to address the question how the marks set by $N$ represent common and  idiosyncratic noise. For instance, we want to allow for the possibility that each of our investors receives a signal on common impending price shocks which is affected by idiosyncratic noise. So, an investor may have a type $\mathtt{t} \in \mathtt{T}$ that affords her warnings about impending price shocks with some probability $p^\mathtt{t}$. While the price shock may be common to all investors and the warning probability $p^\mathtt{t}$ will be shared by her fellow investors of type $\mathtt{t}$, the actual heads-or-tails outcome of her idiosyncratic coin flip experiment for the shock warning may be independent from that of everyone else. The distinction between common and idiosyncratic noise becomes particularly crucial in the mean field framework. Here, idiosyncratic noise can be averaged out by a law of large numbers to yield a deterministic quantity while common noise persists as a random quantity affecting everyone. 

To allow for a rigorous formalization of such settings with idiosyncratic and common jump noise, let us consider the mark space as an infinite product space
    \[ E := E^{\mathtt{c}} \times E^{\otimes\mathtt{i}} \text{ for } E^{\otimes\mathtt{i}} := \bigtimes_{j \in \mathbb{N}} E^{\mathtt{i},j} \]
for Polish spaces $E^{\mathtt{c}}$ and identical $E^{\mathtt{i},1}=E^{\mathtt{i},2}=\dots=:E^{\mathtt{i}}$ with generic elements $$e=(e^{\mathtt{c}},e^{\mathtt{i}}) = (e^{\mathtt{c}},e^{\mathtt{i},1},e^{\mathtt{i},2},...) \in E.$$ The independent Poisson random measure $N=N(dt,de) =N(dt,de^{\mathtt{c}},de^{\mathtt{i}})$ with jump rate $\lambda \in (0,\infty)$ has intensity measure
        \[ \nu := \lambda (\nu^{\mathtt{c}} \otimes \nu^{\otimes\mathtt{i}}), \]
where $\nu^{\mathtt{c}}$ is a probability measure on $E^{\mathtt{c}}$ and
        \[ \nu^{\otimes\mathtt{i}}(de^{\mathtt{i}}) := \bigotimes_{j \in \mathbb{N}} \nu^{\mathtt{i}}(de^{\mathtt{i},j}) \]
generates independently for each investor identically distributed  idiosyncratic noise marks, each with law $\nu^{\mathtt{i}}$ on $E^{\mathtt{i}}$. In particular, this leads to jumps and signals with both common and idiosyncratic components for each investor $j=1,2,\dots$ via $\eta^{j}(e^{\mathtt{c}},e^{\mathtt{i}})= \eta^{\mathtt{t}^j}(e^{\mathtt{c}},e^{\mathtt{i}})=\eta^{\mathtt{t}^j}(e^{\mathtt{c}},e^{\mathtt{i},j})$ and $\zeta^{j}(e^{\mathtt{c}},e^{\mathtt{i}})=\zeta^{\mathtt{t}^j}(e^{\mathtt{c}},e^{\mathtt{i}})=\zeta^j(e^{\mathtt{c}},e^{\mathtt{i},j})$ where $\eta^j$ and $\zeta^j$ are the jump and signal maps on $E^{\mathtt{c}}\times E^{\mathtt{i}}$ for an investor of type $\mathtt{t}^j \in \mathtt{T}$.

Of course, the actual signals $Z_{t_k}^j=\zeta^{\mathtt{t}^j}(e^{\mathtt{c}}_k,e^{\mathtt{i},j}_k)$ that each individual investor $j=1,2,\dots$ may, at the various mark times $t_k$, receive are exposed to both the respective common mark $e^{\mathtt{c}}_k$ and the investor specific mark $e^{\mathtt{i},j}_k$.  As a result, even investors of the same type may make differing investment decisions and figuring out the limiting geometric average wealth when their number $n$ goes to infinity has to account for this:
\begin{lemma}\label{lemma:limit}
    Suppose each investor $j=1,2,\dots$ follows a signal-driven strategy $\pi^j=\pi^{\mathtt{t}^j} \in \mathcal{A}^j_{\text{sig}}$ determined solely by her respective type $\mathtt{t}^j$, determined as i.i.d.\ sample from the type distribution $\mathbb{T}(d\mathtt{t})$.
    Moreover, suppose the following integrability assumptions hold: 
    \begin{equation*}
        \begin{split}
            &\int_{\mathtt{T}} |\log (x_0^\mathtt{t} )| \mathbb{T}(d\mathtt{t}) < \infty, \\
            &\int_{\mathtt{T}}\big| r^\mathtt{t}+\pi^\mathtt{t}(0)(\kappa^\mathtt{t}-r^\mathtt{t}) - \frac{1}{2} ((\sigma^\mathtt{t})^2+(\sigma^{0\mathtt{t}})^2)(\pi^\mathtt{t}(0))^2 \big| \mathbb{T}(d\mathtt{t}) < \infty,\\
            &\int_{\mathtt{T}} |\sigma^{0 \mathtt{t}}\pi^\mathtt{t}(0)| \mathbb{T}(d\mathtt{t}) < \infty
        \end{split}
    \end{equation*}
    and for $e^\mathtt{c} \in E^\mathtt{c}$
    \begin{equation*}
		 \int_{\mathtt{T}\times E^{\mathtt{i}}} \log\big( 1+ \pi^{\mathtt{t}}(\zeta^{\mathtt{t}}(e^{\mathtt{c}},e^{\mathtt{i}})) \eta^{\mathtt{t}}(e^{\mathtt{c}},e^{\mathtt{i}}) \big) \nu^{\mathtt{i}}(de^{\mathtt{i}}) \otimes \mathbb{T}(d\mathtt{t})< \infty.
	\end{equation*}
    Then, as the number of investors $n$ converges to $\infty$, the average wealth $\bar{X}^{-i}_t$ of each investor's $i \in \mathbb{N}$ peers converges for all $t \in [0,T]$ almost surely to
        \begin{align*}
            \lim_{n \to \infty} \bar{X}^{-i}_t = \bar{x}_0 \exp &\bigg\{ \overline{\mathtt{t}\pi} \ t +\overline{\sigma^0 \pi} \  W^0_t  + \int_{[0,t] \times E^{\mathtt{c}}} \log(\overline{1+ (\pi \circ \zeta)\eta}(e^{\mathtt{c}})) N(ds,de^{\mathtt{c}},E^{\mathtt{i}})\bigg\}
        \end{align*}
    for
    \begin{equation}\label{eq:mean_components}
        \begin{split}
            \bar{x}_0 &:= \exp \int_{\mathtt{T}} \log (x_0^\mathtt{t} ) \mathbb{T}(d\mathtt{t}) \\
            \overline{\tau\pi} &:= \int_{\mathtt{T}}\left( r^\mathtt{t}+\pi^\mathtt{t}(0)(\kappa^\mathtt{t}-r^\mathtt{t}) - \frac{1}{2} ((\sigma^\mathtt{t})^2+(\sigma^{0\mathtt{t}})^2)(\pi^\mathtt{t}(0))^2 \right)\mathbb{T}(d\mathtt{t}) \\
            \overline{\sigma^0 \pi} &:= \int_{\mathtt{T}} \sigma^{0 \mathtt{t}}\pi^\mathtt{t}(0) \mathbb{T}(d\mathtt{t})
        \end{split}
        \end{equation}
    and
    \begin{equation}\label{eq:mean_jumps}
		\overline{1+ (\pi \circ \zeta)\eta}(e^{\mathtt{c}}) := \exp \left\{ \int_{\mathtt{T}\times E^{\mathtt{i}}} \log\big( 1+ \pi^{\mathtt{t}}(\zeta^{\mathtt{t}}(e^{\mathtt{c}},e^{\mathtt{i}})) \eta^{\mathtt{t}}(e^{\mathtt{c}},e^{\mathtt{i}}) \big) \nu^{\mathtt{i}}(de^{\mathtt{i}}) \otimes \mathbb{T}(d\mathtt{t}) \right\}, \quad  e^\mathtt{c} \in E^\mathtt{c}.
	\end{equation}
\end{lemma}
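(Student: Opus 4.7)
The plan is to take the logarithm of the explicit exponential--Lévy representation~\eqref{eq:averageLevyStructure} and pass each of its five components to the $n\to\infty$ limit separately. Since each $\pi^j=\pi^{\mathtt{t}^j}$ depends on $j$ only through the i.i.d.\ type $\mathtt{t}^j$, the sums $\tfrac{1}{n}\sum_{j\neq i}\log x_0^j$, $(\overline{\tau\pi})^{-i}$, and $(\overline{\sigma^0\pi})^{-i}$ are empirical means of i.i.d.\ real-valued random variables, and the first three integrability assumptions allow Kolmogorov's strong law of large numbers (SLLN) to yield their almost sure convergence to $\log\bar{x}_0$, $\overline{\tau\pi}$, and $\overline{\sigma^0\pi}$ from~\eqref{eq:mean_components}. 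Multiplying the last of these by the fixed $W^0_t$ passes the common Brownian term through; the idiosyncratic Brownian sum $\tfrac{1}{n}\sum_{j\neq i}\sigma^j\pi^j(0)W^j_t$ is itself an empirical mean of i.i.d.\ centered summands (the $W^j_t$ being independent of the types and of each other) and thus vanishes a.s., either again by Kolmogorov's SLLN or by a variance estimate combined with Borel--Cantelli after conditioning on the types.

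The main work lies in the jump integral. Since $\nu(E)<\infty$ and $T<\infty$, almost surely only finitely many marks $(t_k,e_k)_{k=1,\ldots,K}$ of $N$ fall into $[0,t]\times E$, and each has the product form $e_k=(e^{\mathtt{c}}_k,e^{\mathtt{i},1}_k,e^{\mathtt{i},2}_k,\ldots)$. By the assumed factorization $\nu=\lambda(\nu^{\mathtt{c}}\otimes\nu^{\otimes\mathtt{i}})$, the idiosyncratic marks $(e^{\mathtt{i},j}_k)_j$ are, conditionally on $e^{\mathtt{c}}_k$, i.i.d.\ with law $\nu^{\mathtt{i}}$ and independent of the type sample $(\mathtt{t}^j)_j$. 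Hence for each fixed $k$, the summands
\begin{equation*}
\log\bigl(1+\pi^{\mathtt{t}^j}(\zeta^{\mathtt{t}^j}(e^{\mathtt{c}}_k,e^{\mathtt{i},j}_k))\eta^{\mathtt{t}^j}(e^{\mathtt{c}}_k,e^{\mathtt{i},j}_k)\bigr),\qquad j\neq i,
\end{equation*}
are i.i.d.\ in $j$ conditionally on $e^{\mathtt{c}}_k$ with joint law $\mathbb{T}(d\mathtt{t})\otimes\nu^{\mathtt{i}}(de^{\mathtt{i}})$. The fourth integrability hypothesis then lets a conditional Kolmogorov SLLN drive the inner $\tfrac{1}{n}\sum_{j\neq i}$ average almost surely to $\log\overline{1+(\pi\circ\zeta)\eta}(e^{\mathtt{c}}_k)$ as in~\eqref{eq:mean_jumps}. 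Summing over the $K$ jumps rewrites the limiting integral as $\int_{[0,t]\times E^{\mathtt{c}}}\log\overline{1+(\pi\circ\zeta)\eta}(e^{\mathtt{c}})\,N(ds,de^{\mathtt{c}},E^{\mathtt{i}})$, and assembling all five a.s.\ limits inside the exponential yields the claimed identity.

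The principal obstacle is exactly this jump integral, where one must simultaneously exploit the finiteness of $\nu$ to reduce $N$ almost surely to a sum over an $n$-independent number of atoms, and carefully disentangle the common and idiosyncratic components of each mark so that Kolmogorov's SLLN is applied conditionally on $e^{\mathtt{c}}_k$. Everywhere else the argument is a clean application of the strong law, consistent with the mean field heuristic that idiosyncratic noise averages out to a deterministic quantity while common noise persists as a genuine random source.
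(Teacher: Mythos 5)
Your proposal is correct and follows essentially the same route as the paper: write out the explicit exponential--Lévy form of $\bar{X}^{-i}_t$, apply the strong law of large numbers componentwise to the i.i.d.\ type-dependent quantities (initial wealth, drift, common and idiosyncratic Brownian coefficients), and handle the jump term by exploiting that $N$ places a.s.\ finitely many marks on $[0,t]\times E$ and that, conditionally on each common mark $e^{\mathtt{c}}_k$, the pairs $(\mathtt{t}^j,e^{\mathtt{i},j}_k)$ are i.i.d.\ with law $\mathbb{T}\otimes\nu^{\mathtt{i}}$. Your treatment is, if anything, slightly more explicit than the paper's about the conditional application of the SLLN in the jump term.
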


\begin{proof}
    For $n \in \mathbb{N}$, $i=1,\dots,n$, and $t \in [0,T]$ consider $\bar{X}^{-i}$ from \eqref{eq:averageLevyStructure} for the specified noise structure:  
    \begin{align*}
        \bar{X}^{-i}_t = \bigg( \prod_{j\neq i} x_0^j \bigg)^{\frac{1}{n}} \exp\bigg\{ t\frac{1}{n} \sum_{j \neq i} \big[r^j &+ \pi^j(0)(\kappa^j-r^j) - \frac{1}{2} ((\sigma^j)^2 + (\sigma^{0j})^2) (\pi^j(0))^2 \big] + \frac{1}{n} \sum_{j \neq i}  \sigma^j \pi^j(0) W^j_t \\
                 &+ \frac{1}{n} \sum_{j \neq i}\sigma^{0j} \pi^j(0) W^0_t + \sum_{t_k \leq t}  \frac{1}{n} \sum_{j \neq i} \log(1+ \pi^j(\zeta^j(e^{\mathtt{c}}_k, e^{\mathtt{i},j}_k))\eta^j(e^{\mathtt{c}}_k,e^{\mathtt{i},j}_k))  \bigg\}.
    \end{align*}
    By the law of large numbers, we immediately find
        \[ \lim_{n \to \infty} \bigg( \prod_{j\neq i} x_0^j \bigg)^{\frac{1}{n}} = \lim_{n \to \infty} \exp \bigg( \frac{1}{n} \sum_{j\neq i} \log(x^j_0) \bigg) = \exp \int_{\mathtt{T}} \log (x_0^\mathtt{t} ) \mathbb{T}(d\mathtt{t}).  \]
    Similarly, we find convergence of
    \begin{align*}
        \lim_{n \to \infty} \frac{1}{n} \sum_{j \neq i} \big[r^j & + \pi^j(0)(\kappa^j-r^j) - \frac{1}{2} ((\sigma^j)^2 + (\sigma^{0j})^2) (\pi^j(0))^2 \big]  \\
        &= \int_{\mathtt{T}} \left(r^\mathtt{t}+\pi^\mathtt{t}(0)(\kappa^\mathtt{t}-r^\mathtt{t}) - \frac{1}{2} ((\sigma^\mathtt{t})^2+(\sigma^{0\mathtt{t}})^2)(\pi^\mathtt{t}(0))^2\right) \mathbb{T}(d\mathtt{t}).
    \end{align*}
   For the term aggregating the independent idiosyncratic Brownian motions $W^j$, we find
    \begin{align*}
        \lim_{n \to \infty }\frac{1}{n} \sum_{j \neq i}  \sigma^j \pi^j(0) W^j_t =  0.
    \end{align*}
    For the term involving the common noise Brownian motion $W^0$, we get the average 
    \begin{align*}
        \lim_{n \to \infty } \frac{1}{n} \sum_{j \neq i}\sigma^{0j} \pi^j(0) W^0_t = W^0_t \int_{\mathtt{T}} \sigma^{0 \mathtt{t}}\pi^\mathtt{t}(0) \mathbb{T}(d\mathtt{t}).
    \end{align*}
    Lastly, for the jump term, we note that all investors have in common the mark times $t_k$ and the noise marks $e^{\mathtt{c}}_k$, while the $e^{\mathtt{i},j}_k$ are i.i.d.\ for $k,j \in \mathbb{N}$ with distribution $\nu^\mathtt{i}$. The law of large numbers thus yields the almost sure convergence 
    \begin{align*}
    	\lim_{n \to \infty} \sum_{t_k \leq t}  \frac{1}{n} \sum_{j \neq i} &\log(1+ \pi^j(\zeta^j(e^{\mathtt{c}}_k,e^{\mathtt{i},j}_k))\eta^j(e^{\mathtt{c}}_k,e^{\mathtt{i},j}_k)) \\
    	&= \sum_{t_k \leq t}  \int_{E^{\mathtt{i}}\times \mathtt{T}} \log\big( 1+ \pi^{\mathtt{t}}(\zeta^{\mathtt{t}}(e^{\mathtt{c}}_k,e^{\mathtt{i}})) \eta^{\mathtt{t}}(e^{\mathtt{c}}_k,e^{\mathtt{i}}) \big) \nu^{\mathtt{i}}(de^{\mathtt{i}}) \otimes \mathbb{T}(d\mathtt{t}) \\
    	&= \int_{[0,t] \times E^{\mathtt{c}}} \bigg( \int_{E^{\mathtt{i}}\times \mathtt{T}} \log\big( 1+ \pi^{\mathtt{t}}(\zeta^{\mathtt{t}}(e^{\mathtt{c}},e^{\mathtt{i}})) \eta^{\mathtt{t}}(e^{\mathtt{c}},e^{\mathtt{i}}) \big) \nu^{\mathtt{i}}(de^{\mathtt{i}}) \otimes \mathbb{T}(d\mathtt{t}) \bigg)N(ds,de^{\mathtt{c}},E^{\mathtt{i}}).
    \end{align*}
    Putting everything together, we find the desired result.
\end{proof}

It is an intriguing question whether, when adding more and more investors in an i.i.d.\ fashion, the corresponding Nash equilibria also converge to a mean field equilibrium, whose definition we provide and existence we establish in the following Section \ref{sec:mean_field_game}. Moreover, one could ask whether the average wealth of investors employing equilibrium strategies in a multi-agent game converges to the average wealth in the mean field game under its equilibrium strategy. We have to leave these questions for future research. The main reason is that we cannot even prove uniqueness of equilibria; see Remark \ref{remark:uniqueness} where we discuss the pertaining difficulties.

\section{The Mean Field Game}\label{sec:mean_field_game}

In this section we give a mean field formulation for interactive utility maximization with jump signals. Section~\ref{sec:representative_investor} provides a model for the representative investor which leads to a canonical notion of mean field equilibrium in signal-driven strategies. Section~\ref{sec:mean_field_HJB} provides the best response function which maps a proposed signal-driven strategy used in the mean field environment to the representative investor's best response. Section~\ref{sec:mean_field_existence} then proves existence of a mean field equilibrium, preparing the ground for the numerical case studies carried out in the next Section~\ref{sec:numerics}.

\subsection{The Representative Investor}\label{sec:representative_investor}

Guided by the considerations of the average wealth in Section~\ref{sec:limit}, let us assume that our representative investor's type is determined by a probability $\mathbb{T}=\mathbb{T}(d\mathtt{t})$ on the 
\textit{type space}
	\[ \mathtt{T} = (0,\infty) \times \mathbb{R}^2 \times [0,\infty) \times [0,\infty) \times L^0(E) \times L^0(E) \times ((0,\infty)\setminus\{1\}) \times [0,1] \]
with its Borel $\sigma$-field $\mathcal{T} := \mathcal{B}(\mathtt{T})$. As this type choice should be independent from the other sources of randomness in our system, we pass from our original probability space $(\Omega,\mathfrak{A},\mathbb{P})$ supporting independent Brownian motions $W$, $W^0$ along with an independent marked Poisson point process $N$ to
\[ \bar{\Omega} := \mathtt{T} \times \Omega,\; \bar{\mathfrak{A}} := \mathcal{T} \otimes \mathfrak{A} \text{ and } \bar{\mathbb{P}} := \mathbb{T} \otimes \mathbb{P}. \]
The processes $W$, $W^0$, $N$ are lifted to $\bar{\Omega}$ via the canonical projection $\bar{\Omega} \to \Omega$.

Similarly, we consider the projection $\tau(\bar{\omega}) = \tau(\mathtt{t},\omega) = \mathtt{t} \in \mathtt{T}$ defined for a generic $\bar{\omega} = (\mathtt{t},\omega) \in \mathtt{T} \times \Omega = \bar{\Omega}$. The components of this projection $\tau$ implicitly define the population's characteristics
	\[ \tau = (x_0,r,\kappa, \sigma, \sigma^{0},\eta,\zeta,\alpha,\theta)\]
	as measurable mappings on $\bar{\Omega}$ . To emphasize their dependence on the type, we sometimes will denote $x_0 = x_0^\tau $, i.e.\  $x_0^{\tau}(\bar{\omega}) = x_0^\mathtt{t}$, where the latter is the initial wealth component stored in the type vector $\mathtt{t} \in \mathtt{T}$; we proceed similarly for $r=r^\tau$, $\kappa = \kappa^\tau$, etc. 

Idiosyncratic and common jump noise can be introduced in analogy to our setting in Section~\ref{sec:limit} except that we only need idiosyncratic and common noise for one representative investor. We thus consider a mark space in product form    \[ E := E^{\mathtt{c}} \times E^{\mathtt{i}}\] with two Polish spaces $E^{\mathtt{c}}$, $E^{\mathtt{i}}$, equipped with their  respective Borel-$\sigma$-fields $\mathcal{E}^{\mathtt{c}} = \mathcal{B}(E^{\mathtt{c}})$ and $\mathcal{E}^i = \mathcal{B}(E^{\mathtt{i}})$. We write    \[ \nu:= \lambda (\nu^{\mathtt{c}} \otimes \nu^{\mathtt{i}}),\] as the product of two probability measures $\nu^{\mathtt{c}}$ and $\nu^{\mathtt{i}}$ scaled with the jump rate $\lambda \in (0,\infty)$. Elements of $E$ will be generically denoted by $e = (e^{\mathtt{c}},e^{\mathtt{i}}) \in E^{\mathtt{c}} \times E^{\mathtt{i}} = E$ and noise such as price shocks that commonly affect all investors will only depend on $e$ via its $e^{\mathtt{c}}$ component; idiosyncratic noise resulting from a mark $e$ such as shock warnings will also depend on $e^{\mathtt{i}}$, a source of randomness that we will use to allow for individual noise in signals that should be thought of as independent between agents. 

Let us also lift the information flows generated by the drivers. That is, denoting by $\mathcal{N}$ the set of $\bar{\mathbb{P}}$-negligible events, we have 
	\[ \bar{\mathcal{F}}^{W,W^0,N}_t := \sigma((W_s,W^0_s, N([0,s]\times A \times B)): s \in [0,t], A \in \mathcal{E}^{\mathtt{c}}, B \in \mathcal{E}^i)\vee \mathcal{N}.\] 
Supplemented by the information generated by the type $\tau$ this gives us the full information filtration
    \begin{align*}
        \bar{\mathcal{F}}_t := \bar{\mathcal{F}}^{W,W^0,N}_t \vee \sigma(\tau), \quad t \in [0,T].
    \end{align*}
Also, denote by $\mathcal{F}^{\text{0}}=(\mathcal{F}^0_t)_{t \in [0,T]}$ the common noise filtration
\begin{align*}
        \mathcal{F}^{0}_t := \sigma((W^0_s, N([0,s]\times A \times E^{\mathtt{i}})): s \in [0,t], A \in \mathcal{E}^{\mathtt{c}})\vee \mathcal{N}.
    \end{align*} 
An immediate, albeit fundamental consequence is that any  $\bar{\mathcal{F}}_0$-measurable random variable is actually $\sigma(\tau)$-measurable and independent of $N$, $W^0$ and $W$ under $\bar{\mathbb{P}}$. Additionally, 
	\[ \eta^{\tau}(\bar{\omega})(e) = \eta^{\mathtt{t}}(e) \text{ and } \zeta^{\tau}(\bar{\omega})(e) =  \zeta^{\mathtt{t}}(e) \]
are jointly measurable in their arguments.

As in the multi-agent game, investors will obtain information from their signal process. In the present mean field setting, we define the representative investor's signal process as
    \begin{equation*}
        Z_t = \int_{\{t\}\times E} \zeta(e) N(ds,de) \quad \text{ for } t \in [0,T].
    \end{equation*}
Observe that this signal process \emph{is} type-dependent as $$Z(\bar{\omega}) = Z^{\tau(\bar{\omega})}(\bar{\omega}) = Z^{\mathtt{t}}(\omega):=\left(\int_{\{t\}\times E} \zeta^{\mathtt{t}}(e) N(\omega)(ds,de)\right)_{t \in [0,T]} \text{ for } \bar{\omega} = (\mathtt{t},\omega) \in \mathtt{T} \times \Omega = \bar{\Omega}.$$

In the following, we rely on the type-dependent disintegration of the jump intensity measure: For all $\mathtt{t} \in \mathtt{T}$, we have
	\begin{equation}\label{eq:disintegration_mf}
        \nu(de \cap \{ \zeta^{\mathtt{t}} \neq 0\}) =\int_{\mathbb{R}\setminus \{0\}} K^{\mathtt{t}}(z,de) \mu^{\mathtt{t}}(dz), \quad \mu^{\mathtt{t}} :=\nu \circ (\zeta^{\mathtt{t}})^{-1},
    \end{equation}
where $\mathtt{t} \mapsto \mu^{\mathtt{t}}(\cdot)$ and $\mathtt{t} \mapsto K^{\mathtt{t}}(z,\cdot)$ are $\mathcal{T}$-measurable. This allows us to introduce
    \begin{align*}
 		&\underline{\eta}^{\mathtt{t}}(z):=K^{\mathtt{t}}(z,\cdot)\text{-ess inf } \eta^{\mathtt{t}},\\
        &\bar{\eta}^{\mathtt{t}}(z):=K^{\mathtt{t}}(z,\cdot)\text{-ess sup } \eta^{\mathtt{t}},\\
        &\underline{\eta}^{\mathtt{t}}(0):= \nu(\cdot \cap\{\zeta^{\mathtt{t}} =0, \eta^{\mathtt{t}} \neq 0\}) \text{-ess inf } \eta^{\mathtt{t}} \text{ and }\\
        &\bar{\eta}^{\mathtt{t}}(0):= \nu(\cdot \cap\{\zeta^{\mathtt{t}} =0, \eta^{\mathtt{t}} \neq 0\}) \text{-ess sup } \eta^{\mathtt{t}}
    \end{align*}
for all $\mathtt{t} \in \mathtt{T}$ and $z \in \mathbb{R}\setminus \{0\}$.

By the same reasoning as in the multi-agent game, we obtain both the no-arbitrage condition 
    \begin{equation}\label{eq:noarbitrage_mf}
        \underline{\eta}^{\mathtt{t}}(z) < 0 < \bar{\eta}^{\mathtt{t}}(z) \text{ for }  z \in \zeta^{\mathtt{t}}(E) \setminus \{0\},
    \end{equation}
and the admissible positions ensuring non-negative wealth: In case $\nu(\zeta^{\mathtt{t}} = 0, \eta^{\mathtt{t}} \neq 0) >0$ when unsignaled jumps are possible, the default positions without signal can be chosen from
    \begin{equation*}
        \Phi^{\mathtt{t}}(0) := 
        \begin{cases} [-1/ \bar{\eta}^{\mathtt{t}}(0), -1 / \underline{\eta}^{\mathtt{t}}(0) ], \quad & \text{ if } \underline{\eta}^{\mathtt{t}}(0) < 0 < \bar{\eta}^{\mathtt{t}}(0),\\
        [-1/ \bar{\eta}^{\mathtt{t}}(0), +\infty), \quad & \text{ if } 0 \leq \underline{\eta}^{\mathtt{t}}(0) \leq \bar{\eta}^{\mathtt{t}}(0), \quad \\
        (-\infty, -1 / \underline{\eta}^{\mathtt{t}}(0) ], \quad & \text{ if } \underline{\eta}^{\mathtt{t}}(0) \leq  \bar{\eta}^{\mathtt{t}}(0) \leq 0;
        \end{cases}
    \end{equation*}
if $\nu(\zeta^{\mathtt{t}} = 0, \eta^{\mathtt{t}} \neq 0) =0$, then all jumps are signaled and so $\Phi^{\mathtt{t}}(0) := \mathbb{R}$. For a non-zero signal we have
     \begin{align*}
         \Phi^{\mathtt{t}}(z) := [-1/ \bar{\eta}^{\mathtt{t}}(z), -1 / \underline{\eta}^{\mathtt{t}}(z)] \text{ for } z \in \mathbb{R}\setminus\{0\}.
     \end{align*}

We make the following standing assumption on the distribution of investor types.

\begin{assumption}\label{assumption:T} The type distribution $\mathbb{T}$ is such that
	 \begin{enumerate}
    	\item[$(\mathtt{T}_1)$]\namedlabel{itm:T1}{$(\mathtt{T}_1)$} the stock prices are nonnegative, i.e. $\nu(\{\eta^{\mathtt{t}} < -1 \}) = 0$ for $\mathbb{T}$-almost every $\mathtt{t} \in \mathtt{T}$, 
    	\item[$(\mathtt{T}_2)$]\namedlabel{itm:T2}{$(\mathtt{T}_2)$} the no-arbitrage condition, i.e. $\underline{\eta}^{\mathtt{t}}(z) < 0 < \bar{\eta}^{\mathtt{t}}(z) \text{ for $\mu^{\mathtt{t}}$-a.e. }  z \in \zeta^{\mathtt{t}}(E) \setminus \{0\}$, holds for $\mathbb{T}$-almost every $\mathtt{t} \in \mathtt{T}$,
    	\item[$(\mathtt{T}_3)$]\namedlabel{itm:T3}{$(\mathtt{T}_3)$} each investor type can receive only finitely many different signals, i.e. $|\zeta^{\mathtt{t}}(E)|<\infty$ for $\mathbb{T}$-almost every $\mathtt{t}\in\mathtt{T}$, and
    	\item[$(\mathtt{T}_4)$]\namedlabel{itm:T4}{$(\mathtt{T}_4)$} the market parameters and the risk aversion are bounded, i.e.             \[ \mathbb{T}(d\mathtt{t})\text{-}\esssup (|r^\mathtt{t}| +|\kappa^\mathtt{t}|+\sigma^\mathtt{t}+\sigma^{0\mathtt{t}}+\alpha^\mathtt{t}) < \infty\]
        and the initial wealth allows for a finite geometric average
        \[
         \log x^{\mathtt{t}}_0 \in L^1(\mathbb{T}(d\mathtt{t})).
        \]
    	\item[($\mathtt{T}_{5}$)]\namedlabel{itm:T5}{$(\mathtt{T}_{5})$} jump bounds are uniformly bounded away from zero: There exists $c_0>0$ such that for $\mathbb{T}$-almost every $\mathtt{t} \in \mathtt{T}$ 
			\begin{align*}
				 \bar{\eta}^{\mathtt{t}}(z) \geq c_0 > 0   \text{ and }  \underline{\eta}^{\mathtt{t}}(z) \leq -c_0 < 0 \quad \text{ for all } z \in \mathbb{R}
			\end{align*}
    \item[$(\mathtt{T}_{6})$]\namedlabel{itm:T6}{$(\mathtt{T}_6)$}   if only finitely many distinct types can occur (i.e.\ if $|\supp \mathbb{T}|<\infty$), jump sizes have finite moments in the sense that 
            \[ \int_{E} (1+\eta^{\mathtt{t}}(e))^p \nu(de) < \infty \text{ for all } p > 0, \ \mathtt{t} \in \supp \mathbb{T}; \]
    otherwise, we need uniform boundedness in the sense that
             \begin{align*}
				\mathbb{T}(d\mathtt{t})\otimes \nu(de)\text{-}\esssup \eta^\mathtt{t}(e) < \infty.
			\end{align*}
    \end{enumerate}
\end{assumption}

Let us review these assumptions. First of all, \ref{itm:T1}-\ref{itm:T3} are consistent with our assumptions in the multi-agent game, see Assumption \ref{assumption:admissible_type}. Further, \ref{itm:T4} will give us bounds on the geometric mean field wealth by ensuring that market parameters do not differ too much between different types of investors. It will hold immediately if $\mathbb{T}$ distinguishes only finitely many types.  Similarly, \ref{itm:T5} will ensure `nice' properties of the mean field jumps, as it yields uniform bounds for admissible positions of  all investor types, see \ref{itm:(i)} in Lemma~\ref{lemma:mf_tools} below. This will be particularly important in the construction of a suitable fixed point mapping in our proof for existence of mean field equilibria, see  Lemma~\ref{lemma:K}. Lastly, \ref{itm:T6} ensures sufficient integrability of stock price jumps, a condition also needed in the multi-agent game.\\

Let us next define the set of admissible strategies for the representative investor. For this, we will again consider compact intervals $\bar{\Phi}^{\mathtt{t}}(z)$ included in $\{0\} \cup \Interior\Phi^{\mathtt{t}}(z)$ for all $z \in \mathbb{R}$, $\mathtt{t} \in \mathtt{T}$. However, to ensure measurability and uniformly nonnegative wealth, we specify the intervals to be of the following form: If $\mathbb{T}$ is finitely supported, we let
	\[  \bar{\Phi}^{\mathtt{t}}(z) :=
	 \left[ - \frac{1-a^{\mathtt{t}}}{\overline{\eta}^{\mathtt{t}}(z)}, - \frac{1-b^{\mathtt{t}}}{\underline{\eta}^{\mathtt{t}}(z)} \right] \text{ for $z \in \mathbb{R}$, $\mathtt{t} \in \supp \mathbb{T}$,} \]
for {$\mathcal{T}$-measurable} $\mathtt{t} \mapsto a^{\mathtt{t}}$, $\mathtt{t} \mapsto b^{\mathtt{t}} \in (0,1]$. In case of infinitely many possible types, we consider
	\begin{align}\label{eq:positionboundsTinfty}
    \bar{\Phi}^{\mathtt{t}}(z) := \left[ - \frac{1-a}{\overline{\eta}^{\mathtt{t}}(z)} , - \frac{1-b}{\underline{\eta}^{\mathtt{t}}(z)}  \right] 
    \end{align}
	for all $\mathtt{t} \in \mathtt{T}$ and for type-independent $a,b \in (0,1]$.

To prepare the application of the Measurable Maximum Theorem (cf. \cite[Theorem 18.19]{Aliprantis2006}) let us record the following lemma:
\begin{lemma}\label{lemma:weak_mb_Phi}
     The map $\mathtt{t} \mapsto \bar{\Phi}^{\mathtt{t}}(z)$ is weakly $\mathcal{T}$-measurable in the sense of \cite[Def. 18.1]{Aliprantis2006}, that is, for all open subsets $B$ of $\mathbb{R}$, we have
			\[ \{ \mathtt{t} \in \mathtt{T} : \bar{\Phi}^{\mathtt{t}}(z) \cap B \neq \emptyset \} \in \mathcal{T}  \text{ for any } z \in \mathbb{R}. \]
\end{lemma}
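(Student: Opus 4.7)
The plan is to reduce weak measurability of the closed-interval-valued correspondence $\mathtt{t} \mapsto \bar{\Phi}^{\mathtt{t}}(z)$ to pointwise measurability of its two endpoints in $\mathtt{t}$. Concretely, under Assumption~\ref{assumption:T} (especially \ref{itm:T5}) the interval has the form $\bar{\Phi}^{\mathtt{t}}(z) = [\ell^{\mathtt{t}},u^{\mathtt{t}}]$ with $\ell^{\mathtt{t}} \leq 0 \leq u^{\mathtt{t}}$, where
\[
\ell^{\mathtt{t}} = -\frac{1-a^{\mathtt{t}}}{\bar{\eta}^{\mathtt{t}}(z)}, \qquad u^{\mathtt{t}} = -\frac{1-b^{\mathtt{t}}}{\underline{\eta}^{\mathtt{t}}(z)}
\]
(with $a,b$ constants in the case of infinite support). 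For any open $B \subset \mathbb{R}$ I write $B = \bigcup_{n} (p_n,q_n)$ as a countable union of open rational intervals. For a nonempty closed interval $[\ell,u]$ one has $[\ell,u]\cap (p_n,q_n)\neq \emptyset$ precisely when $\ell < q_n$ and $u > p_n$, so
\[
\{\mathtt{t}\in \mathtt{T} : \bar{\Phi}^{\mathtt{t}}(z)\cap B \neq \emptyset\} = \bigcup_{n} \bigl(\{\mathtt{t}:\ell^{\mathtt{t}}<q_n\}\cap \{\mathtt{t}:u^{\mathtt{t}}>p_n\}\bigr),
\]
which lies in $\mathcal{T}$ as soon as $\ell^{\mathtt{t}}$ and $u^{\mathtt{t}}$ are real-valued $\mathcal{T}$-measurable functions of $\mathtt{t}$.

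The heart of the argument is therefore the measurability of $\mathtt{t}\mapsto \bar{\eta}^{\mathtt{t}}(z)$ and $\mathtt{t}\mapsto \underline{\eta}^{\mathtt{t}}(z)$; the factors $a^{\mathtt{t}},b^{\mathtt{t}}$ are $\mathcal{T}$-measurable by construction (or constant), and the denominators are bounded away from $0$ by \ref{itm:T5}, so continuity of the arithmetic operations will finish the argument. To handle $\bar{\eta}^{\mathtt{t}}(z)$, I use that the projection $\tau$ has $\eta^\tau(\bar{\omega})(e)=\eta^{\mathtt{t}}(e)$ jointly measurable in $(\mathtt{t},e)$ and that the kernel $\mathtt{t}\mapsto K^{\mathtt{t}}(z,\cdot)$ is $\mathcal{T}$-measurable (from the disintegration \eqref{eq:disintegration_mf}). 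For any $c\in\mathbb{R}$, the map
\[
\mathtt{t} \longmapsto K^{\mathtt{t}}(z,\{\eta^{\mathtt{t}}>c\}) = \int_{E} \mathbf{1}_{\{\eta^{\mathtt{t}}(e)>c\}}\, K^{\mathtt{t}}(z,de)
\]
is $\mathcal{T}$-measurable by a standard monotone-class / Fubini argument applied to the jointly measurable integrand and the measurable kernel. Then
\[
\{\mathtt{t}:\bar{\eta}^{\mathtt{t}}(z)\leq c\} = \bigcap_{n \in \mathbb{N}} \{\mathtt{t} : K^{\mathtt{t}}(z,\{\eta^{\mathtt{t}}>c+1/n\})=0\}
\]
is $\mathcal{T}$-measurable, so $\mathtt{t}\mapsto \bar{\eta}^{\mathtt{t}}(z)$ is measurable; the analogous countable-intersection argument gives measurability of $\underline{\eta}^{\mathtt{t}}(z)$. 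For $z=0$ one replaces $K^{\mathtt{t}}(z,\cdot)$ by the (still $\mathcal{T}$-measurable) finite measure $\nu(\cdot \cap \{\zeta^{\mathtt{t}}=0,\eta^{\mathtt{t}}\neq 0\})$, and the same argument goes through.

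The main technical obstacle I anticipate is precisely the measurability of the essential supremum/infimum against a $\mathtt{t}$-dependent kernel: one has to combine joint measurability of $(\mathtt{t},e)\mapsto \eta^{\mathtt{t}}(e)$ with the $\mathcal{T}$-measurability of $\mathtt{t}\mapsto K^{\mathtt{t}}(z,\cdot)$ carefully, and the passage from $K^{\mathtt{t}}(z,\{\eta^{\mathtt{t}}>c\})=0$ to $\bar\eta^{\mathtt{t}}(z)\leq c$ must be done by intersecting over a countable sequence $c+1/n$ to avoid taking a supremum over an uncountable family of measurable sets. Once this is in place, piecing everything back together through the first paragraph's reduction yields the claimed weak $\mathcal{T}$-measurability and completes the proof.
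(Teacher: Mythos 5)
Your proof is correct and follows the same overall skeleton as the paper's: reduce weak measurability of the compact-interval-valued map to $\mathcal{T}$-measurability of its two endpoints, and hence to $\mathcal{T}$-measurability of $\mathtt{t}\mapsto\bar{\eta}^{\mathtt{t}}(z)$ and $\mathtt{t}\mapsto\underline{\eta}^{\mathtt{t}}(z)$. Your opening reduction (writing $B$ as a countable union of rational intervals and using that a nonempty closed interval $[\ell,u]$ meets $(p,q)$ iff $\ell<q$ and $u>p$) is a fully spelled-out version of a step the paper only asserts, and it is valid here because \ref{itm:T5} guarantees $\ell^{\mathtt{t}}\le 0\le u^{\mathtt{t}}$, so the interval is never empty. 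Where you genuinely diverge is the key technical step: the paper obtains measurability of the essential supremum by writing $\bar{\eta}^{\mathtt{t}}(0)$ as the pointwise limit $\lim_{p\to\infty}\|\eta_+^{\mathtt{t}}\|_{L^p(\nu(\cdot\cap\{\zeta^{\mathtt{t}}=0,\eta^{\mathtt{t}}\neq 0\}))}$ of $\mathcal{T}$-measurable functions, whereas you characterize the sublevel sets $\{\mathtt{t}:\bar{\eta}^{\mathtt{t}}(z)\le c\}$ through the vanishing of $K^{\mathtt{t}}(z,\{\eta^{\mathtt{t}}>c+1/n\})$, intersected over $n$. Both routes rest on the same ingredients, namely joint $\mathcal{T}\otimes\mathcal{E}$-measurability of $(\mathtt{t},e)\mapsto\eta^{\mathtt{t}}(e)$ and $\mathcal{T}$-measurability of the type-dependent measure (the kernel from \eqref{eq:disintegration_mf}, respectively $\nu(\cdot\cap\{\zeta^{\mathtt{t}}=0,\eta^{\mathtt{t}}\neq 0\})$), and both are correct. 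Your superlevel-set version has two modest advantages: it treats the kernel case $z\neq 0$ on exactly the same footing as $z=0$ (which the paper dismisses as ``similar''), and it sidesteps the $\|\cdot\|_{L^p}\to\esssup$ limit, which for the essential infimum and for possibly negative $\eta^{\mathtt{t}}$ needs the sign information from \ref{itm:T2}/\ref{itm:T5} to identify $\esssup\eta_+^{\mathtt{t}}$ with $\bar{\eta}^{\mathtt{t}}$.
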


\begin{proof} 
It suffices to show that for all $x,y \in \mathbb{R}$ we have
	\[ \{ \mathtt{t} \in \mathtt{T} : \bar{\Phi}^{\mathtt{t}}(z) \cap (x,y)   \neq \emptyset \} \in \mathcal{T} \]
for $z \in \mathbb{R}$. Thus, we get the desired measurability for both maps as soon as we have shown measurability of the interval bounds, that is, we need to show that each of the maps
	\begin{align*}
 		&\mathtt{t} \mapsto \underline{\eta}^{\mathtt{t}}(0)= \nu(\cdot \cap\{\zeta^{\mathtt{t}} =0, \eta^{\mathtt{t}} \neq 0\}) \text{-ess inf } \eta^{\mathtt{t}}, \\
        &\mathtt{t} \mapsto \overline{\eta}^{\mathtt{t}}(0)= \nu(\cdot \cap\{\zeta^{\mathtt{t}} =0, \eta^{\mathtt{t}} \neq 0\}) \text{-ess sup } \eta^{\mathtt{t}},\\
        &\mathtt{t} \mapsto \underline{\eta}^{\mathtt{t}}(z)=K^{\mathtt{t}}(z,\cdot)\text{-ess inf } \eta^{\mathtt{t}}, \\
        &\mathtt{t} \mapsto \overline{\eta}^{\mathtt{t}}(z)=K^{\mathtt{t}}(z,\cdot)\text{-ess sup } \eta^{\mathtt{t}}
    \end{align*}
is $\mathcal{T}$-measurable.
 
The argument for the other maps being similar, let us show that $\mathtt{t} \mapsto \overline{\eta}^{\mathtt{t}}(0)$ is $\mathcal{T}$-measurable. Noting that by Fubini-Tonelli
        \[ \mathtt{t} \mapsto \| \eta_+^{\mathtt{t}}\|_{L^p(\nu(\cdot \cap\{\zeta^{\mathtt{t}} =0, \eta^{\mathtt{t}} \neq 0\}))} = \int_E \mathds{1}_{\{\zeta^{\mathtt{t}}(e) =0, \eta^{\mathtt{t}}(e) \neq 0\}}|\eta_+^{\mathtt{t}}(e)|^p \nu(de) \footnote{For any function $f: \bar{\Omega} \times E \to \mathbb{R}$ we define $f_+(\mathtt{t},e) := \max \{f(\mathtt{t},e),0\}$.}  \]
is $\mathcal{T}$-measurable since the integrand is jointly $\mathcal{T} \otimes \mathcal{E}$-measurable and positive, we can write $\overline{\eta}^{\mathtt{t}}(0)$ as the pointwise limit of $\mathcal{T}$-measurable functions:
		\[ \overline{\eta}^{\mathtt{t}}(0)=\nu(\cdot \cap\{\zeta^{\mathtt{t}} =0, \eta^{\mathtt{t}} \neq 0\})\text{-}\esssup \eta^{\mathtt{t}}(\cdot) = \lim_{p\to \infty} \| \eta_+^{\mathtt{t}} \|_{L^p(\nu(\cdot \cap\{\zeta^{\mathtt{t}} =0, \eta^{\mathtt{t}} \neq 0\}))}. \]
Thus, $\mathtt{t} \mapsto \overline{\eta}^{\mathtt{t}}(0)$ is $\mathcal{T}$-measurable. 
\end{proof}

As before, admissible strategies are signal-dependent investment choices of the form
	\begin{equation}\label{eq:meyer_mb_strategy}
		\hat{\phi}_t(\bar{\omega},Z_t(\bar{\omega})) = \hat{\phi}^{\mathtt{t}}_t(\omega,Z^{\mathtt{t}}_t(\omega)), \quad (\bar{\omega},t)=(\mathtt{t},\omega,t)  \in \bar{\Omega} \times [0,T]
	\end{equation}
for some $\mathcal{P}(\bar{\mathcal{F}})\otimes \mathcal{B}(\mathbb{R})$-measurable field $\hat{\phi}:[0,T]\times \bar{\Omega} \times \mathbb{R}\to \mathbb{R}$. 
In the light of Meyer-$\sigma$-fields, we consider strategies measurable with respect to $\Lambda := \mathcal{P}(\bar{\mathcal{F}}) \vee \sigma(Z).$
The set of admissible strategies is        
	\begin{equation*}
		\bar{\mathcal{A}} := \{ \hat{\phi} \text{ is of form \eqref{eq:meyer_mb_strategy} and }  \hat{\phi}^{\mathtt{t}}_t(z) \in \bar{\Phi}^{\mathtt{t}}(z) \text{ for  all } z \in \mathbb{R},\ \mathtt{t} \in \mathtt{T},\ t \in [0,T] \}.
    \end{equation*}
Although $\bar{\mathcal{A}}$ is very similar to the definition in the multi-agent game, here we insist on $\bar{\mathcal{A}}$ being a set of positions for the representative investor, not for an investor of a specific type.

\begin{proposition}\label{prop:admissible_strategy_mf}
    Given an admissible strategy $\hat{\phi}_t \in \bar{\mathcal{A}}$, the representative investor's wealth process $X^{\hat{\phi}}$ has dynamics
    \begin{equation}\label{eq:mean_field_wealth}
    \begin{split}
        X^{\hat{\phi}}_0 &= x_0^\tau,\\
        \frac{dX^{\hat{\phi}}_t}{X^{\hat{\phi}}_{t-}} &= (r^\tau+\hat{\phi}_t(0)(\kappa^{\tau}-r^\tau))dt +  \sigma^{\tau} \hat{\phi}_t(0) dW_t + \sigma^{0\tau} \hat{\phi}_t(0) dW^0_t + \int_E \eta^{\tau}(e)\hat{\phi}_t(\zeta^{\tau}(e)) N(dt,de), 
    \end{split}
    \end{equation}
    for $t\in [0,T]$ and it remains uniformly bounded away from zero on $[0,T]$ $\bar{\mathbb{P}}$-almost surely.
\end{proposition}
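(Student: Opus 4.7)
The plan is to prove the two claims of the proposition in turn: first the well-posedness of the wealth SDE~\eqref{eq:mean_field_wealth}, and then the uniform-in-time strict positivity of its solution $\bar{\mathbb{P}}$-a.s. Overall the argument mirrors that of Proposition~\ref{prop:admissible_strategy} from the multi-agent setting, with the one new wrinkle being the randomness of the representative investor's type $\tau$, which I would handle by (implicitly) conditioning on the $\bar{\mathcal{F}}_0$-measurable $\tau$.

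For the SDE, I would use Lemma~\ref{lemma:meyer_mb} applied on $\bar{\Omega}$ to see that $\hat{\phi}$ is $\Lambda$-measurable with $\hat{\phi}^\mathtt{t}_t(z) \in \bar{\Phi}^\mathtt{t}(z)$ for every $z\in\mathbb{R}$, $\mathtt{t} \in \mathtt{T}$. By $(\mathtt{T}_3)$ the signal $Z$ takes values in the finite set $\zeta^\tau(E)\cup\{0\}$, on which $\bar{\Phi}^\tau$ consists of bounded intervals of the explicit form~\eqref{eq:positionboundsTinfty}, making $\hat{\phi}_t$ uniformly bounded in $(\bar{\omega},t)$. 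This, together with the $\bar{\mathbb{P}}$-a.s.\ finite market coefficients from $(\mathtt{T}_4)$ and the finiteness of the jump intensity $\nu(E)=\lambda$ (which makes the jump integral in~\eqref{eq:mean_field_wealth} a $\bar{\mathbb{P}}$-a.s.\ finite sum on $[0,T]$), yields a unique strong solution along the lines of Corollary~2.5 in~\cite{BK22}.

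For the positivity claim, I would pass to the Dol\'eans--Dade representation
\begin{equation*}
X^{\hat{\phi}}_t = x_0^\tau \exp\!\Biggl\{\int_0^t A_s\,ds + \int_0^t \sigma^\tau \hat{\phi}_s(0)\,dW_s + \int_0^t \sigma^{0\tau} \hat{\phi}_s(0)\,dW^0_s\Biggr\} \prod_{\substack{0<s\leq t\\ \Delta N_s\neq 0}}\!\bigl(1 + \hat{\phi}_s(\zeta^\tau(\Delta e_s))\,\eta^\tau(\Delta e_s)\bigr),
\end{equation*}
where $A_s := r^\tau + \hat{\phi}_s(0)(\kappa^\tau-r^\tau) - \tfrac12\bigl((\sigma^\tau)^2+(\sigma^{0\tau})^2\bigr)(\hat{\phi}_s(0))^2$. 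Here $x_0^\tau>0$ $\bar{\mathbb{P}}$-a.s.\ by definition of the type space, and the exponential factor is continuous and $\bar{\mathbb{P}}$-a.s.\ finite on $[0,T]$, hence attains a strictly positive infimum there thanks to the boundedness of $A_s$ and $\hat{\phi}_s(0)$ and the continuity of Brownian paths. The crucial step is a uniform lower bound on each jump factor, exploiting the slack in~\eqref{eq:positionboundsTinfty}: a short case split shows that for any $\varphi\in\bar{\Phi}^\tau(z)$ and any $\eta\in[\underline{\eta}^\tau(z),\overline{\eta}^\tau(z)]$ one has $1+\varphi\eta\geq \min(a^\tau,b^\tau)>0$ $\bar{\mathbb{P}}$-a.s., trivially when $a,b$ are deterministic constants and by $\mathcal{T}$-measurability over the finite support otherwise. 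Combined with the $\bar{\mathbb{P}}$-a.s.\ finiteness of the jump count $N_T$ on $[0,T]$, the finite product is bounded below by $(\min(a^\tau,b^\tau))^{N_T}>0$, giving $\inf_{t\in[0,T]} X^{\hat{\phi}}_t > 0$ $\bar{\mathbb{P}}$-a.s.

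I expect the main obstacle to be precisely this uniform jump bound across the random (but almost surely finite) set of jump times; the design of~\eqref{eq:positionboundsTinfty} with slack $a^\tau,b^\tau\in(0,1]$ is tailored exactly to make it work. The remaining adaptations from the multi-agent proof are routine, with the required joint measurability in $(\tau,\omega,t)$ guaranteed by Assumption~\ref{assumption:T} and Lemma~\ref{lemma:weak_mb_Phi}.
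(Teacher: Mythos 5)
Your proposal is correct and follows essentially the same route as the paper: the paper reduces the positivity claim to its Lemma~\ref{lemma:mf_lower_bound}, whose content is precisely your ``uniform jump bound'' $1+\varphi\eta\ge\min\{a,b\}>0$ obtained from the slack built into $\bar{\Phi}^{\mathtt{t}}(z)$, and the remaining steps (finitely many jumps since $\nu$ is finite, continuity and boundedness of the Brownian/drift part, reference to Corollary~2.5 of \cite{BK22}) are left implicit there just as you treat them. Your write-up merely makes the Dol\'eans--Dade factorization and the pathwise infimum argument explicit.
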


The proof of this uniform boundedness is an immediate consequence of the following result:
\begin{lemma}\label{lemma:mf_lower_bound}
	Independently from the strategy applied, the returns from jumps are uniformly bounded from below: There exists a constant $c > 0$ such that for all $\hat{\phi} \in \bar{\mathcal{A}}$:
		\[ 0 < c \leq (\mathbb{T}(d\mathtt{t})\otimes \nu(de))\text{-}\essinf (1+ \hat{\phi}^{\mathtt{t}}(\zeta^{\mathtt{t}}(e))\eta^{\mathtt{t}}(e)). \]
\end{lemma}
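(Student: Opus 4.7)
The plan is to read the bound directly off the structure of the admissible position intervals and the essential range of $\eta^{\mathtt{t}}$ conditional on the signal value. The lemma is, at its core, a bookkeeping check: the definition of $\bar{\Phi}^{\mathtt{t}}(z)$ was \emph{engineered} precisely to guarantee such a uniform lower bound on jump returns.

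First, I fix $\tilde{\phi} = \hat{\phi}(Z) \in \bar{\mathcal{A}}$. By admissibility, at every $(\mathtt{t},\omega,t)$ and every $z \in \mathbb{R}$ we have
$$\hat{\phi}^{\mathtt{t}}_t(z) \in \bar{\Phi}^{\mathtt{t}}(z) = \bigl[-\tfrac{1-a^{\mathtt{t}}}{\overline{\eta}^{\mathtt{t}}(z)},\, -\tfrac{1-b^{\mathtt{t}}}{\underline{\eta}^{\mathtt{t}}(z)}\bigr],$$
with $a^{\mathtt{t}},b^{\mathtt{t}}$ replaced by the type-independent $a,b$ from~\eqref{eq:positionboundsTinfty} in the infinite-type case. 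Assumption~\ref{itm:T5} guarantees $\overline{\eta}^{\mathtt{t}}(z)\geq c_0$ and $\underline{\eta}^{\mathtt{t}}(z)\leq -c_0$ for $\mathbb{T}$-a.e.\ $\mathtt{t}$ and every $z$, so these intervals are non-degenerate and straddle the origin.

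Next, I observe that the disintegration~\eqref{eq:disintegration_mf} together with the definitions of $\underline{\eta}^{\mathtt{t}}(0),\overline{\eta}^{\mathtt{t}}(0)$ gives, for $\mathbb{T}\otimes\nu$-a.e.\ $(\mathtt{t},e)$ with $z := \zeta^{\mathtt{t}}(e)$,
$$\underline{\eta}^{\mathtt{t}}(z) \leq \eta^{\mathtt{t}}(e) \leq \overline{\eta}^{\mathtt{t}}(z),$$
the remaining set $\{\zeta^{\mathtt{t}}=0,\ \eta^{\mathtt{t}}=0\}$ being trivial since there $1+\varphi\eta^{\mathtt{t}}=1$. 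It then remains to verify the purely algebraic claim
$$1+\varphi y \;\geq\; \min(a^{\mathtt{t}},b^{\mathtt{t}}) \quad \text{for all } \varphi \in \bar{\Phi}^{\mathtt{t}}(z),\; y \in [\underline{\eta}^{\mathtt{t}}(z),\overline{\eta}^{\mathtt{t}}(z)].$$
This is a four-corner check: on the rectangle $[\varphi_{\min},\varphi_{\max}]\times[y_{\min},y_{\max}]$ the bilinear map $\varphi y$ attains its minimum at a corner with opposite signs, giving either $\varphi_{\min}y_{\max}=-(1-a^{\mathtt{t}})$ or $\varphi_{\max}y_{\min}=-(1-b^{\mathtt{t}})$, whence $1+\varphi y \geq \min(a^{\mathtt{t}},b^{\mathtt{t}})$.

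Finally I collect the constant. In the finite-support case, set $c := \min_{\mathtt{t} \in \supp \mathbb{T}} \min(a^{\mathtt{t}},b^{\mathtt{t}})$, a minimum over finitely many numbers in $(0,1]$, hence strictly positive. In the infinite-support case, \eqref{eq:positionboundsTinfty} uses type-independent $a,b \in (0,1]$, so $c := \min(a,b) > 0$ works directly. In either case, $1+\hat{\phi}^{\mathtt{t}}_t(\zeta^{\mathtt{t}}(e))\eta^{\mathtt{t}}(e) \geq c$ for $\mathbb{T}\otimes\nu$-a.e.\ $(\mathtt{t},e)$, which yields the asserted essinf bound. I do not expect any substantive obstacle; the whole content of the proof is the elementary corner-check and the fact that the minimum over the (possibly infinite but in that case constant) family of $a^{\mathtt{t}},b^{\mathtt{t}}$ is strictly positive.
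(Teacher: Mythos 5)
Your proof is correct and follows essentially the same route as the paper: the bound is read off the construction of $\bar{\Phi}^{\mathtt{t}}(z)$, with the explicit corner evaluation yielding $1+\varphi y\geq\min(a,b)$ in the infinite-support case exactly as in the paper's displayed estimate, and the finite-support case handled by minimizing over the finitely many types (where the paper instead just points back to the compactness argument of Lemma~\ref{lemma:unif_bdd}). Your version is marginally more explicit in the finite-support case by producing the quantitative constant $\min_{\mathtt{t}\in\supp\mathbb{T}}\min(a^{\mathtt{t}},b^{\mathtt{t}})$, but there is no substantive difference.
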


\begin{proof}
	If $|\supp \mathbb{T}| < \infty$, we easily argue  as in Lemma \ref{lemma:unif_bdd}. If $|\supp \mathbb{T}| = \infty$, we have by our choice of admissible positions~\eqref{eq:positionboundsTinfty} in that case that 
		\begin{align*}
			1 + \hat{\phi}_t^{\mathtt{t}}(\zeta^{\mathtt{t}}(e))\eta^{\mathtt{t}}(e) \geq 1+ \min \left\{ - \frac{1-a}{\underline{\eta}^{\mathtt{t}}(\zeta^{\mathtt{t}}(e))} \cdot \underline{\eta}^{\mathtt{t}}(\zeta^{\mathtt{t}}(e)) , \  - \frac{1-b}{\overline{\eta}^{\mathtt{t}}(\zeta^{\mathtt{t}}(e)))} \cdot \bar{\eta}^{\mathtt{t}}(\zeta^{\mathtt{t}}(e)) \right\} =  \min \{a,b\} > 0
        \end{align*}
        for all $e \in E$ and all $\mathtt{t} \in \mathtt{T}$.
\end{proof}

As in the multi-agent game, we will prove existence of equilibria for the special class of signal-driven strategies:

\begin{definition}\label{defn:signal-driven_strategy_mf}
    An admissible strategy $\hat{\phi} \in \bar{\mathcal{A}}$ is called \textit{(purely) signal-driven} if $\hat{\phi}$ in \eqref{eq:meyer_mb_strategy} does not depend on $t$ and $\omega$, that is, if it can be written in the form	
	\begin{equation}\label{eq:strategyform_mf}
	   \hat{\phi}_t(\bar{\omega},z) = \phi^{\mathtt{t}}(z), \quad (\bar{\omega},t) \in \bar{\Omega} \times [0,T]
	\end{equation}
    for a $\mathcal{T}\otimes\mathcal{B}(\mathbb{R})$-measurable time-independent function  $\phi: \mathtt{T} \times \mathbb{R} \to \mathbb{R}, \ (\mathtt{t},z) \mapsto \phi^{\mathtt{t}}(z)$. We denote by
        \[ \bar{\mathcal{A}}_{\text{sig}} := \{ \hat{\phi} \in \bar{\mathcal{A}} \text{ of form } \eqref{eq:strategyform_mf} \}  \]
    the set of admissible signal-driven strategies.
\end{definition}
We will limit our search for (yet to be defined) mean field equilibria to signal-driven ones, following the same reasoning as in the multi-agent case. Also, we will, in preparation for our proof of existence of an equilibrium, solve a best response problem for the representative investor given a fixed environment. This environment is again described through the geometric mean wealth as induced by a signal-driven strategy $\pi=(\pi^{\mathtt{t}}) \in \bar{\mathcal{A}}_{\text{sig}}$ which specifies each type's $\mathtt{t}$ signal-driven strategy $\pi^{\mathtt{t}}$. Indeed, denoting by $\bar{\mathbb{E}}[\cdot]$ the expected value over the product space $\bar{\Omega}$ with respect to $\bar{\mathbb{P}}$, we define the average wealth in the mean field setting as
	\[ \bar{X}^{\pi}_t := \exp \bar{\mathbb{E}}[ \log X^{\pi}_t |\mathcal{F}^0_T]. \]
In light of our considerations for the limiting average wealth with infinitely many agents in Section~\ref{sec:limit}, our next result shows that this is consistent with our initial derivation of average wealth in Lemma \ref{lemma:limit}:

\begin{lemma}\label{lemma:mean_wealth_dynamics}
	For any $\pi \in \bar{\mathcal{A}}_\text{sig}$ and respective wealth process $X^{\pi}$ from Proposition \ref{prop:admissible_strategy_mf}, we find that $\bar{X}^\pi_t = \exp \bar{\mathbb{E}}[ \log X^{\pi}_t |\mathcal{F}^0_T]$ is given by
	\begin{equation}\label{eq:closed_from_y}
        \begin{split}
            \bar{X}^\pi_t = \bar{x}_0 \exp \bigg\{ \overline{\tau\pi} t + \overline{\sigma^0 \pi} W^0_t + \int_{[0,t] \times E^{\mathtt{c}}} \log(\overline{1+ (\pi \circ \zeta)\eta}(e^{\mathtt{c}}))  N(ds,de^{\mathtt{c}},E^{\mathtt{i}}) \bigg\}
        \end{split}
    \end{equation}
    for $\bar{x}_0$, $\overline{\tau\pi}$, $\overline{\sigma^0\pi}$ as defined in \eqref{eq:mean_components} and $\overline{1+ (\pi \circ \zeta)\eta}(e^{\mathtt{c}})$ as defined in \eqref{eq:mean_jumps}. In particular, the dynamics coincide with the limit from Lemma \ref{lemma:limit}.
\end{lemma}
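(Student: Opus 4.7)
The plan is to compute $\log X^\pi_t$ explicitly via It\^o's formula and then take the conditional expectation $\bar{\mathbb{E}}[\,\cdot\,|\mathcal{F}^0_T]$ termwise, exploiting the orthogonality of the common and idiosyncratic sources of randomness. Since Proposition~\ref{prop:admissible_strategy_mf} guarantees $X^\pi > 0$ and Lemma~\ref{lemma:mf_lower_bound} gives $1 + \pi^{\tau}(\zeta^{\tau}(e))\eta^{\tau}(e) \geq c > 0$, the logarithm is well-defined and
\begin{align*}
\log X^\pi_t &= \log x_0^\tau + \int_0^t \bigl[ r^\tau + \pi^\tau(0)(\kappa^\tau-r^\tau) - \tfrac{1}{2}((\sigma^\tau)^2 + (\sigma^{0\tau})^2)(\pi^\tau(0))^2 \bigr] ds \\
&\quad + \sigma^\tau \pi^\tau(0) W_t + \sigma^{0\tau} \pi^\tau(0) W^0_t + \int_{[0,t]\times E} \log\bigl(1 + \pi^\tau(\zeta^\tau(e))\eta^\tau(e)\bigr) N(ds,de).
\end{align*}

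The first three (time-)deterministic terms in $\tau$ are $\sigma(\tau)$-measurable and, by construction of $\bar{\Omega} = \mathtt{T}\times \Omega$, independent of $\mathcal{F}^0_T$; hence their conditional expectations reduce to $\mathbb{T}$-integrals, yielding $\log \bar{x}_0 + t\,\overline{\tau\pi}$. The $W$-term vanishes under $\bar{\mathbb{E}}[\,\cdot\,|\mathcal{F}^0_T]$ because $W$ is independent of $\sigma(\tau)\vee \mathcal{F}^0_T$ with mean zero, while for the $W^0$-term one takes $W^0_t$ out of the conditional expectation (it is $\mathcal{F}^0_T$-measurable) and averages the $\sigma(\tau)$-measurable prefactor over $\mathbb{T}$ to obtain $\overline{\sigma^0\pi}\,W^0_t$. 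The integrability requirements of \ref{itm:T4} (boundedness of $|r^\mathtt{t}|$, $|\kappa^\mathtt{t}|$, $\sigma^\mathtt{t}$, $\sigma^{0\mathtt{t}}$, and $\log x_0^\mathtt{t} \in L^1(\mathbb{T})$) together with boundedness of $\pi^{\mathtt{t}}(0)$ on $\bar\Phi^{\mathtt{t}}(0)$ justify all of these steps via Fubini.

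The main step, and the only one requiring care, is the jump integral
\[
J_t := \int_{[0,t]\times E} \log\bigl(1 + \pi^\tau(\zeta^\tau(e^{\mathtt{c}},e^{\mathtt{i}}))\eta^\tau(e^{\mathtt{c}},e^{\mathtt{i}}))\bigr) N(ds,de^{\mathtt{c}},de^{\mathtt{i}}).
\]
Here I rely on the product structure $\nu = \lambda (\nu^{\mathtt{c}}\otimes \nu^{\mathtt{i}})$: by the marking theorem for Poisson random measures, conditionally on the coarser measure $N(\cdot,\cdot,E^{\mathtt{i}})$ (which records jump times $t_k$ and common marks $e^{\mathtt{c}}_k$), the idiosyncratic marks $(e^{\mathtt{i}}_k)_k$ are i.i.d.\ with law $\nu^{\mathtt{i}}$. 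Since $\tau$ is independent of $N$ and $W^0$, the conditional distribution of the pair $(\tau,e^{\mathtt{i}}_k)$ given $\mathcal{F}^0_T$ is $\mathbb{T}\otimes \nu^{\mathtt{i}}$, and so
\begin{align*}
\bar{\mathbb{E}}[J_t \mid \mathcal{F}^0_T] &= \sum_{t_k\leq t} \int_{\mathtt{T}\times E^{\mathtt{i}}} \log\bigl(1 + \pi^\mathtt{t}(\zeta^\mathtt{t}(e^{\mathtt{c}}_k,e^{\mathtt{i}}))\eta^\mathtt{t}(e^{\mathtt{c}}_k,e^{\mathtt{i}}))\bigr) \mathbb{T}(d\mathtt{t})\otimes \nu^{\mathtt{i}}(de^{\mathtt{i}}) \\
&= \int_{[0,t]\times E^{\mathtt{c}}} \log \overline{1+(\pi\circ\zeta)\eta}(e^{\mathtt{c}})\, N(ds,de^{\mathtt{c}},E^{\mathtt{i}}).
\end{align*}
Interchanging sum and expectation/integrals is legitimate because there are only finitely many jumps on $[0,T]$ almost surely and each summand has a uniform bound stemming from Lemma~\ref{lemma:mf_lower_bound} (lower bound) and from $\pi^\mathtt{t}$ being uniformly bounded on $\bar\Phi^{\mathtt{t}}$ together with \ref{itm:T6} or the uniform upper bound on $\eta^\mathtt{t}$ in \ref{itm:T6} (upper bound). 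Assembling all terms and exponentiating yields formula \eqref{eq:closed_from_y}; a direct comparison with the expression established in Lemma~\ref{lemma:limit} shows that the two coincide.
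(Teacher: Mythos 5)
Your proposal is correct and follows essentially the same route as the paper: apply It\^o's formula to obtain $\log X^\pi_t$, then take $\bar{\mathbb{E}}[\,\cdot\,|\mathcal{F}^0_\cdot]$ term by term, using that $\tau$ is independent of the drivers, that $W$ averages out, that $W^0$ is $\mathcal{F}^0$-measurable, and that conditionally on the jump times and common marks the idiosyncratic marks are i.i.d.\ with law $\nu^{\mathtt{i}}$ so the jump sum averages over $\mathbb{T}\otimes\nu^{\mathtt{i}}$. The only cosmetic difference is that the paper first replaces conditioning on $\mathcal{F}^0_T$ by conditioning on $\mathcal{F}^0_t$, which your argument handles implicitly.
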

\begin{proof}
	Let $\pi = \pi^\tau \in \bar{\mathcal{A}}_\text{sig}$. We note that
	 \[ \bar{X}^\pi_t = \exp \bar{\mathbb{E}}[\log X^\pi_t | \mathcal{F}^0_T]  = \exp \bar{\mathbb{E}}[\log X^\pi_t | \mathcal{F}^0_t], \]
where the second equality holds because $X^\pi_t$ is independent of $(N([t,s),de))_{s \in [t,T]}$ and $(W^0_s-W^0_t)_{s \in [t,T]}$. By It\^{o}'s formula, we easily find that
\begin{align*}
        \log X^{\pi}_t = \log(x_0^\tau) &+  \int_0^t \big[ (r+\pi^\tau(0)(\kappa-r)) - \tfrac{1}{2} (\sigma^2+(\sigma^0)^2)(\pi^\tau(0))^2\big]  ds +  \int_0^t\sigma \pi^\tau(0)dW_s   \\
            &+  \int_0^t \sigma^0 \pi^\tau(0)dW^0_s + \int_{[0,t] \times E} \log\big( 1+ \pi^\tau(\zeta^\tau(e)) \eta^\tau(e) \big) N(ds,de),
    \end{align*}
where, since $\pi^\tau$ is signal-driven, the strategies do not depend on $t$ and $\omega$ but on the signal. In particular, $\pi^\tau(z)$ is $\sigma(\tau)$-measurable for $z \in \mathbb{R}$. This is important, as in the following, we take conditional expectation with respect to the common noise $\mathcal{F}^0_t$ and then apply the exponential:

First, for the initial value, we find that
	\[ \bar{x}_0 := \bar{X}^\pi_0 = \exp \bar{\mathbb{E}}[\log x_0^\tau | \mathcal{F}^0_t] = \exp \int_{\mathtt{T}} \log (x_0^\mathtt{t} ) \mathbb{T}(d\mathtt{t}) ,\]
because $x_0^{\tau}$, being $\sigma(\tau)$-measurable, is independent from the common noise. Similarly, for the drift term, we exploit that the lifted common noise drivers $W^0$ and $N$ actually only depend on $\omega$, while the market parameters and $\pi^\tau(0)$ only depend on the type $\mathtt{t}$, thus, the $ds$-integrand averages to
    \begin{align*}
        \bar{\mathbb{E}}&\left[ r+\pi^\tau(0)(\kappa-r) - \frac{1}{2} (\sigma^2+(\sigma^0)^2)(\pi^\tau(0))^2 \big| \mathcal{F}^0_t \right] \\
        &\quad = \int_{\mathtt{T}} r^\mathtt{t}+\pi^\mathtt{t}(0)(\kappa^\mathtt{t}-r^\mathtt{t}) - \frac{1}{2} ((\sigma^\mathtt{t})^2+(\sigma^{0\mathtt{t}})^2)(\pi^\mathtt{t}(0))^2 \mathbb{T}(d\mathtt{t}) = \overline{\tau\pi}. 
    \end{align*}
Further, since $W$, $\sigma^0$ and $\pi^\tau(0)$ are independent from the common noise drivers, we find that when taking conditional expectation with respect to $\mathcal{F}^0_t$ the $dW$-term vanishes as a martingale with zero expectation. However, for the $dW^0$-term, randomness remains in the types of the integrand, that is
	\[ \bar{\mathbb{E}}\left[ \int_0^t \sigma^0 \pi^\tau(0)dW^0_s \big| \mathcal{F}^0_t \right] = \int_0^t \bar{\mathbb{E}}\left[ \sigma^0 \pi^\tau(0) | \mathcal{F}^0_t \right] dW^0_s = \int_0^t \overline{\sigma^0\pi} dW^0_s \]
	for 
		\[\bar{\mathbb{E}}[\sigma^0 \pi^\tau(0)| \mathcal{F}^0_t] = \int_{\mathtt{T}} \sigma^{0\mathtt{t}} \pi^{\mathtt{t}}(0) \mathbb{T}(d\mathtt{t}) =  \overline{\sigma^0\pi}. \]
Finally, for the Poisson integral, we have to average over the types and the idiosyncratic jump marks, that is
	\begin{align*}
		\bar{\mathbb{E}}\bigg[ \int_{[0,t] \times E} &\log\big( 1+ \pi^\tau(\zeta^\tau(e)) \eta^\tau(e) \big) N(ds,de)  \bigg| \mathcal{F}^0_t \bigg] = \bar{\mathbb{E}}\left[ \sum_{t_k \leq t} \log\big( 1+ \pi^{\tau}(\zeta^{\tau}(e^{\mathtt{c}}_k,e^{\mathtt{i}}_k)) \eta^{\tau}(e^{\mathtt{c}}_k,e^{\mathtt{i}}_k) \big) \bigg| (t_k)_k, (e^{\mathtt{c}}_k)_k \right]\\
		&= \sum_{t_k \leq t} \int_{E^{\mathtt{i}}\times \mathtt{T}} \log\big( 1+ \pi^{\mathtt{t}}(\zeta^{\mathtt{t}}(e^{\mathtt{c}}_k,e^{\mathtt{i}})) \eta^{\mathtt{t}}(e^{\mathtt{c}}_k,e^{\mathtt{i}}) \big) \nu^{\mathtt{i}}(de^{\mathtt{i}}) \otimes \mathbb{T}(d\mathtt{t})\\
		&= \int_{[0,t]\times E^{\mathtt{c}}} \bigg( \int_{E^{\mathtt{i}}\times \mathtt{T}} \log\big( 1+ \pi^{\mathtt{t}}(\zeta^{\mathtt{t}}(e^{\mathtt{c}},e^{\mathtt{i}})) \eta^{\mathtt{t}}(e^{\mathtt{c}},e^{\mathtt{i}}) \big) \nu^{\mathtt{i}}(de^{\mathtt{i}}) \otimes \mathbb{T}(d\mathtt{t}) \bigg) N(ds,de^{\mathtt{c}},E^{\mathtt{i}})
	\end{align*}
	for the mark times $t_k$ and common and idiosyncratic jump marks $(e^{\mathtt{c}}_k,e^{\mathtt{i}}_k) \in E^{\mathtt{c}}\times E^{\mathtt{i}}$, $k=1,2,\dots$, set by the Poisson random measure $N$. Together with \eqref{eq:mean_jumps} and taking the exponential, we find \eqref{eq:closed_from_y} which coincides with the limit from Lemma \ref{lemma:limit}.
\end{proof}

As we will see soon, the above geometric mean field wealth works as a sufficient statistic of the control's environment enabling us to bypass the so-called master equation when considering best responses.\\

Let us fix some $\pi \in \bar{\mathcal{A}}_{\text{sig}}$. The geometric mean wealth is defined as $\bar{X}^\pi_T = \exp \bar{\mathbb{E}}[ \log X_T^\pi | \mathcal{F}^0_T]$. Then, the representative investor's best response problem is to
\begin{equation*}
	\text{ maximize } \bar{\mathbb{E}}[u(X^{\hat{\phi}}_T,\bar{X}^\pi_T)] = \bar{\mathbb{E}}\bigg[ \frac{(X^{\hat{\phi}}_T (\bar{X}^\pi_T)^{-\theta})^{1-\alpha}}{1-\alpha}\bigg] \text{ over } \hat{\phi}_t \in \bar{\mathcal{A}}
\end{equation*}
subject to her wealth process $X^{\hat{\phi}}$ from \eqref{eq:mean_field_wealth}. Here, $u=u_{\tau}$ is the utility function dependent on the type $\tau$, which almost surely behaves as the utility defined in \eqref{eq:utility_function}, i.e. 
	\[ u(\bar{\omega},x,\bar{x}):=u_{\tau(\bar{\omega})}(x,\overline{x})=u_{\mathtt{t}}(x,\overline{x})=u_{\alpha^{\mathtt{t}}}(x\bar{x}^{-\theta^{\mathtt{t}}}) \text{ for } \bar{\omega}=(\mathtt{t},\omega) \in \mathtt{T} \times \Omega = \bar{\Omega} . \]

Ultimately, the consistency condition characterizing a mean-field equilibrium requires that the best response 
\[
\hat{\phi}^{\star}_{\hat{\pi}^\star} := \argmax_{\hat{\phi}} \bar{\mathbb{E}}\big[u(X^{\hat{\phi}}_T,\bar{X}^{\hat{\pi}^\star}_T)\big]
\]
to the geometric mean wealth $\bar{X}^{\hat{\pi}^\star}$ induced by an equilibrium strategy $\hat{\pi}^\star \in \bar{\mathcal{A}}$ satisfies
\[ \bar{X}^{\hat{\pi}^\star}_T = \exp \big(\bar{\mathbb{E}}[\log X^{\hat{\phi}^{\star}_{\hat{\pi}^\star}}_T \mid \mathcal{F}^0_T]\big).
\]
As we will see, our framework yields a unique best response to any given environment. Hence, this consistency requirement can be thought of as or is at least directly implied by the best response coinciding precisely with the environment-generating strategy almost everywhere, i.e., $\hat{\phi}^{\star}_{\hat{\pi}^\star} = \hat{\pi}^\star$. We will exploit this sufficiency condition to establish the existence of an equilibrium in the subsequent proofs.

\begin{definition}\label{defn:mf_equilibrium}
    Let $\hat{\pi}^{\star} \in \bar{\mathcal{A}}$ be an admissible strategy and consider $\bar{X}^{\hat{\pi}^\star}_T=\exp \bar{\mathbb{E}}[\log X^{\hat{\pi}^{\star}}_T| \mathcal{F}^0_T]$, where $X^{\hat{\pi}^{\star}}$ is the wealth process corresponding to $\hat{\pi}^{\star}$. We say that $\hat{\pi}^{\star} $ is a \textit{mean field equilibrium} if
    \begin{equation}\label{eq:optimality_mf}
        \bar{\mathbb{E}}[u(X^{\hat{\pi}}_T,\bar{X}_T^{\hat{\pi}^\star}) ] \leq \bar{\mathbb{E}}[u(X^{\hat{\pi}^{\star}}_T,\bar{X}^{\hat{\pi}^\star}_T) ]  \text{ for all } \hat{\pi} \in \bar{\mathcal{A}}. 
    \end{equation}
    A mean field equilibrium $\pi^{\star}$ is called \textit{signal-driven} if  $\pi^{\star} \in \bar{\mathcal{A}}_{\text{sig}}$ is signal-driven in the sense of Definition~\ref{defn:signal-driven_strategy_mf}.
\end{definition}

As mentioned, we first solve the best response problem of the representative investor.

\subsection{HJB equation and Best Response Map for the Mean Field Game}\label{sec:mean_field_HJB} Let $\pi \in \bar{\mathcal{A}}_{\text{sig}}$ describe the investment decisions made in the mean field and consider the resulting geometric mean wealth $\bar{X}^\pi_T = \exp \bar{\mathbb{E}}[ \log X_T^\pi | \mathcal{F}^0_T]$ from \eqref{eq:closed_from_y}.

The following lemma collects some technical results ensuring, in particular, that the mean wealth behaves `nicely'. 

\begin{lemma}\label{lemma:mf_tools}
	We find that
	\begin{enumerate}
        \item[(i)]\namedlabel{itm:(i)}{(\textit{i})} signal-driven investment positions are uniformly bounded: There exists a constant $C>0$ such that for all $\pi \in \bar{\mathcal{A}}_{\text{sig}}$ and for $\mathbb{T}$-almost every $\mathtt{t} \in \mathtt{T}$
            \[ \max_{z \in \mathbb{R}} |\pi^\mathtt{t}(z)| \leq C. \]
		\item[(ii)]\namedlabel{itm:(ii)}{(\textit{ii})} there exists a constant $\bar{c}>0$ such that $0 < \bar{c} \le \nu^{\mathtt{c}}(de^{\mathtt{c}})\text{-}\essinf \overline{1+ (\pi \circ \zeta)\eta}(e^{\mathtt{c}})$ for all $\pi \in \bar{\mathcal{A}}_{\text{sig}}$,
		\item[(iii)]\namedlabel{itm:(iii)}{(\textit{iii})} for all $p>0$ there exists a $L^p(\nu^c)$-integrable function $f:E^c \to \mathbb{R}$ such that for all $\pi \in \bar{\mathcal{A}}_{\text{sig}}$, we have
   	 		\[  \overline{1+ (\pi \circ \zeta)\eta}(e^{\mathtt{c}})  \leq f(e^c).\]
	\end{enumerate}
	Further, from \ref{itm:(ii)} and \ref{itm:(iii)} it follows that for $\mathbb{T}$-almost every $\mathtt{t} \in \mathtt{T}$, for all $z \in \mathbb{R}$ and any $\varphi \in \Phi^\mathtt{t}(z)$ there exists a $\nu$-integrable function $g:E \to \mathbb{R}$ such that for all $\pi \in \bar{\mathcal{A}}_{\text{sig}}$ we have
    \begin{equation}\tag{\textit{iv}}\namedlabel{eq:(iv)}{(\textit{iv})}
            \begin{split}
                 \left| u_{\mathtt{t}}(1+ \varphi \eta^\mathtt{t}(e^{\mathtt{c}},e^{\mathtt{i}}),\overline{1+ (\pi \circ \zeta)\eta}(e^{\mathtt{c}})) \right| \leq g(e).
            \end{split}
        \end{equation}
\end{lemma}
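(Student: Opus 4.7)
The plan is to establish the four claims in the stated order, with each building on its predecessors. For~\ref{itm:(i)}, observe that the endpoints of $\bar{\Phi}^{\mathtt{t}}(z)$ are of the form $-(1-a^{\mathtt{t}})/\bar{\eta}^{\mathtt{t}}(z)$ and $-(1-b^{\mathtt{t}})/\underline{\eta}^{\mathtt{t}}(z)$ (with type-independent $a,b$ in the infinitely-supported case). Since $a^{\mathtt{t}}, b^{\mathtt{t}} \in (0,1]$ and since Assumption~\ref{itm:T5} forces $\bar{\eta}^{\mathtt{t}}(z)\geq c_0$ and $|\underline{\eta}^{\mathtt{t}}(z)|\geq c_0$ uniformly in $z$ for $\mathbb{T}$-almost every $\mathtt{t}$, one reads off $|\pi^{\mathtt{t}}(z)|\leq 1/c_0=:C$ for every $\pi\in\bar{\mathcal{A}}_{\text{sig}}$. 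For~\ref{itm:(ii)}, Lemma~\ref{lemma:mf_lower_bound} already furnishes a strictly positive lower bound $c>0$ on $1+\pi^{\mathtt{t}}(\zeta^{\mathtt{t}}(e))\eta^{\mathtt{t}}(e)$ uniform in $\pi$, $\mathtt{t}$, and $e$. Applying $\log$ preserves this inequality, and averaging against the probability measure $\nu^{\mathtt{i}}\otimes\mathbb{T}$ keeps the bound $\log c$ inside the exponential defining $\overline{1+(\pi\circ\zeta)\eta}(e^{\mathtt{c}})$, so exponentiating yields the claim with $\bar{c}=c$.

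For~\ref{itm:(iii)}, the central step is Jensen's inequality applied to the concave logarithm against the probability measure $\nu^{\mathtt{i}}\otimes\mathbb{T}$, giving
\[
\overline{1+(\pi\circ\zeta)\eta}(e^{\mathtt{c}}) \leq \int_{\mathtt{T}\times E^{\mathtt{i}}} \bigl(1+\pi^{\mathtt{t}}(\zeta^{\mathtt{t}}(e^{\mathtt{c}},e^{\mathtt{i}}))\eta^{\mathtt{t}}(e^{\mathtt{c}},e^{\mathtt{i}})\bigr)\,\nu^{\mathtt{i}}(de^{\mathtt{i}})\otimes\mathbb{T}(d\mathtt{t}) \leq 1 + C\int_{\mathtt{T}\times E^{\mathtt{i}}} |\eta^{\mathtt{t}}|\,d(\nu^{\mathtt{i}}\otimes\mathbb{T}),
\]
where~\ref{itm:(i)} enters in the last inequality. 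I would then split along Assumption~\ref{itm:T6}: in the finitely-supported case, Fubini combined with $|\eta^{\mathtt{t}}|^p \leq 1+(1+\eta^{\mathtt{t}})^p$ (which uses $\eta^{\mathtt{t}}\geq -1$ from~\ref{itm:T1}) reduces the desired $L^p(\nu^{\mathtt{c}})$ bound to the $\nu$-integrability of $(1+\eta^{\mathtt{t}})^p$, which~\ref{itm:T6} provides; in the infinitely-supported case,~\ref{itm:T6} yields $\eta^{\mathtt{t}}$ uniformly bounded above, so together with $\eta^{\mathtt{t}}\geq -1$ the dominating function reduces to a constant that lies in every $L^p(\nu^{\mathtt{c}})$.

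For~\ref{eq:(iv)}, factor the utility estimate as
\[
\bigl| u_{\mathtt{t}}\bigl(1+\varphi\eta^{\mathtt{t}}(e),\overline{1+(\pi\circ\zeta)\eta}(e^{\mathtt{c}})\bigr) \bigr| \leq \tfrac{1}{|1-\alpha^{\mathtt{t}}|}\bigl(1+\varphi\eta^{\mathtt{t}}(e)\bigr)^{1-\alpha^{\mathtt{t}}}\,\overline{1+(\pi\circ\zeta)\eta}(e^{\mathtt{c}})^{-\theta^{\mathtt{t}}(1-\alpha^{\mathtt{t}})}
\]
and separate cases on the sign of $1-\alpha^{\mathtt{t}}$. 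For $\alpha^{\mathtt{t}}\in(0,1)$, the mean-field exponent is non-positive, so~\ref{itm:(ii)} bounds that factor by $\max(1,\bar{c}^{-1})$ since $\theta^{\mathtt{t}}(1-\alpha^{\mathtt{t}})\in[0,1]$; the jump factor is then dominated, for fixed $\varphi$, by a $\nu$-integrable multiple of $(1+|\eta^{\mathtt{t}}|)^{1-\alpha^{\mathtt{t}}}$ via the same finite/infinite type split used in~\ref{itm:(iii)}. For $\alpha^{\mathtt{t}}>1$, the mean-field exponent is non-negative and~\ref{itm:(iii)} together with the boundedness of $\alpha^{\mathtt{t}}$ from~\ref{itm:T4} delivers an $L^p(\nu^{\mathtt{c}})$ majorant of the form $f(e^{\mathtt{c}})^{\theta^{\mathtt{t}}(\alpha^{\mathtt{t}}-1)}$. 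The jump factor $(1+\varphi\eta^{\mathtt{t}})^{-(\alpha^{\mathtt{t}}-1)}$ is bounded from above because $\varphi$ lying in the interior of $\Phi^{\mathtt{t}}(z)$ (and for signal-driven $\varphi\in\bar{\Phi}^{\mathtt{t}}(z)$, directly by Lemma~\ref{lemma:mf_lower_bound}) keeps $1+\varphi\eta^{\mathtt{t}}$ uniformly away from zero. The main obstacle I anticipate is precisely this $\alpha^{\mathtt{t}}>1$ regime, where the negative wealth exponent together with the required uniformity in $\pi\in\bar{\mathcal{A}}_{\text{sig}}$ forces one to orchestrate~\ref{itm:(ii)},~\ref{itm:(iii)}, Lemma~\ref{lemma:mf_lower_bound}, and the boundedness from~\ref{itm:T4} in concert.
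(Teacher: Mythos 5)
Your proof is correct and follows essentially the same route as the paper's: \ref{itm:(i)} is read off from Assumption~\ref{itm:T5} and the form of the admissible position intervals, \ref{itm:(ii)} from Lemma~\ref{lemma:mf_lower_bound}, \ref{itm:(iii)} from Jensen's inequality combined with \ref{itm:(i)} and \ref{itm:T6}, and \ref{eq:(iv)} from the same factorization of the utility bound with the denominator controlled by the lower bounds and the numerator by integrability. If anything, your treatment is slightly more careful than the paper's, e.g.\ in making the $\alpha^{\mathtt{t}}\in(0,1)$ versus $\alpha^{\mathtt{t}}>1$ case split explicit and in flagging that $\varphi$ must stay in $\bar{\Phi}^{\mathtt{t}}(z)$ (or the interior of $\Phi^{\mathtt{t}}(z)$) for the jump factor to remain bounded when $\alpha^{\mathtt{t}}>1$.
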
 

Note that, by \ref{itm:(ii)}, the mean wealth dynamics in \eqref{eq:closed_from_y} remain strictly positive. Also, \ref{itm:(i)} together with \ref{itm:T4}, ensures that $\overline{\tau\pi}$ is finite and $\sigma^0\pi(0)$ is uniformly bounded with respect to $\mathbb{T}$ independent from the choice of $\pi$. Further, \ref{itm:(iii)} and \eqref{eq:(iv)} ensure finiteness of the suprema of the HJB equation below, see \eqref{eq:HJB_MF}, as well as sufficient continuity of the target functions of the best response maps, see \eqref{eq:mf_optimal_zero} and \eqref{eq:mf_optimal_nonzero}, and, later, also continuity of the fixed point maps in the proof of existence of mean field equilibria.

\begin{proof}
	Let $\pi \in \bar{\mathcal{A}}_{\text{sig}}$. 
    For \ref{itm:(i)}, it is straightforward that \ref{itm:T5} bounds signal-driven investment positions uniformly in types and signals independently from $\pi$ as
    \begin{equation*}
	   \max_{z \in \mathbb{R}} |\pi^\mathtt{t}(z)| \leq  \max_{z \in \mathbb{R}} \max\left\{ \bigg|\frac{1}{\bar{\eta}^\mathtt{t}(z)}\bigg|, \bigg|\frac{1}{\underline{\eta}^\mathtt{t}(z)}\bigg| \right\} \leq |(c_0)^{-1}|.
    \end{equation*}
    
	For \ref{itm:(ii)}, it suffices to show that there exists $\tilde{c} \in \mathbb{R}$ independent from $\pi$ such that for all $e^{\mathtt{c}} \in E^{\mathtt{c}}$, we have that
		\[ -\infty < \tilde{c} \leq (\nu^{\mathtt{i}}(de^{\mathtt{i}}) \otimes \mathbb{T}(d\mathtt{t}))\text{-}\essinf\big(\log( 1+ \pi^{\mathtt{t}}(\zeta^{\mathtt{t}}(e^{\mathtt{c}},e^{\mathtt{i}})) \eta^{\mathtt{t}}(e^{\mathtt{c}},e^{\mathtt{i}})) \big),  \]
	because then the exponential $\overline{1+ (\pi \circ \zeta)\eta}$ is uniformly bounded away from zero on $E^{\mathtt{c}}$; and indeed, this follows immediately from Lemma \ref{lemma:mf_lower_bound}.
	
	For \ref{itm:(iii)}, observe that for all $e^\mathtt{c} \in E^\mathtt{c}$, by \ref{itm:(i)},  we find
		\begin{align*}
			\overline{1+ (\pi \circ \zeta)\eta}(e^{\mathtt{c}}) &\le \int_{\mathtt{T}\times E^{\mathtt{i}}} \big( 1+ \pi^{\mathtt{t}}(\zeta^{\mathtt{t}}(e^{\mathtt{c}},e^{\mathtt{i}})) \eta^{\mathtt{t}}(e^{\mathtt{c}},e^{\mathtt{i}}) \big) \nu^{\mathtt{i}}(de^{\mathtt{i}}) \otimes \mathbb{T}(d\mathtt{t})\\
            &\le (1+|(c_0)^{-1}|) \int_{\mathtt{T}\times E^{\mathtt{i}}} (1+|\eta^{\mathtt{t}}(e^{\mathtt{c}},e^{\mathtt{i}})|) \nu^{\mathtt{i}}(de^{\mathtt{i}}) \otimes \mathbb{T}(d\mathtt{t})
		\end{align*}
	where the right-hand side is independent of $\pi$ and implies existence of $f$ as above by \ref{itm:T6}.
	
	Lastly, concerning \eqref{eq:(iv)}, we find that
		\begin{align*}
			\begin{split}
                 \left| u_{\mathtt{t}}(1+ \varphi \eta^\mathtt{t}(e^{\mathtt{c}},e^{\mathtt{i}}),\overline{1+ (\pi \circ \zeta)\eta}(e^{\mathtt{c}})) \right| \leq \frac{(1+\varphi \eta^\mathtt{t}(e))^{1-\alpha^\mathtt{t}}}{|1-\alpha^\mathtt{t}|}\big( \overline{1+ (\pi \circ \zeta)\eta}(e^{\mathtt{c}}) \big)^{-\theta^\mathtt{t}(1-\alpha^\mathtt{t})}
            \end{split}
		\end{align*}
    where both for $\alpha^\mathtt{t} \in (0,1)$ and $\alpha^\mathtt{t} \in (1,\infty)$ the right-hand side exhibits a denominator which is uniformly bounded away from zero by Lemma \ref{lemma:mf_lower_bound} and \ref{itm:(ii)}, and a numerator which, by \ref{itm:T6} together with \ref{itm:(i)} and \ref{itm:(iii)}, is bounded from above by a $\nu$-integrable function which is independent from $\pi$.
\end{proof}

Returning to our initial goal, for given $\pi \in \bar{\mathcal{A}}_{\text{sig}}$, we search the best response $\hat{\phi}^\star \in \bar{\mathcal{A}}$ determined by the problem to 
    \begin{equation}\label{eq:equivalent_optimization_problem_mf}
        \text{maximize } \bar{\mathbb{E}}[u(X^{\hat{\phi}}_T,\bar{X}^\pi_T)] \text{ over } \hat{\phi} \in \bar{\mathcal{A}}.
    \end{equation}
To make this problem amenable to dynamic programming we write
	\[ \bar{\mathbb{E}}[u(X^{\hat{\phi}}_T,\bar{X}^\pi_T)] = \int_{\mathtt{T}} \mathbb{E}[u_\mathtt{t}(X^{\hat{\phi}^\mathtt{t}}_T,\bar{X}^\pi_T)] \mathbb{T}(d\mathtt{t})\]
and introduce the value function for the representative investor's problem when she is of type $\mathtt{t} \in \mathtt{T}$:
	 \begin{equation}\label{eq:value_function_mf}
           v^{\mathtt{t}}(x,\bar{x},T) := \sup_{\hat{\phi} \in \bar{\mathcal{A}}^{\mathtt{t}}} \mathbb{E}[u_{\mathtt{t}}(X^{\hat{\phi}}_T,\bar{X}^\pi_T) ].
        \end{equation}
Here we use the type-dependent set of admissible strategies
	\begin{equation*}
		\bar{\mathcal{A}}^{\mathtt{t}} := \{ \hat{\phi}(Z^{\mathtt{t}}) \text{ for some $\mathcal{P}(\mathcal{F}) \otimes \mathcal{B}(\mathbb{R})$-measurable field $\hat{\phi}$ with } \hat{\phi}_t(z) \in \bar{\Phi}^{\mathtt{t}}(z) \text{ for  all } z \in \mathbb{R},\ t \in [0,T] \},
    \end{equation*}
where the filtration $\mathcal{F}=(\mathcal{F}_t)_{t \in [0,T]}$ on $\Omega$ is induced by the completions of \[ \sigma((W^0_s,W_s, N([0,s]\times A)): s \in [0,t], A \in \mathcal{E}), \quad t \in [0,T],\] 
and for the resulting wealth dynamics:
    \begin{equation}\label{eq:dynamics_x}
    \begin{split}
    	X^{\hat{\phi}}_0 &= x_0^{\mathtt{t}} \\
    	\frac{dX_t^{\hat{\phi}}}{X^{\hat{\phi}}_{t-}} &= (r^\mathtt{t}+\hat{\phi}_t(0)(\kappa^{\mathtt{t}}-r^\mathtt{t}))dt +  \sigma^{\mathtt{t}} \hat{\phi}_t(0) dW_t + \sigma^{0\mathtt{t}} \hat{\phi}_t(0) dW^0_t + \int_E \hat{\phi}_t(\zeta^{\mathtt{t}}(e))\eta^{\mathtt{t}}(e) N(dt,de).
    \end{split}
    \end{equation}
We will later also consider the type-dependent set of signal-driven admissible strategies which we define to be
	\begin{align*}
		\bar{\mathcal{A}}^{\mathtt{t}}_{\text{sig}} := \{ \hat{\phi} \in \bar{\mathcal{A}}^{\mathtt{t}} \text{ such that } \hat{\phi}_t(\omega,Z^{\mathtt{t}}_t(\omega)) = \phi(Z^{\mathtt{t}}_t(\omega)) \text{ for } (\omega,t) \in \Omega \times [0,T] &\\ 
        \text{ for some }   & \text{$\mathcal{B}(\mathbb{R})$-measurable } \phi : \mathbb{R} \to \mathbb{R} \}.
    \end{align*}

Using the wealth dynamics~\eqref{eq:closed_from_y} and~\eqref{eq:dynamics_x}, we can follow the same heuristics as in the multi-agent game to derive the corresponding HJB equation:
    \begin{equation}\label{eq:HJB_MF}
    \begin{split}
        &-\partial_T v^{\mathtt{t}}(x,\bar{x},T) + \partial_x v^{\mathtt{t}}(x,\bar{x},T)xr^\mathtt{t} + \partial_y v^{\mathtt{t}}(x,\bar{x},T)y \big( \overline{\tau\pi} + \tfrac{1}{2}(\overline{\sigma^0 \pi})^2 \big)  + \tfrac{1}{2} \partial_{yy} v^{\mathtt{t}}(x,\bar{x},T) y^2 (\overline{\sigma^0 \pi})^2\\
        &\quad + \sup_{\varphi \in \bar{\Phi}^{\mathtt{t}}(0)} \bigg\{ \partial_x v^{\mathtt{t}}(x,\bar{x},T)x \varphi(\kappa^{\mathtt{t}}-r^\mathtt{t}) + \tfrac{1}{2} \partial_{xx} v^{\mathtt{t}}(x,\bar{x},T) x^2 \varphi^2 ((\sigma^{\mathtt{t}})^2+(\sigma^{0\mathtt{t}})^2) +\partial_{xy} v^{\mathtt{t}}(x,\bar{x},T) x y \sigma^{0\mathtt{t}}\varphi \overline{\sigma^0 \pi}  \\
        &\quad \quad \quad \quad \quad \quad \quad \quad + \int_{\{\zeta^{\mathtt{t}} = 0 \}}  \bigg[ v^{\mathtt{t}}(x(1+\varphi \eta^{\mathtt{t}}(e)),\bar{x}(\overline{1+ (\pi \circ \zeta)\eta}(e)),T) - v^{\mathtt{t}}(x,\bar{x},T) \bigg] \nu(de)  \bigg\}  \\
        &\quad + \int_{E\setminus \{0\}} \sup_{\varphi \in \bar{\Phi}^{\mathtt{t}}(z)} \bigg\{ \int_{\{\zeta^{\mathtt{t}} = z \}}\bigg[ v^{\mathtt{t}}(x(1+\varphi \eta(e)),\bar{x}(\overline{1+ (\pi \circ \zeta)\eta}(e)),T) - v^{\mathtt{t}}(x,\bar{x},T) \bigg] K^{\mathtt{t}}(z,de) \bigg\} \mu^{\mathtt{t}}(dz) = 0.
    \end{split}
    \end{equation}
together with the boundary condition
    	\[ v^{\mathtt{t}}(x,\bar{x},0)= u_{\mathtt{t}}(x,\bar{x}) = u_{\alpha^{\mathtt{t}}}(x\bar{x}^{-\theta^{\mathtt{t}}}),\]
and where, for simplicity of notation, we write $\overline{1+ (\pi \circ \zeta)\eta}(e):=\overline{1+ (\pi \circ \zeta)\eta}(e^{\mathtt{c}})$ for $e=(e^{\mathtt{c}},e^{\mathtt{i}}) \in E$, cf. \eqref{eq:mean_jumps}. Moreover, we provide the following verification result:
  
\begin{lemma}\label{lemma:verification_mf}
	Fix $\pi \in \bar{\mathcal{A}}_{\text{sig}}$. Then, for $\mathbb{T}$-almost every $\mathtt{t}$, the value function $v^{\mathtt{t}}$ of \eqref{eq:value_function_mf} is given by
	\begin{equation}\label{eq:value_fct_mf}
        v^{\mathtt{t}}(x,\bar{x},T) = u_{\alpha^{\mathtt{t}}}(x\bar{x}^{-\theta^{\mathtt{t}}})e^{T(1-\alpha^{\mathtt{t}})(r^\mathtt{t}+M^{\mathtt{t}}_{\pi})} \quad \text{ for }(t,x,\bar{x}) \in [0,T]\times (0,\infty)\times (0,\infty)
    \end{equation}
   	for the finite constant
   	\begin{equation}\label{eq:M_mf}
	\begin{split}
        M^{\mathtt{t}}_{\pi} &:=  -\theta^{\mathtt{t}} \overline{\tau\pi} + \tfrac{1}{2}(\theta^{\mathtt{t}})^2(1-\alpha^{\mathtt{t}})(\overline{\sigma^0 \pi})^2  \\
        & \quad \quad + \sup_{\varphi \in \bar{\Phi}^{\mathtt{t}}(0)} \bigg\{ \varphi(\kappa^{\mathtt{t}}-r^\mathtt{t}) - \tfrac{1}{2} \alpha^{\mathtt{t}} \varphi^2 ((\sigma^{\mathtt{t}})^2+(\sigma^{0\mathtt{t}})^2)  - \theta^{\mathtt{t}}(1-\alpha^{\mathtt{t}}) \sigma^{0\mathtt{t}}\varphi \overline{\sigma^0\pi}\\
        & \qquad \qquad \qquad \qquad \qquad \qquad +\int_{\{\zeta^{\mathtt{t}} = 0 \}} \bigg[ u_{\mathtt{t}}(1+\varphi \eta^{\mathtt{t}}(e),\overline{1+ (\pi \circ \zeta)\eta}(e)) - u_{\mathtt{t}}(1,1) \bigg] \nu(de)  \bigg\}  \\
        &\quad \quad +\int_{\zeta^{\mathtt{t}}(E)\setminus \{0\}} \sup_{\varphi \in \bar{\Phi}^{\mathtt{t}}(z)} \bigg\{ \int_{\{\zeta^{\mathtt{t}} = z \}}\bigg[ u_{\mathtt{t}}(1+\varphi \eta^{\mathtt{t}}(e),\overline{1+ (\pi \circ \zeta)\eta}(e) )  - u_{\mathtt{t}}(1,1) \bigg] K^{\mathtt{t}}(z,de) \bigg\}\mu^{\mathtt{t}}(dz).
    \end{split}
    \end{equation}
    Moreover, the value function $v^\mathtt{t}$ is indeed a smooth solution of the HJB equation \eqref{eq:HJB_MF} and the best response control is again a signal-driven strategy $\phi_\pi^{\mathtt{t},\star} \in \bar{\mathcal{A}}^\mathtt{t}_{\text{sig}}$: at times $t$ without a signal, i.e., when $z=Z^{\mathtt{t}}_t=0$, it is given by the default investment 
    \begin{equation}\label{eq:mf_optimal_zero}
        \begin{split}
            \phi_\pi^{\mathtt{t},\star}(0) &= \argmax_{\varphi \in \bar{\Phi}^\mathtt{t}(0)} \bigg\{\varphi(\kappa^\mathtt{t}-r^\mathtt{t}) - \tfrac{1}{2} \alpha^\mathtt{t} \varphi^2 ((\sigma^\mathtt{t})^2+(\sigma^{0\mathtt{t}})^2)  - \theta^\mathtt{t}(1-\alpha^\mathtt{t}) \sigma^{0\mathtt{t}}\varphi \overline{\sigma^0\pi} \\
            & \qquad \qquad \qquad \qquad \qquad \qquad + \int_{\{\zeta^\mathtt{t} = 0 \}} \bigg[ u_\mathtt{t}(1+\varphi \eta^\mathtt{t}(e),\overline{1+ (\pi \circ \zeta)\eta}(e) ) - u_{\mathtt{t}}(1,1) \bigg] \nu(de)  \bigg\},
        \end{split}
    \end{equation}
    and, at moments $t$ when receiving a signal $z=Z^{\mathtt{t}}_t\neq 0$, it is best to invest the fraction
    \begin{equation}\label{eq:mf_optimal_nonzero}
    	\phi_\pi^{\mathtt{t},\star}(z) = \argmax_{\varphi \in \bar{\Phi}^\mathtt{t}(z)} \bigg\{ \int_{\{\zeta^\mathtt{t} = z \}}\bigg[ u_{\mathtt{t}}(1+\varphi \eta^\mathtt{t}(e),\overline{1+ (\pi \circ \zeta)\eta}(e)) - u_{\mathtt{t}}(1,1) \bigg] K^\mathtt{t}(z,de) \bigg\}.
    \end{equation}    
\end{lemma}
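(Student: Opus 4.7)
The plan is to follow the verification argument used for Theorem~\ref{thm:verification_n}, adapted to the mean-field setting where the representative investor is fully described by her type $\mathtt{t}$, and where all statements need to hold only for $\mathbb{T}$-almost every $\mathtt{t}$ so that the Assumptions in~\ref{assumption:T} are in force. I would first establish that $M^\mathtt{t}_\pi$ is a finite constant. By~\ref{itm:T3}, $\mu^\mathtt{t}$ is concentrated on finitely many signal values, so finiteness of $M^\mathtt{t}_\pi$ reduces to showing that each inner supremum in~\eqref{eq:M_mf} is attained at a finite value. This follows from Lemma~\ref{lemma:mf_tools}~\ref{eq:(iv)}, which supplies a $\nu$-integrable majorant for the utility integrand uniformly in $\varphi \in \bar{\Phi}^\mathtt{t}(z)$, combined with continuity of the integrand in $\varphi$ over the compact set $\bar{\Phi}^\mathtt{t}(z)$. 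Strict concavity of each target functional in $\varphi$ then forces the $\argmax$ in~\eqref{eq:mf_optimal_zero} and~\eqref{eq:mf_optimal_nonzero} to be singletons, which legitimizes writing $=$ rather than $\in$.

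Next, a direct computation exploiting the homogeneity of $u_{\alpha^\mathtt{t}}(x\bar{x}^{-\theta^\mathtt{t}})$ in $(x,\bar{x})$ shows that the ansatz~\eqref{eq:value_fct_mf} defines a $C^{2,2,1}$ function on $(0,\infty)^2 \times [0,T]$ which satisfies the HJB equation~\eqref{eq:HJB_MF} with the boundary value $v^\mathtt{t}(x,\bar{x},0) = u_{\alpha^\mathtt{t}}(x\bar{x}^{-\theta^\mathtt{t}})$. Moreover, the drift-maximizing $\varphi$ inside each supremum of the HJB equation coincides, for the respective signal value, with the $\argmax$ appearing in~\eqref{eq:mf_optimal_zero} and~\eqref{eq:mf_optimal_nonzero}; this is immediate once the $x$ and $\bar{x}$ factors have been cancelled out.

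For the upper-bound half of the verification, fix $\tilde{\phi} = \hat{\phi}(Z^\mathtt{t}) \in \bar{\mathcal{A}}^\mathtt{t}$ and, for $\varepsilon > 0$, set $V^{\mathtt{t},\varepsilon}_t := v^\mathtt{t}(X^{\hat{\phi}}_t + \varepsilon, \bar{X}^\pi_t, T-t)$. It\^o's formula applied with the dynamics~\eqref{eq:dynamics_x} and~\eqref{eq:closed_from_y}, combined with the HJB equation, yields a nonpositive drift for $V^{\mathtt{t},\varepsilon}$. To upgrade this to the supermartingale property I need an integrable lower bound: for $\alpha^\mathtt{t} \in (0,1)$ this is trivial since $v^\mathtt{t} \geq 0$, while for $\alpha^\mathtt{t} > 1$ I would show $\mathbb{E}[\sup_{t \in [0,T]} (\bar{X}^\pi_t)^p] < \infty$ for every $p > 0$ by inspecting the closed form~\eqref{eq:closed_from_y}: Assumption~\ref{itm:T4} together with Lemma~\ref{lemma:mf_tools}~\ref{itm:(i)} bounds $\overline{\tau\pi}$ and $\overline{\sigma^0\pi}$, Brownian maxima have exponential moments of all orders, and Lemma~\ref{lemma:mf_tools}~\ref{itm:(iii)} together with the $\bar{\mathbb{P}}$-a.s.\ finiteness of the jump times on $[0,T]$ handles the Poisson integral. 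Taking expectations and sending $\varepsilon \downarrow 0$ produces the desired inequality $\mathbb{E}[u_\mathtt{t}(X^{\hat{\phi}}_T, \bar{X}^\pi_T)] \leq v^\mathtt{t}(x_0^\mathtt{t}, \bar{x}_0, T)$.

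The last step is to prove that $V^{\mathtt{t},\star}_t := v^\mathtt{t}(X^{\phi^{\mathtt{t},\star}_\pi}_t, \bar{X}^\pi_t, T-t)$ is a true martingale for the candidate $\phi^{\mathtt{t},\star}_\pi$, which yields the matching lower bound and hence attainment. By construction its drift vanishes and, after It\^o, $V^{\mathtt{t},\star}$ factorizes into independent Brownian and compensated-Poisson stochastic exponentials: the Brownian factors have deterministic integrands that are bounded by Lemma~\ref{lemma:mf_tools}~\ref{itm:(i)} and Assumption~\ref{itm:T4}, so they are genuine martingales, while for the compensated-Poisson factor the standard $L^2$-criterion (cf.~\cite[Theorem 33.2]{Sato}) reduces, as in the proof of Theorem~\ref{thm:verification_n}, to an integral majorized by $M^\mathtt{t}_\pi$, which has already been shown to be finite. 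The main technical obstacle, relative to the multi-agent proof, is the careful bookkeeping of common versus idiosyncratic noise in~\eqref{eq:closed_from_y} when bounding moments of $\bar{X}^\pi$ and when writing the correlation $\partial_{xy}v^\mathtt{t}$ term in the HJB equation; this is precisely what Lemma~\ref{lemma:mf_tools} has been tailored to absorb.
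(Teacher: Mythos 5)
Your proposal is correct and follows essentially the same route the paper intends: the authors explicitly omit this proof, stating that one should argue exactly as in the proof of Theorem~\ref{thm:verification_n} with Assumption~\ref{assumption:T} and Lemma~\ref{lemma:mf_tools} supplying the needed bounds, which is precisely what you carry out (finiteness of $M^{\mathtt{t}}_{\pi}$ via \ref{itm:T3} and Lemma~\ref{lemma:mf_tools}~\eqref{eq:(iv)}, the $\varepsilon$-shifted supermartingale upper bound with the moment estimate on $\bar{X}^{\pi}$, and the martingale property for the candidate optimizer). A minor point in your favour: you correctly state that the supermartingale property requires a \emph{nonpositive} drift, whereas the multi-agent proof in the paper writes ``nonnegative'' at the corresponding step, which is evidently a typo.
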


This lemma closely resembles the corresponding result, Theorem~\ref{thm:verification_n}, in the multi-agent game. Consequently, the proof requires only minor adaptations to account for the different mean wealth. For sake of brevity, we omit it here and confine ourselves to pointing the reader to our standing Assumption~\ref{assumption:T} and Lemma~\ref{lemma:mf_tools} which give us all the tools to argue exactly as in the proof of Theorem~\ref{thm:verification_n}.

Having thus found the best responses of each type of agent, we can now piece these together to find the best response control of our representative agent and thus solve her best response problem \eqref{eq:equivalent_optimization_problem_mf}:

\begin{theorem}\label{thm:mf_best_response}
	The best response control $\phi^\star_\pi \in \bar{\mathcal{A}}$ to $\pi \in \bar{\mathcal{A}}_{\text{sig}}$ in the sense of~\eqref{eq:equivalent_optimization_problem_mf} is an admissible signal-driven strategy $\phi^{\star}_\pi = \phi^{\tau,\star}_\pi \in \bar{\mathcal{A}}_{\text{sig}}$ with $(\phi^{\mathtt{t},\star}_\pi)_{\mathtt{t} \in \mathtt{T}}$ given by \eqref{eq:mf_optimal_zero} and \eqref{eq:mf_optimal_nonzero}.
\end{theorem}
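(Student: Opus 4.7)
The plan is to reduce the representative investor's best response problem to the family of type-dependent problems already solved in Lemma~\ref{lemma:verification_mf}, and then glue the type-wise maximizers into a measurable signal-driven strategy on $\bar{\Omega}$.

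First, I would use Fubini to disintegrate the expected utility. Using $\bar{\mathbb{P}} = \mathbb{T} \otimes \mathbb{P}$ and that $\bar{X}^\pi_T$ is $\mathcal{F}^0_T$-measurable --- hence a function of $\omega$ alone that is independent of the type component --- for any $\tilde\phi = \hat\phi(Z) \in \bar{\mathcal{A}}$ I would write
\[
\bar{\mathbb{E}}[u(X^{\tilde\phi}_T, \bar{X}^\pi_T)] = \int_{\mathtt{T}} \mathbb{E}\bigl[u_{\mathtt{t}}(X^{\tilde\phi^{\mathtt{t}}}_T, \bar{X}^\pi_T)\bigr]\, \mathbb{T}(d\mathtt{t}),
\]
where the $\mathtt{t}$-slice $\tilde\phi^{\mathtt{t}} := \hat\phi^{\mathtt{t}}(Z^{\mathtt{t}})$ belongs to $\bar{\mathcal{A}}^{\mathtt{t}}$. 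For each $\mathtt{t}$ the inner expectation is precisely the single-agent best response problem treated in Lemma~\ref{lemma:verification_mf}, which yields $\mathbb{E}[u_{\mathtt{t}}(X^{\tilde\phi^{\mathtt{t}}}_T, \bar{X}^\pi_T)] \leq v^{\mathtt{t}}(x_0^{\mathtt{t}}, \bar{x}_0, T)$ with equality attained uniquely at the signal-driven control $\phi^{\mathtt{t},\star}_\pi \in \bar{\mathcal{A}}^\mathtt{t}_{\text{sig}}$ given by \eqref{eq:mf_optimal_zero}--\eqref{eq:mf_optimal_nonzero}. Integrating in $\mathtt{t}$ produces the uniform upper bound $\bar{\mathbb{E}}[u(X^{\tilde\phi}_T, \bar{X}^\pi_T)] \leq \int_{\mathtt{T}} v^{\mathtt{t}}(x_0^{\mathtt{t}}, \bar{x}_0, T)\mathbb{T}(d\mathtt{t})$, and it remains only to glue the pointwise maximizers into a single $\phi^\star_\pi \in \bar{\mathcal{A}}_{\text{sig}}$ attaining this bound.

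The heart of the argument is this measurable gluing: the map $(\mathtt{t}, z) \mapsto \phi^{\mathtt{t},\star}_\pi(z)$ must be $\mathcal{T} \otimes \mathcal{B}(\mathbb{R})$-measurable so that $\phi^\star_\pi(\bar\omega)_t := \phi^{\tau(\bar\omega),\star}_\pi(Z_t(\bar\omega))$ is a genuine element of $\bar{\mathcal{A}}_{\text{sig}}$. I would apply the Measurable Maximum Theorem (\cite[Thm.~18.19]{Aliprantis2006}) separately to \eqref{eq:mf_optimal_zero} and to the inner argmax in \eqref{eq:mf_optimal_nonzero}, verifying its three hypotheses: (a) the constraint correspondence $\mathtt{t} \mapsto \bar\Phi^{\mathtt{t}}(z)$ is compact-valued and weakly $\mathcal{T}$-measurable, which is supplied by Lemma~\ref{lemma:weak_mb_Phi}; (b) the objective is Carathéodory in $(\mathtt{t}, \varphi)$, i.e.\ continuous in $\varphi$ (by dominated convergence using the uniform bound \ref{eq:(iv)} of Lemma~\ref{lemma:mf_tools}) and measurable in $\mathtt{t}$; and (c) the argmax is single-valued, which follows from strict concavity of the target in $\varphi$ on the convex set $\bar\Phi^{\mathtt{t}}(z)$.

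The main obstacle will be part (b), the joint $\mathtt{t}$-measurability of integral objectives such as
\[
(\mathtt{t},z,\varphi) \mapsto \int_{\{\zeta^{\mathtt{t}} = z\}} \bigl[u_{\mathtt{t}}(1+\varphi\eta^{\mathtt{t}}(e),\overline{1+(\pi\circ\zeta)\eta}(e)) - u_{\mathtt{t}}(1,1)\bigr] K^{\mathtt{t}}(z, de),
\]
since $z$ enters through the type-dependent support of the disintegration $K^{\mathtt{t}}(z,\cdot)$. Here I would crucially exploit Assumption~\ref{itm:T3}, which forces $\zeta^{\mathtt{t}}(E)$ to be finite, so the outer $\mu^{\mathtt{t}}$-integral in \eqref{eq:M_mf} reduces to a finite sum and the issue simplifies to measurability of finitely many kernels; this in turn follows from the jointly measurable disintegration \eqref{eq:disintegration_mf}. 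Once the measurable selector is produced, admissibility of $\phi^\star_\pi$ in $\bar{\mathcal{A}}_{\text{sig}}$ is immediate by construction, and saturating each type-wise upper bound makes $\phi^\star_\pi$ attain the integrated upper bound, which is therefore the value of the representative investor's best response problem.
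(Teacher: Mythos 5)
Your proposal is correct and follows essentially the same route as the paper's proof: disintegrate the expected utility over $\mathbb{T}$ via Fubini, invoke the type-wise verification result of Lemma~\ref{lemma:verification_mf} to get the upper bound, and glue the type-wise maximizers into a $\mathcal{T}\otimes\mathcal{B}(\mathbb{R})$-measurable selector using the Measurable Maximum Theorem together with Lemma~\ref{lemma:weak_mb_Phi}, Lemma~\ref{lemma:mf_tools}\ref{eq:(iv)} and the finiteness of the signal range from \ref{itm:T3}. The only difference is that you spell out the joint measurability of the integral objectives in slightly more detail than the paper does.
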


\begin{proof}
	For given $\pi \in \bar{\mathcal{A}}_{\text{sig}}$, we need to show that the strategy $\phi^\star_\pi = (\phi^{\mathtt{t},\star}_\pi)_{\mathtt{t} \in \mathtt{T}}$ given by \eqref{eq:mf_optimal_zero} and \eqref{eq:mf_optimal_nonzero} maximizes
	\begin{equation*}
        \bar{\mathbb{E}}[u(X^{\hat{\phi}}_T,\bar{X}^\pi_T)]
    \end{equation*}
    over all $\hat{\phi}$ in $\bar{\mathcal{A}}$. By Lemma \ref{lemma:verification_mf}, for any $\hat{\phi} \in \bar{\mathcal{A}}$, it holds that
    \begin{equation}\label{eq:bound_control}
    	\bar{\mathbb{E}}[u(X^{\hat{\phi}}_T,\bar{X}^\pi_T)] = \int_{\mathtt{T}} \mathbb{E}[u_\mathtt{t}(X^{\hat{\phi}^\mathtt{t}}_T,\bar{X}^\pi_T)] \mathbb{T}(d\mathtt{t}) \leq \int_{\mathtt{T}} \sup_{\hat{\phi}^\mathtt{t} \in \bar{\mathcal{A}}^\mathtt{t} }\mathbb{E}[u_\mathtt{t}(X^{\hat{\phi}^\mathtt{t}}_T,\bar{X}^\pi_T)] \mathbb{T}(d\mathtt{t}) = \int_{\mathtt{T}} v^{\mathtt{t}}(x,\bar{x},0) \mathbb{T}(d\mathtt{t}).
    \end{equation}
	Defining $\phi^\star_\pi(\bar{\omega}) = \phi^{\mathtt{t},\star}_\pi(Z^{\mathtt{t}}(\omega))$ for $\bar{\omega}=(\mathtt{t},\omega) \in \bar{\Omega}$ with $\phi^{\mathtt{t},\star}_\pi(z)$ given by \eqref{eq:mf_optimal_zero} and \eqref{eq:mf_optimal_nonzero}, we find that $\phi^\star_\pi$ indeed attains the upper bound in \eqref{eq:bound_control} as soon as we have shown that $\phi^\star_\pi$ is a $\mathcal{T} \otimes \mathcal{B}(\mathbb{R})$-measurable field. Since, by \ref{itm:T3}, there are only finitely many signals for each type, it suffices to ensure measurable dependence of \eqref{eq:mf_optimal_zero} and \eqref{eq:mf_optimal_nonzero} on $\mathtt{t} \in \mathtt{T}$. For this, note that the target functions in \eqref{eq:mf_optimal_zero} and \eqref{eq:mf_optimal_nonzero} are Carathéodory functions: they are continuous in $\varphi$ by dominated convergence using Lemma \ref{lemma:mf_tools} \eqref{eq:(iv)}, and as functions of $\mathtt{t}$ they are $\mathcal{T}$-measurable. Together with Lemma~\ref{lemma:weak_mb_Phi} this allows us to apply \cite[Theorem 18.19]{Aliprantis2006} to show that the unique maximizers are indeed $\mathcal{T}$-measurable. In conclusion, $\phi^{\star}_\pi$ is an admissible signal-driven strategy and the best response control to $\pi$ in the sense of~\eqref{eq:equivalent_optimization_problem_mf}.
\end{proof}

\subsection{Existence of a Mean Field Equilibrium}\label{sec:mean_field_existence}

Similarly to the multi-agent game, the main task now is to define appropriate domain and image sets which ensure that Schauder's Fixed Point Theorem can be applied to a suitable fixed point mapping. However, here, instead of finitely many investors, we will rely on a restriction to finitely many types of investors, or on a restriction to finitely many different common noise marks, i.e. $|E^\mathtt{c}|< \infty$. Consequently, we will provide two existence results, the first of which treats the setting of finitely many types:

\begin{theorem}\label{thm:mean_field_equilibrium_finite}
	Under Assumption~\ref{assumption:T}, there exists a signal-driven mean field equilibrium $\pi^{\star}$ if $|\supp \mathbb{T}| < \infty$.
\end{theorem}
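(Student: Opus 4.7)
The plan is to mimic the argument for Theorem~\ref{thm:nash_equilibrium} in the multi-agent setting: identify the space of signal-driven strategies with a finite-dimensional compact convex set, show that the best response map from Theorem~\ref{thm:mf_best_response} is a continuous self-map of this set, and conclude by Schauder's Fixed Point Theorem.

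Since $|\supp \mathbb{T}| < \infty$ and, by \ref{itm:T3}, for each type $\mathtt{t} \in \supp \mathbb{T}$ the set $\zeta^{\mathtt{t}}(E) \cup \{0\}$ is finite, a signal-driven strategy $\pi \in \bar{\mathcal{A}}_{\text{sig}}$ is fully characterized by its values $\pi^{\mathtt{t}}(z)$ for $\mathtt{t} \in \supp \mathbb{T}$ and $z \in \zeta^{\mathtt{t}}(E) \cup \{0\}$. The set of such strategies can thus be identified with the finite-dimensional product
\[
  \bar{\Phi} := \bigtimes_{\mathtt{t} \in \supp \mathbb{T}} \bigtimes_{z \in \zeta^{\mathtt{t}}(E) \cup \{0\}} \bar{\Phi}^{\mathtt{t}}(z),
\]
which is non-empty, compact and convex by construction.

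I would then define the best response map
\[
  \gamma : \bar{\Phi} \to \bar{\Phi}, \quad \pi \mapsto \phi^{\star}_\pi = (\phi^{\mathtt{t},\star}_\pi)_{\mathtt{t} \in \supp \mathbb{T}},
\]
where the components $\phi^{\mathtt{t},\star}_\pi(z)$ are given by the $\argmax$ formulas \eqref{eq:mf_optimal_zero} and \eqref{eq:mf_optimal_nonzero} from Theorem~\ref{thm:mf_best_response}. Any fixed point $\pi^\star = \gamma(\pi^\star)$ then satisfies the consistency condition, and by Theorem~\ref{thm:mf_best_response} it is indeed a signal-driven mean field equilibrium in the sense of Definition~\ref{defn:mf_equilibrium}.

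The main technical point is continuity of $\gamma$. For this, I would observe that the integrands under the $\argmax$ depend on $\pi$ only through the quantities $\overline{\tau \pi}$, $\overline{\sigma^0 \pi}$ (finite-dimensional integrals against $\mathbb{T}$) and $\overline{1+(\pi \circ \zeta)\eta}(e^{\mathtt{c}})$. Using the uniform bounds from Lemma~\ref{lemma:mf_tools} \ref{itm:(i)}--\ref{itm:(iii)} together with the domination~\eqref{eq:(iv)}, dominated convergence yields joint continuity of the target functions in $(\varphi,\pi)$ for each $\mathtt{t}$ and $z$. Strict concavity of the utility $u_{\alpha^{\mathtt{t}}}$ in the position $\varphi$ ensures that each $\argmax$ is a singleton, so Berge's Maximum Theorem (\cite[Theorem 17.31]{Aliprantis2006}) upgrades upper-hemicontinuity of the maximizer correspondences to genuine continuity of the component maps $\pi \mapsto \phi^{\mathtt{t},\star}_\pi(z)$. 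Hence $\gamma$ is continuous, and Schauder's Fixed Point Theorem yields the desired $\pi^\star \in \bar{\Phi}$ with $\pi^\star = \gamma(\pi^\star)$. The hardest part is verifying the continuity of the jump-integral terms uniformly in $\pi$; this is precisely where \ref{itm:T5} and \ref{itm:T6} (in its finite-support form) combine with Lemma~\ref{lemma:mf_tools} to provide $\nu$-integrable dominating functions that are independent of $\pi$, enabling the dominated convergence argument.
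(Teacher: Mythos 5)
Your proposal is correct and follows essentially the same route as the paper's proof: identification of $\bar{\mathcal{A}}_{\text{sig}}$ with the finite-dimensional compact convex set $\bar{\Phi}$, continuity of the best-response map via Lemma~\ref{lemma:mf_tools} and dominated convergence plus Berge's Maximum Theorem with singleton $\argmax$-sets, and Schauder's Fixed Point Theorem. The only minor quibble is that $\overline{\tau\pi}$ does not actually enter the $\argmax$ formulas \eqref{eq:mf_optimal_zero}--\eqref{eq:mf_optimal_nonzero} (it appears only additively outside the suprema in $M^{\mathtt{t}}_\pi$), so continuity need only be tracked through $\overline{\sigma^0\pi}$ and $\overline{1+(\pi\circ\zeta)\eta}(e^{\mathtt{c}})$, exactly as the paper does.
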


\begin{proof}
    We need to find $\pi^\star \in \bar{\mathcal{A}}_{\text{sig}}$ which coincides with its best response, i.e. which satisfies $\phi^{\star}_{\pi^\star}=\pi^{\star}$, where $\phi^{\star}_{\pi^\star}$ is the optimal best control from Theorem \ref{thm:mf_best_response} corresponding to $\pi^\star$. In fact, we, then, have that the consistency condition for the mean field equilibrium
		\[ \bar{X}^{\pi^\star}_T= \exp \bar{\mathbb{E}}[ \log X_T^{\pi^\star}| \mathcal{F}^0_T]= \exp \bar{\mathbb{E}}[\log X_T^{\phi^{\star}_{\pi^\star}}|\mathcal{F}_T^0],\]
	holds. 
    
    This fixed point problem can be reformulated as follows. Let us recall that $\supp \mathbb{T} =: \bar{\mathtt{T}} \subset \mathtt{T}$ is finite by assumption and that each type can receive only finitely many distinct signals due to assumption~\ref{itm:T3}. Therefore, the set of admissible signal-driven strategies $\bar{\mathcal{A}}_{\text{sig}}$ can be identified with the compact, convex and non-empty set
	\begin{equation*}
		\bar{\Phi} := \bigtimes_{\mathtt{t} \in \bar{\mathtt{T}}} \bigtimes_{z \in \zeta^{\mathtt{t}}(E)\cup\{0\}} \bar{\Phi}^{\mathtt{t}}(z).
	\end{equation*}
	This is the domain and image set of the fixed-point mapping
		\begin{align*}
    		\gamma_{\text{MF}} :  \bar{\Phi} \to \bar{\Phi}, \quad \pi = (\pi^{\mathtt{t}})_{\mathtt{t} \in \bar{\mathtt{T}}} \mapsto \phi^{\star}_{\pi} = (\phi^{\mathtt{t},\star}_{\pi})_{\mathtt{t} \in \bar{\mathtt{T}}}.
    	\end{align*}
    Let us show continuity of $\gamma_{\text{MF}}$. First, we observe that, by Lemma~\ref{lemma:mf_tools}\ref{itm:(i)} together with~\ref{itm:T4} and Lemma~\ref{lemma:mf_tools}\ref{itm:(iii)}, both $\pi \mapsto \overline{\sigma^0\pi}$ and $\pi \mapsto  \overline{1+(\pi \circ \zeta)}(e^{\mathtt{c}})$ are continuous on $\bar{\Phi}$, for each $e^{\mathtt{c}} \in E^{\mathtt{c}}$. Secondly, Lemma~\ref{lemma:mf_tools} \eqref{eq:(iv)} allows us by dominated convergence to deduce that the target functions of the best response control $\phi^{\mathtt{t},\star}_{\pi}$ in \eqref{eq:mf_optimal_zero} and \eqref{eq:mf_optimal_nonzero} are continuous in $(\varphi,\pi)$. Thus, by Berge's Maximum Theorem (cf. \cite[Theorem 17.31]{Aliprantis2006}) the maximizers \eqref{eq:mf_optimal_zero} and \eqref{eq:mf_optimal_nonzero} are upper-hemicontinuous in $\pi$. Since these $\argmax$-sets are actually singletons, the maximizers are continuously depending on $\pi$. As a consequence, $\gamma_{\text{MF}}$ is indeed continuous on $\bar{\Phi}$.
    
    Finally, since $\bar{\Phi}$ is non-empty, compact and convex and $\gamma_{\text{MF}}$ is a continuous function, there exists $\pi^{\star} \in \bar{\Phi}$ such that $\pi^{\star}=\gamma_{\text{MF}}(\pi^{\star})$ by Schauder's Fixed Point Theorem, cf. \cite{Schauder1930}.
\end{proof}

Note that the previous proof uses the assumption $|\supp \mathbb{T}| < \infty$ in an essential way to conclude continuity of the fixed-point mapping on a compact set. Indeed, when $|\supp \mathbb{T}| = \infty$, defining an appropriate domain and image set for such a fixed-point mapping becomes a more delicate task as we can no longer rely on a finite-dimensional representation of the representative investor's strategy. In the following we propose a way to circumvent this technical issue by assuming that there are only finitely many common noise marks. Indeed, under this assumption, the next lemma provides existence of a suitable fixed point mapping. 

\begin{lemma}\label{lemma:K}
    Suppose $|\supp \mathbb{T}| = \infty$ and $|E^{\mathtt{c}}|< \infty$. Then, there exists a constant $\bar{C} >0$ such that for all $\pi \in \bar{\mathcal{A}}_{\text{sig}}$, $\mathbb{T}$-almost every $\mathtt{t}$ and $\nu$-almost every $e =(e^{\mathtt{c}},e^{\mathtt{i}}) \in E$, we have
	\begin{equation}\label{eq:bdd_log}
		|\log(1+\pi^{\mathtt{t}}(\zeta^{\mathtt{t}}(e^{\mathtt{c}},e^{\mathtt{i}}))\eta^\mathtt{t}(e^{\mathtt{c}},e^{\mathtt{i}}))| \leq \bar{C}. 
	\end{equation}
    Moreover, there exist a non-empty convex and compact $K \subset \mathbb{R} \times \mathbb{R}^{|E^c|}$ with the following properties:
    \begin{itemize}
        \item[(i)] For all $\pi \in \bar{\mathcal{A}}_{\text{sig}}$,  we have 
        \begin{align*}
		\mathtt{m}_\pi:=(\mathtt{m}_\pi(0),(\mathtt{m}_{\pi}(e^{\mathtt{c}}))_{e^{\mathtt{c}}\in E^{\mathtt{c}}}):= (\overline{\sigma^0\pi},( \overline{1+ (\pi \circ \zeta)\eta}(e^{\mathtt{c}}))_{e^{\mathtt{c}} \in E^{\mathtt{c}}})\in K.
	\end{align*}
    \item[(ii)] Every $m=(m(0),(m(e^{\mathtt{c}}))_{e^{\mathtt{c}} \in E^{\mathtt{c}}}) \in K$, yields an admissible  signal-driven strategy $\pi^\star_m \in \bar{\mathcal{A}}_{\text{sig}}$ where $\mathbb{T}$-a.e.\ type $\mathtt{t}$ will invest a fraction
    \begin{equation}\label{eq:selfmap_pi}
        \begin{split}
            &\pi^{\mathtt{t},\star}_m(z) =  
       \begin{cases}
       		\argmax\limits_{\varphi \in \bar{\Phi}^\mathtt{t}(0)} \bigg\{ \varphi(\kappa^\mathtt{t}-r^\mathtt{t}) - \tfrac{1}{2} \alpha^\mathtt{t} \varphi^2 ((\sigma^\mathtt{t})^2+(\sigma^{0\mathtt{t}})^2)  - \theta^\mathtt{t} (1-\alpha^\mathtt{t}) \sigma^{0\mathtt{t}}\varphi m(0) &\\
            \qquad\qquad\qquad\qquad+ \displaystyle\int_{\{\zeta^\mathtt{t} = 0 \}} \bigg[ u_\mathtt{t}(1+\varphi \eta^\mathtt{t}(e), m(e^{\mathtt{c}}) ) - u_\mathtt{t}(1,1) \bigg] \nu(de)  \bigg\} &\text{ if } z = 0, \\
        	\argmax\limits_{\varphi \in \bar{\Phi}^\mathtt{t}(z)} \bigg\{ \displaystyle\int_{\{\zeta^\mathtt{t} = z \}}\bigg[ u_\mathtt{t}(1+\varphi \eta^\mathtt{t}(e),m_\pi(e^{\mathtt{c}})) - u_\mathtt{t}(1,1) \bigg] K^\mathtt{t}(z,de) \bigg\} &\text{ if } z \neq 0,
        \end{cases}
        \end{split}
    \end{equation}
    of wealth in the stock when observing a signal $z$. 
    \end{itemize}
    In particular, 
    \begin{equation}\label{eq:selfmap}
        \gamma_{\text{\rm MF}}(m):=\mathtt{m}_{\pi^{\star}_m}
    \end{equation}
    yields a mapping $\gamma_{\text{\rm MF}}:K\to K$.
\end{lemma}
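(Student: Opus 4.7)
\textbf{Proof plan for Lemma~\ref{lemma:K}.} My plan is to first derive the uniform bound~\eqref{eq:bdd_log}, then use it to identify a single product of compact intervals that simultaneously contains the environment vector $\mathtt{m}_\pi$ for every admissible $\pi$ and serves as the domain on which~\eqref{eq:selfmap_pi} produces a well-defined admissible strategy.

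For~\eqref{eq:bdd_log}, I would bound the argument of the logarithm above and away from zero. The lower bound is immediate: Lemma~\ref{lemma:mf_lower_bound} gives a constant $c>0$ with $1+\pi^{\mathtt{t}}(\zeta^{\mathtt{t}}(e))\eta^{\mathtt{t}}(e)\geq c$ for $\mathbb{T}\otimes\nu$-a.e.\ $(\mathtt{t},e)$, hence $\log(\cdot)\geq\log c$. For the upper bound, I use Lemma~\ref{lemma:mf_tools}\ref{itm:(i)}, which supplies a $\pi$-independent constant $C$ with $|\pi^{\mathtt{t}}(z)|\leq C$, and the second alternative of~\ref{itm:T6}, which is the one that applies in the present case $|\supp\mathbb{T}|=\infty$ and yields $M:=\mathbb{T}(d\mathtt{t})\otimes\nu(de)\text{-}\esssup\eta^{\mathtt{t}}(e)<\infty$. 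Together with $\eta^{\mathtt{t}}\geq-1$ from~\ref{itm:T1}, this produces $1+\pi^{\mathtt{t}}(\zeta^{\mathtt{t}}(e))\eta^{\mathtt{t}}(e)\leq 1+C\max(M,1)$, so $\bar{C}:=\max\{|\log c|,\log(1+C\max(M,1))\}$ works.

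Next I would define $K$ componentwise. For the $\overline{\sigma^0\pi}$-coordinate, Lemma~\ref{lemma:mf_tools}\ref{itm:(i)} and the uniform bound on $\sigma^{0\mathtt{t}}$ from~\ref{itm:T4} give $|\overline{\sigma^0\pi}|\leq C\cdot\esssup_{\mathbb{T}}\sigma^{0\mathtt{t}}=:C_1$, so this coordinate lies in $[-C_1,C_1]$. For each $e^{\mathtt{c}}$-coordinate, the definition~\eqref{eq:mean_jumps} together with~\eqref{eq:bdd_log} gives $e^{-\bar{C}}\leq\overline{1+(\pi\circ\zeta)\eta}(e^{\mathtt{c}})\leq e^{\bar{C}}$, which can be tightened from below to $\bar{c}>0$ using Lemma~\ref{lemma:mf_tools}\ref{itm:(ii)}. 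I therefore take
\[ K := [-C_1,C_1]\times [\bar{c},e^{\bar{C}}]^{|E^{\mathtt{c}}|}, \]
which is non-empty, convex and compact, and property~(i) holds by construction.

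For property~(ii), fix $m\in K$. The target functions on the right-hand side of~\eqref{eq:selfmap_pi} are strictly concave in $\varphi$ (the quadratic part has coefficient $-\tfrac{1}{2}\alpha^{\mathtt{t}}((\sigma^\mathtt{t})^2+(\sigma^{0\mathtt{t}})^2)$ and the integrands are strictly concave in $\varphi$ as compositions of $u_\mathtt{t}$ with affine maps) and $\bar{\Phi}^{\mathtt{t}}(z)$ is compact, so the argmax is a singleton. Measurability of $(\mathtt{t},z)\mapsto\pi^{\mathtt{t},\star}_m(z)$ follows by the Measurable Maximum Theorem \cite[Theorem 18.19]{Aliprantis2006}, applied exactly as in the proof of Theorem~\ref{thm:mf_best_response}: the integrands are Carath\'eodory in $(\mathtt{t},\varphi)$ (continuity in $\varphi$ holds by dominated convergence using Lemma~\ref{lemma:mf_tools}\eqref{eq:(iv)} and the bound $m(e^{\mathtt{c}})\geq\bar{c}$, while $\mathcal{T}$-measurability in $\mathtt{t}$ is inherited from the type parametrization), and $\mathtt{t}\mapsto\bar{\Phi}^{\mathtt{t}}(z)$ is weakly $\mathcal{T}$-measurable by Lemma~\ref{lemma:weak_mb_Phi}. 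Since $\pi^{\mathtt{t},\star}_m(z)\in\bar{\Phi}^\mathtt{t}(z)$ by construction, we get $\pi^\star_m\in\bar{\mathcal{A}}_{\text{sig}}$. Finally, applying~(i) to $\pi=\pi^\star_m$ gives $\mathtt{m}_{\pi^\star_m}\in K$, so $\gamma_{\text{MF}}$ indeed maps $K$ into itself. The only real obstacle is the upper bound in~\eqref{eq:bdd_log}, which is precisely where the uniform-boundedness branch of~\ref{itm:T6} for infinite type supports is essential; once this is in hand the remaining arguments are straightforward adaptations of tools already developed in Section~\ref{sec:mean_field_HJB}.
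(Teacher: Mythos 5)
Your proof is correct and follows essentially the same route as the paper's: the bound \eqref{eq:bdd_log} from Lemma~\ref{lemma:mf_lower_bound}, Lemma~\ref{lemma:mf_tools}\ref{itm:(i)} and the uniform-boundedness branch of \ref{itm:T6}; the box $K$ built from these bounds together with \ref{itm:T4} and Lemma~\ref{lemma:mf_tools}\ref{itm:(ii)}; and admissibility/measurability of $\pi^\star_m$ by repeating the Measurable Maximum Theorem argument from Theorem~\ref{thm:mf_best_response}. Your version is, if anything, slightly more explicit about the constants and about why the argmax sets are singletons.
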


\begin{proof}
    First, we show \eqref{eq:bdd_log}. For this, let $\pi \in \bar{\mathcal{A}}_{\text{sig}}$, $\mathtt{t} \in \mathtt{T}$ and $e =(e^{\mathtt{c}},e^{\mathtt{i}}) \in E$. We find that, on $\{ 0 < 1 + \pi^{\mathtt{t}}(\zeta^{\mathtt{t}}(e^{\mathtt{c}},e^{\mathtt{i}}))\eta^\mathtt{t}(e^{\mathtt{c}},e^{\mathtt{i}}) < 1 \}$, the $\log$-term is negative but uniformly bounded from below by Lemma~\ref{lemma:mf_lower_bound}. For the complementary case, i.e., on $\{ 1 < 1 + \pi^{\mathtt{t}}(\zeta^{\mathtt{t}}(e^{\mathtt{c}},e^{\mathtt{i}}))\eta^\mathtt{t}(e^{\mathtt{c}},e^{\mathtt{i}}) \}$, the $\log$-term is positive and we find
    \begin{align*}
    	|\log(1+\pi^{\mathtt{t}}&(\zeta^{\mathtt{t}}(e^{\mathtt{c}},e^{\mathtt{i}}))\eta^\mathtt{t}(e^{\mathtt{c}},e^{\mathtt{i}}))|  \leq  \big(\max_{z \in \zeta^\mathtt{t}(E)\cup\{0\}} |\pi^\mathtt{t}(z)|\big) (1+\eta^\mathtt{t}(e^{\mathtt{c}},e^{\mathtt{i}})),
    \end{align*}
    where both factors on the right-hand side are uniformly bounded by Lemma \ref{lemma:mf_tools} \ref{itm:(i)} and  \ref{itm:T6} (read for the present case $|\supp \mathbb{T}| = \infty$).

    For the existence of $K$, we have that $\sigma^0\pi(0)$ is bounded on $\mathtt{T}$ independently from the choice of $\pi \in \bar{\mathcal{A}}_{\text{sig}}$ by Lemma \ref{lemma:mf_tools} \ref{itm:(i)} together with \ref{itm:T4}.
	Also, for the mean jumps, Lemma~\ref{lemma:mf_tools}\ref{itm:(ii)} provides the lower and \eqref{eq:bdd_log} the upper bound, that is, we immediately, find that $\bar{c} \leq m_\pi(e^\mathtt{c}) \leq \bar{C}$ for all $\pi \in \bar{\mathcal{A}}_{\text{sig}}$ for all $e^{\mathtt{c}} \in E^{\mathtt{c}}$. Putting everything together, this ensures the existence of $K$.

    Also, following the same arguments as in the proof of Theorem~\ref{thm:mf_best_response}, it is straightforward to show that $\pi^\star_m$ as defined in \eqref{eq:selfmap_pi} is admissible and signal-driven.
\end{proof}

Now, we are ready to prove the existence of a mean field equilibrium when there are not necessarily finitely many types.

\begin{theorem}\label{thm:mean_field_equilibrium} Under         Assumption~\ref{assumption:T}, there exists a signal-driven mean field equilibrium $\pi^{\star}$ if 
	$|\supp \mathbb{T}| = \infty$ and $|E^{\mathtt{c}}|< \infty$. 
\end{theorem}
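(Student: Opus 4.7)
The plan is to apply Schauder's Fixed Point Theorem to the mapping $\gamma_{\text{MF}}:K\to K$ introduced in \eqref{eq:selfmap} of Lemma~\ref{lemma:K}, and then to upgrade a fixed point $m^\star$ to a signal-driven mean field equilibrium $\pi^\star:=\pi^\star_{m^\star}$. The decisive benefit of working on $K$ rather than on the infinite-dimensional set $\bar{\mathcal{A}}_{\text{sig}}$ is that $|E^{\mathtt{c}}|<\infty$ makes $K\subset\mathbb{R}^{1+|E^{\mathtt{c}}|}$ a finite-dimensional, non-empty, convex, compact set; the self-map property of $\gamma_{\text{MF}}$ is already built into Lemma~\ref{lemma:K}, so the only substantive point is continuity.

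For continuity of $\gamma_{\text{MF}}$ I would argue in two stages. Fix a convergent sequence $m_n\to m$ in $K$. First, for $\mathbb{T}$-almost every $\mathtt{t}\in\mathtt{T}$ and each $z\in\zeta^{\mathtt{t}}(E)\cup\{0\}$, the target functions inside the $\argmax$ in \eqref{eq:selfmap_pi} depend continuously on $(\varphi,m)$: continuity in $\varphi$ for fixed $m$ is already used in Lemma~\ref{lemma:verification_mf}, and joint continuity follows by dominated convergence with the bound from Lemma~\ref{lemma:mf_tools}\eqref{eq:(iv)} (together with Lemma~\ref{lemma:mf_tools}\ref{itm:(ii)} to keep the $m(e^{\mathtt{c}})$-factors uniformly away from zero). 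Strict concavity in $\varphi$ makes each $\argmax$ a singleton, so Berge's Maximum Theorem (\cite[Theorem 17.31]{Aliprantis2006}) yields that $m\mapsto \pi^{\mathtt{t},\star}_m(z)$ is continuous for $\mathbb{T}$-almost every $\mathtt{t}$. Secondly, this type-wise continuity must be lifted to continuity of the integrals defining $\mathtt{m}_{\pi^\star_m}$. Here the uniform bounds of Lemma~\ref{lemma:K} become crucial: by Lemma~\ref{lemma:mf_tools}\ref{itm:(i)} together with \ref{itm:T4}, $\sigma^{0\mathtt{t}}\pi^{\mathtt{t},\star}_m(0)$ is uniformly bounded in $(\mathtt{t},m)$, and by \eqref{eq:bdd_log}, $\log(1+\pi^{\mathtt{t},\star}_m(\zeta^{\mathtt{t}}(e))\eta^{\mathtt{t}}(e))$ is uniformly bounded in $(\mathtt{t},e,m)$. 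Dominated convergence in $\mathbb{T}(d\mathtt{t})$ and in $\nu^{\mathtt{i}}(de^{\mathtt{i}})\otimes\mathbb{T}(d\mathtt{t})$ respectively then transports the pointwise continuity to continuity of $\overline{\sigma^0\pi^\star_m}$ and of each $\overline{1+(\pi^\star_m\circ\zeta)\eta}(e^{\mathtt{c}})$, using that $|E^{\mathtt{c}}|<\infty$ so that we only have finitely many such coordinates to control.

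With $\gamma_{\text{MF}}:K\to K$ continuous on a non-empty, convex, compact subset of Euclidean space, Schauder's Fixed Point Theorem delivers $m^\star\in K$ with $\gamma_{\text{MF}}(m^\star)=m^\star$. Setting $\pi^\star:=\pi^\star_{m^\star}\in\bar{\mathcal{A}}_{\text{sig}}$, the identity $m^\star=\mathtt{m}_{\pi^\star}$ means precisely that the auxiliary data $m^\star$ plugged into \eqref{eq:selfmap_pi} coincides with $(\overline{\sigma^0\pi^\star},(\overline{1+(\pi^\star\circ\zeta)\eta}(e^{\mathtt{c}}))_{e^{\mathtt{c}}\in E^{\mathtt{c}}})$ produced by $\pi^\star$ itself. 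Comparing \eqref{eq:selfmap_pi} term by term with the best-response formulas \eqref{eq:mf_optimal_zero}--\eqref{eq:mf_optimal_nonzero} of Lemma~\ref{lemma:verification_mf} then shows $\pi^\star=\phi^\star_{\pi^\star}$; combined with the consistency identity $\bar{X}^{\pi^\star}_T=\exp\bar{\mathbb{E}}[\log X^{\pi^\star}_T\mid\mathcal{F}^0_T]$ guaranteed by Lemma~\ref{lemma:mean_wealth_dynamics}, this is exactly the definition of a signal-driven mean field equilibrium via Theorem~\ref{thm:mf_best_response} and Definition~\ref{defn:mf_equilibrium}.

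The main obstacle I expect is the lifting step in the continuity argument: Berge's theorem gives type-wise continuity almost for free, but preserving continuity under integration against $\mathbb{T}(d\mathtt{t})$ over the infinite type space requires the uniform-in-type bounds of Lemma~\ref{lemma:K} and Lemma~\ref{lemma:mf_tools}. It is precisely to secure these bounds—and to avoid having to parameterize $\bar{\mathcal{A}}_{\text{sig}}$ itself—that the assumptions $|\supp\mathbb{T}|=\infty$ and $|E^{\mathtt{c}}|<\infty$ are packaged into the finite-dimensional surrogate $K$; once that reduction is in place, the remaining work is routine.
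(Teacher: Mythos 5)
Your proposal is correct and follows essentially the same route as the paper: both reduce to the finite-dimensional surrogate $K$ from Lemma~\ref{lemma:K}, establish continuity of $\gamma_{\text{MF}}$ in the same two stages (type-wise continuity of the maximizers via dominated convergence and Berge's Maximum Theorem with singleton $\argmax$-sets, then lifting through the integrals using the uniform bounds of Lemma~\ref{lemma:mf_tools} and \eqref{eq:bdd_log}), and conclude with Schauder's Fixed Point Theorem. Your explicit final step upgrading the fixed point $m^\star$ to the equilibrium $\pi^\star_{m^\star}$ is a welcome addition that the paper leaves largely implicit.
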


\begin{proof}
	Let us show that the self-map $\gamma_{\text{MF}}$ from \eqref{eq:selfmap} is continuous on $K$. First, we show continuity of
        \[ (m(0),(m(e^{\mathtt{c}}))_{e^{\mathtt{c}} \in E^{\mathtt{c}}}) \mapsto \pi^{\mathtt{t},\star}(z) \]
    for all $z \in \zeta^\mathtt{t}(E) \cup\{0\}$, for $\mathbb{T}$-almost every $\mathtt{t}$. By Lemma \ref{lemma:mf_tools} \eqref{eq:(iv)} and dominated convergence, the target functions of the argmax-operators of $\pi^{\mathtt{t},\star}(z)$ from \eqref{eq:selfmap_pi} are continuous in $(\varphi,m(0),m(e^{\mathtt{c}}))_{e^{\mathtt{c}} \in E^{\mathtt{c}}}$. Then, by Berge's Maximum Theorem (cf. \cite[Theorem 17.31]{Aliprantis2006}) the $\argmax$-operators are upper-hemicontinuous in $(m(0),(m(e^{\mathtt{c}}))_{e^{\mathtt{c}} \in E^{\mathtt{c}}})$. As these $\argmax$-sets are actually singletons, the maximizers $\pi^{\mathtt{t},\star}_m(z)$ in \eqref{eq:selfmap_pi} are given by functions which are continuous in $(m(0),(m(e^{\mathtt{c}}))_{e^{\mathtt{c}} \in E^{\mathtt{c}}})$, for all $z \in \zeta^\mathtt{t}(E) \cup\{0\}$ and for $\mathbb{T}$-almost every $\mathtt{t}$.
    
    Secondly, recalling that
    \begin{equation}\label{eq:m_star}
        \begin{split}
		\mathtt{m}_{\pi^\star_m}(0) &= \overline{\sigma^0\pi^\star_m} = \int_{\mathtt{T}} \sigma^{0\mathtt{t}}\pi^{\mathtt{t},\star}_m(0) \mathbb{T}(d\mathtt{t}), \\
		\mathtt{m}_{\pi^\star_m}(e^{\mathtt{c}}) &= \overline{1+(\pi^\star_m \circ \zeta)}(e^{\mathtt{c}}) = \exp \left\{\int_{\mathtt{T}\times E^{\mathtt{i}}} \log(1+\pi^{\mathtt{t},\star}_m(\zeta^{\mathtt{t}}(e^{\mathtt{c}},e^{\mathtt{i}}))\eta^\mathtt{t}(e^{\mathtt{c}},e^{\mathtt{i}})) \nu^{\mathtt{i}}(de^{\mathtt{i}}) \otimes \mathbb{T}(d\mathtt{t}) \right\}.
        \end{split}
	\end{equation}
    for all $e^{\mathtt{c}} \in E^{\mathtt{c}}$, we find that the continuous dependence on $m=(m(0),(m(e^{\mathtt{c}}))_{e^{\mathtt{c}} \in E^{\mathtt{c}}})$ carries over because both integrands are uniformly bounded by Lemma~\ref{lemma:mf_tools}\ref{itm:(i)}, \ref{itm:T4} and \eqref{eq:bdd_log}, allowing us to apply dominated convergence. Thus, $\gamma_{\text{MF}}$ is continuous on $K$.
    
   As a consequence, $\gamma_{\text{MF}}$ is a continuous self-map on $K$. Since $K \subset \mathbb{R}\times \mathbb{R}^{|E^{\mathtt{c}}|}$ is a non-empty compact and convex set, Schauder's Fixed Point Theorem \cite{Schauder1930} gives existence of a fixed point $m^\star \in K$ such that $m^\star = \gamma_{\text{MF}}(m^\star)$. 
    
   Finally, we define the strategy $\pi^\star := \pi^\star_{m^\star}$. By the definition of $\gamma_{\text{MF}}$ in \eqref{eq:selfmap}, the fixed point property yields $m^\star = \mathtt{m}_{\pi^\star_{m^\star}} = \mathtt{m}_{\pi^\star}$, meaning the environment generated by this strategy $\pi^\star$ is precisely $m^\star$. By construction \eqref{eq:selfmap_pi}, $\pi^\star = \pi^\star_{m^\star}$ is the unique best response to the environment $m^\star$, and thus $\pi^\star$ is the best response to its own generated environment, i.e., $\phi^\star_{\pi^\star} = \pi^\star$. As discussed previously, this sufficiency relation ensures that the required consistency condition holds for $\pi^\star$, completing the proof of existence.
\end{proof}

\begin{remark}[Uniqueness of Equilibria]\label{remark:uniqueness}

    The uniqueness of the found signal-driven equilibria in the above multi-agent and mean-field games remains an open question. In \cite{LZ19}, the authors prove the uniqueness within the class of constant equilibria for which they can rely on explicit best response controls in this very small class. We, however, consider merely signal-wise constant equilibrium strategies, for which the best response controls are given by one-dimensional concave optimization problems. Here, even in case of submodular utility, i.e.\ with $\alpha \in (0,1)$, uniqueness of the found equilibria seems far from straightforward to prove, at least to us. 
    
    This lack of uniqueness in equilibrium strategies prevents us from making any convergence statements based on these strategies and their aggregate wealth.
    Nevertheless, we note that our numerical experiments suggest uniqueness of the identified equilibrium. Indeed, in all our simulations, the fixed-point iteration always converged to the same equilibrium, irrespective of the randomly chosen strategies by which we started the iteration. This observation provides numerical evidence for uniqueness inside the restricted signal-driven class. It does not reveal anything about the existence or non-existence of equilibria involving non-signal driven strategies. 
\end{remark}

\section{Case Studies and Financial-Economic Discussion}\label{sec:numerics}

Equilibrium strategies are not available in explicit form, making equilibrium behavior difficult to characterize. Therefore, we turn to case studies and numerical methods to analyze how strategic interactions shape investor welfare and risk perception and to address questions like how much we should care about what others know.

In the following case studies, we will examine scenarios where investors differ in signal quantity, signal quality, and competitiveness. By systematically varying these parameters, we will identify how they impact the certainty equivalent assessing the investors' happiness and perceived risk. Our findings reveal when and why investors prefer certain competitive dynamics, whether they benefit from informational advantages or even favor better informed peers. 

Our discussion focuses exclusively on mean field games for three reasons. First, we prefer scenarios where the influence of an individual on relative wealth is negligible and interactions are between the different types of players within the population. Secondly, and consequently, when individual decisions do not affect others, mean field games yield a more compelling framework than multi-agent games for our focus on (comparably simple) time-independent, signal-driven strategies. Finally, mean field games significantly reduce dimensionalty, simplifying the analysis and interpretation of investor behavior in large populations.

\subsection{A Single Stock Model with Two Investor Types Receiving Different Signals}\label{sec:numerics1}

For simplicity, we consider mean field games with two different types $\mathtt{t} \in \{\mathtt{A},\mathtt{B}\}=:\mathtt{T}$ of investors trading in a single common stock subject to log-normal price shocks, that is, the parameters $\sigma=0$ and $r,\kappa,\sigma^{0},\eta$ do not vary with the type but are fixed. The proportions of types are given by $\mathbb{T}(\{\mathtt{A}\})=p^{\mathtt{A}}$, $\mathbb{T}(\{\mathtt{B}\})=p^{\mathtt{B}} \in [0,1]$ with $p^{\mathtt{A}}+p^{\mathtt{B}}=1$. We put $E^{\mathtt{c}} := \mathbb{R}$ and $E^{\mathtt{i}} := \mathbb{R} \times [0,1]$, and denote by $e^{\mathtt{i}} := (e^{\mathtt{i}}_1,e^{\mathtt{i}}_2) \in E^{\mathtt{i}}$ a generic element in $E^{\mathtt{i}}$ which accounts for the independent idiosyncratic noise. Then, we specify the jump intensity measures
	\begin{equation*}
        \nu^{\mathtt{c}}(de^{\mathtt{c}}) = N_{0,1}(de^{\mathtt{c}}) \text{ and } \nu^{\mathtt{i}}(de^{\mathtt{i}}) = N_{0,1}(de_1^{\mathtt{i}}) \otimes \mathcal{U}_{[0,1]}(de_2^{\mathtt{i}}),
    \end{equation*}
	where $N_{0,1}$ denotes the standard normal distribution and $\mathcal{U}_{[0,1]}$ the uniform distribution on $[0,1]$. 
We define the common log-normal jumps as
		\[ \eta(e) := \eta(e^{\mathtt{c}}) = \exp(\hat{\sigma} e^{\mathtt{c}} + \hat{\kappa} - \tfrac{1}{2} \hat{\sigma}^2) -1 \quad \text{ for } e = (e^{\mathtt{c}},e^{\mathtt{i}}) \in E := E^{\mathtt{c}} \times E^{\mathtt{i}},  \]
	for additional common market parameters $\hat{\kappa} \in \mathbb{R}$, $\hat{\sigma} >0$. Note that $\nu(\{\eta < -1\}) = 0$.

The idiosyncratic noise enters through the individual signals our agents receive. Each type of investor $\mathtt{t} \in \{\mathtt{A},\mathtt{B}\}$ receives, in case of an imminent jump in the stock price, a non-zero signal with an individual probability denoted by $p_s^\mathtt{t} \in [0,1]$. Such a non-zero signal will be a categorical indication about the sign and the size of a idiosyncratically noisy version of the impending common price shock $\eta(e)=\eta(e^{\mathtt{c}})$. 

Specifically, we consider a perturbed version of this price shock
	\[ z^\mathtt{t}(e) := z^\mathtt{t}(e^\mathtt{c},e^\mathtt{i}_1) :=  \rho^{\mathtt{t}} e^{\mathtt{c}} + \sqrt{1-(\rho^{\mathtt{t}})^2} e_1^{\mathtt{i}}, \]
	where $|\rho^{\mathtt{t}}| < 1$ determines the signal quality for an investor of type $\mathtt{t} \in \{\mathtt{A},\mathtt{B}\}$. 
Since our existence theorems only work with finitely many distinct signals, we cannot use this $z^{\mathtt{t}}$ to define Type $\mathtt{t}$'s signal map $\zeta^{\mathtt{t}}$. Instead, we ensure $|\zeta^{\mathtt{t}}(E)| < \infty$ for $\mathtt{t} \in \{\mathtt{A},\mathtt{B}\}$ by categorizing the perturbed jump mark $z^{\mathtt{t}}(e)$ as ``small'', ``medium'', or ``large'' in absolute size and give an indication of the direction. Specifically, we put:
    \begin{align*}
        \zeta^{\mathtt{t}}(e) = \zeta^{\mathtt{t}}(e^\mathtt{c},e^\mathtt{i}_1,e^\mathtt{i}_2) = \mathds{1}_{[0,p_s^\mathtt{t}]}(e^{\mathtt{i}}_2) \cdot \mathrm{sign}(z^{\mathtt{t}}(e)) \cdot 
        \begin{cases}
           0.5 \quad & \text{ if } \quad  |z^{\mathtt{t}}(e)| \leq 0.5,\\
            1 \quad & \text{ if  } \quad  0.5 <|z^{\mathtt{t}}(e)| \leq 1,\\
             +\infty \quad & \text{ if } \quad |z^{\mathtt{t}}(e)| >1,
        \end{cases} 
    \end{align*} 

Thus, for both types the signal range is $\zeta^{\mathtt{t}}(E) \cup \{0\}=\{-\infty,-1,-0.5,0,0.5,1,\infty\}$. Still, signals may differ both in their frequency as parametrized by the probability $p^\mathtt{t}_s$ of signal reception and in their quality as determined by the perturbation parameter $\rho^\mathtt{t}$. \autoref{fig:signal1} and \autoref{fig:signal2} illustrate this choice of jump signals.

\begin{figure}[H]
  \centering
  \includegraphics[width=0.8\linewidth]{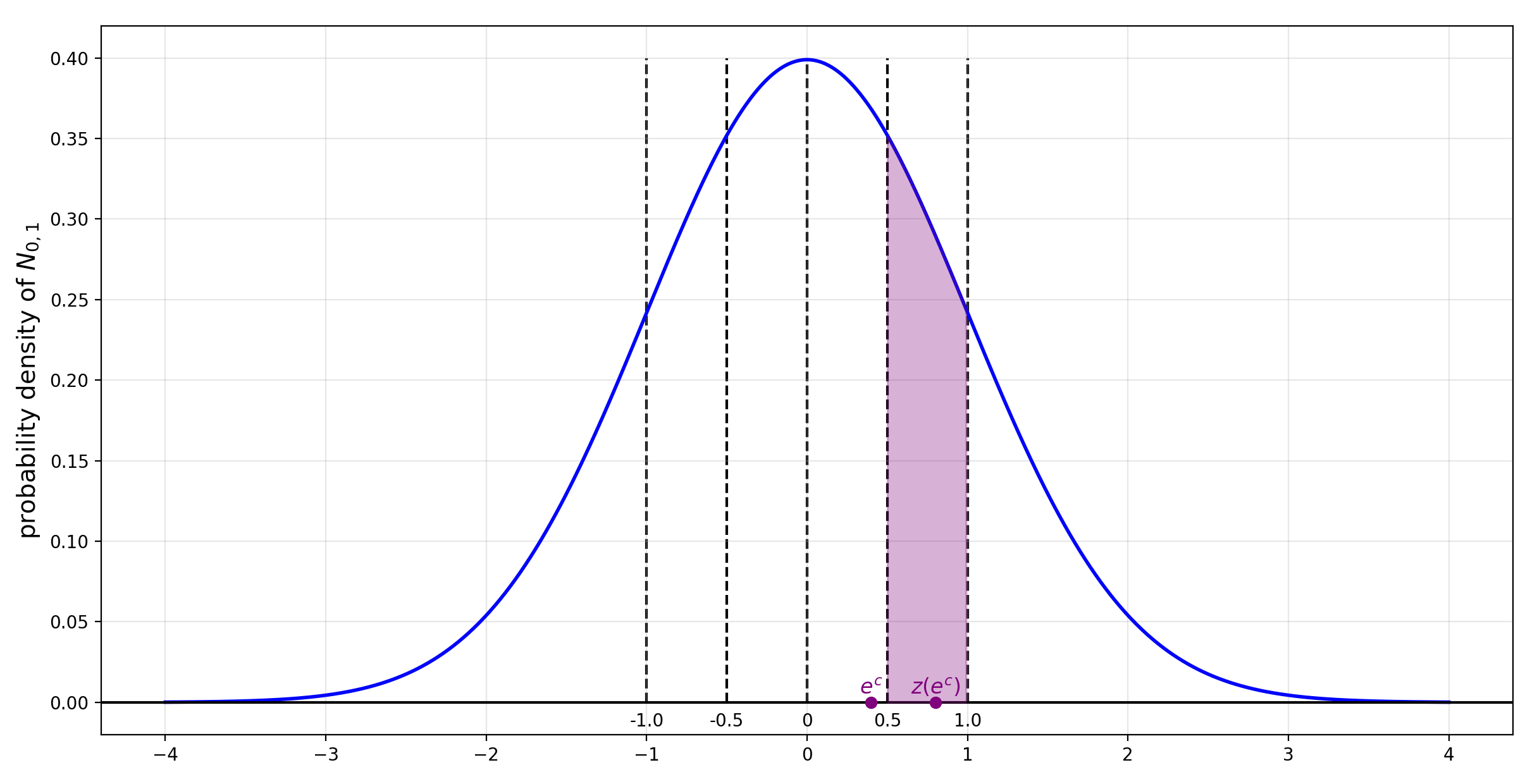}
\caption{For the original jump mark $e^{\mathtt{c}} = 0.4$ with perturbed jump mark $z^\mathtt{t}(e) = 0.8$ the signal $\zeta^\mathtt{t}(e)$ is $1$ indicating that the perturbed jump mark lies in the interval $(0.5,1]$.}
\label{fig:signal1}
\end{figure}

\begin{figure}[H]
  \centering
  \includegraphics[width=0.8\linewidth]{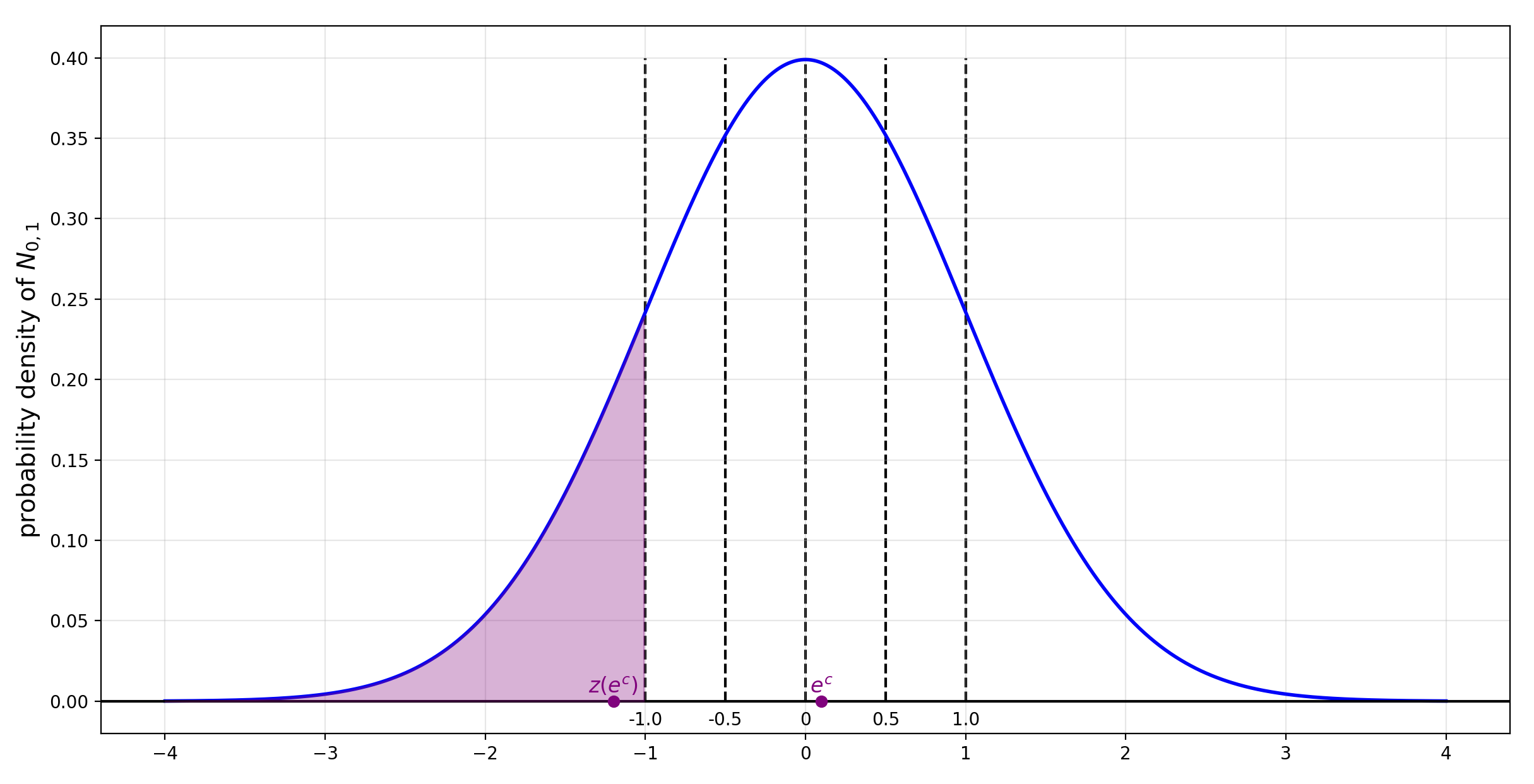}
\caption{For the original jump mark $e^{\mathtt{c}} = 0.1$ with perturbed jump mark $z^\mathtt{t}(e) = -1.2$ the signal $\zeta^\mathtt{t}(e)$ is $-\infty$ indicating that the perturbed jump mark lies in the interval $(-\infty,-1)$.}
\label{fig:signal2}
\end{figure}

Specifying the following intervals of integration given a signal $z \in \zeta(E)$
    \begin{align*}
        I(z) := \begin{cases}
            (1,+\infty) \quad & \text{ if } z=+\infty,\\
            (0.5,1] \quad & \text{ if } z=1,\\
            (0,0.5] \quad & \text{ if } z=0.5,\\
            [-0.5,0) \quad & \text{ if } z=-0.5,\\
            [-1,0.5) \quad & \text{ if } z=-1,\\
            (-\infty,-1) \quad & \text{ if } z=-\infty,\\
        \end{cases}
    \end{align*}
we find the disintegration of $\nu(\cdot \cap \{\zeta^{\mathtt{t}} \neq 0\})$ for type $\mathtt{t} \in \{\mathtt{A},\mathtt{B}\}$ to result in the signal frequency
   \begin{align*}
        \mu^{\mathtt{t}}(\{z\}) &= \lambda p^\mathtt{t}_s N_{0,1}(I(z)) \text{ for } z \in \zeta^{\mathtt{t}}(E) \setminus \{0\},
    \end{align*}
and, for a signal $z \in \zeta^{\mathtt{t}}(E) \setminus \{0\}$, we find the a posteriori distribution for the jump marks to be
	\begin{align*}
		K^\mathtt{t}(z,de) &= \frac{ \mathds{1}_{I(z)}(z^{\mathtt{t}}(e^{\mathtt{c}},e^{\mathtt{i}}_1)) }{N_{0,1}(I(z))}
        N_{0,1}(de^{\mathtt{c}}) \otimes N_{0,1}(de^{\mathtt{i}}_1).
	\end{align*}
    
We impose that the probability of receiving a signal in case of a jump satisfies $p_s^\mathtt{t} < 1$ and, hence, find that $\Phi^\mathtt{t}(z) = [0,1]$ for all $z \in \zeta(E) \cup \{0\}$.\\

In the remaining part of this paragraph, we explain how to efficiently compute the optimal investment positions for an investor of type $\mathtt{t} \in \{\mathtt{A},\mathtt{B}\}$. Readers may skip this section at first reading, and instead proceed directly to the numerical results and their discussion in Subsection~\ref{sec:numerics2}.

For a signal-based strategy $\pi \in \bar{A}_{\text{sig}}$, we recall that the representative investor's best response will be to invest a fraction
\begin{equation*}
        \begin{split}
            \phi_\pi^{\mathtt{t},\star}(0) &= \argmax_{\varphi \in \bar{\Phi}^\mathtt{t}(0)} \bigg\{\varphi(\kappa-r) - \tfrac{1}{2} \alpha^{\mathtt{t}} \varphi^2 (\sigma^2+(\sigma^0)^2)  - \theta^{\mathtt{t}}(1-\alpha^{\mathtt{t}}) \sigma^0\varphi \overline{\sigma^0\pi}  \\
            & \qquad \qquad \qquad + \frac{\lambda(1-p_s^\mathtt{t})}{1-\alpha^{\mathtt{t}}} \bigg[ \int_{\mathbb{R}} \big[ (1+\varphi \eta(e^{\mathtt{c}}))^{1-\alpha^{\mathtt{t}}} (\overline{1+ (\pi \circ \zeta)\eta}(e^{\mathtt{c}}) )^{-\theta^{\mathtt{t}}(1-\alpha^{\mathtt{t}})} - 1 \big] N_{0,1}(de^{\mathtt{c}}) \bigg] \bigg\}
        \end{split}
    \end{equation*}
when not receiving a signal.
We can explicitly compute
	\begin{equation*}
		\overline{\sigma^0\pi} = \bar{\mathbb{E}}[\sigma^0\pi(0)] = \sigma^0 (p^\mathtt{A} \pi^{\mathtt{A}}(0) + p^\mathtt{B} \pi^\mathtt{B}(0))
	\end{equation*}
	and
    \begin{equation}\label{eq:compute_mean}
    \begin{split}
        &\overline{1+ (\pi \circ \zeta)\eta}(e^{\mathtt{c}})\\ 
        &\quad = \exp\bigg\{ \sum_{\mathtt{t} \in \{ \mathtt{A},\mathtt{B}\}} p^{\mathtt{t}} \int_{E^{\mathtt{i}}} \log(1+\pi^{\mathtt{t}}(\zeta^{\mathtt{t}}(e^{\mathtt{c}},e^{\mathtt{i}}))\eta^{\mathtt{t}}(e^{\mathtt{c}},e^{\mathtt{i}})) \nu_{i}(de^{\mathtt{i}}) \bigg\}\\
            &\quad = \exp\bigg\{ p^{\mathtt{A}} \bigg( (1-p^{\mathtt{A}}_s) \log(1+\pi^{\mathtt{A}}(0)\eta(e^{\mathtt{c}}))  + p^{\mathtt{A}}_s \int_{\mathbb{R}} \log(1+\pi^{\mathtt{A}}(\zeta^{\mathtt{A}}(e^{\mathtt{c}},e^{\mathtt{i}}_1))\eta(e^{\mathtt{c}})) N_{0,1}(de_1^{\mathtt{i}}) \bigg) \\
            & \qquad \qquad \quad   + p^{\mathtt{B}} \bigg( (1-p^{\mathtt{B}}_s) \log(1+\pi^{\mathtt{B}}(0)\eta(e^{\mathtt{c}}))  + p^{\mathtt{B}}_s \int_{\mathbb{R}} \log(1+\pi^{\mathtt{B}}(\zeta^{\mathtt{B}}(e^{\mathtt{c}},e^{\mathtt{i}}_1))\eta(e^{\mathtt{c}})) N_{0,1}(de_1^{\mathtt{i}}) \bigg) \bigg\}.        
    \end{split}
    \end{equation}
Note that, given $e^{\mathtt{c}} \in E^\mathtt{c}$,  a non-zero signal $0 \neq z = \zeta^{\mathtt{t}}(e^{\mathtt{c}},e^{\mathtt{i}})$ allows one to deduce that the mark part $e^{\mathtt{i}}$ has to be in the interval
\begin{align*}
    I(z,e^c) := \begin{cases}
            \left( \frac{1 - \rho^{\mathtt{t}} e^{\mathtt{c}}}{\sqrt{1-(\rho^{\mathtt{t}})^2}}, + \infty \right) \quad & \text{ if } z=+\infty,\\
            \left( \frac{z - 0.5- \rho^{\mathtt{t}} e^{\mathtt{c}}}{\sqrt{1-(\rho^{\mathtt{t}})^2}}, \frac{z - \rho^{\mathtt{t}} e^{\mathtt{c}}}{\sqrt{1-(\rho^{\mathtt{t}})^2}} \right) \quad & \text{ if } z=0.5 \text{ or } z=1,\\
            \left( \frac{z - \rho^{\mathtt{t}} e^{\mathtt{c}}}{\sqrt{1-(\rho^{\mathtt{t}})^2}} , \frac{z + 0.5 - \rho^{\mathtt{t}} e^{\mathtt{c}}}{\sqrt{1-(\rho^{\mathtt{t}})^2}} \right) \quad & \text{ if } z=-0.5 \text{ or } z=-1,\\
            \left( -\infty, \frac{-1 - \rho^{\mathtt{t}} e^{\mathtt{c}}}{\sqrt{1-(\rho^{\mathtt{t}})^2}} \right) \quad & \text{ if } z=-\infty,\\
        \end{cases}
\end{align*}
Hence, the integrals in \eqref{eq:compute_mean} simplify:
    \begin{align*}
        \int_{\mathbb{R}} \log(1+\pi^{\mathtt{t}}(\zeta^{\mathtt{t}}(e^{\mathtt{c}},e^{\mathtt{i}}_1))\eta(e^{\mathtt{c}})) N_{0,1}(de_1^{\mathtt{i}}) = \sum_{z \in \zeta^{\mathtt{t}}(E) \setminus \{0\}} \log(1+\pi^{\mathtt{t}}(z)\eta(e^{\mathtt{c}})) N_{0,1}(I(z,e^{\mathtt{c}}))
    \end{align*}
which is easily computed using the cumulative distribution function of the standard normal distribution.

Similarly, we can compute the components of the best response to $\pi$ when receiving a non-zero signal $z\neq 0$:
    \begin{align*}
    	\phi_\pi^{\mathtt{t},\star}(z) &= \argmax_{\varphi \in \bar{\Phi}^\mathtt{t}(z)} \bigg\{ \int_{\mathbb{R}} \bigg[ \frac{(1+\varphi \eta(e^{\mathtt{c}}))^{1-\alpha^{\mathtt{t}}} (\overline{1+ (\pi \circ \zeta)\eta}(e^{\mathtt{c}}) )^{-\theta^{\mathtt{t}}(1-\alpha^{\mathtt{t}})} - 1}{(1-\alpha^{\mathtt{t}})} \bigg]  
        \frac{N_{0,1}(I(z,e^{\mathtt{c}}))}{N_{0,1}(I(z))}
        N_{0,1}(de^{\mathtt{c}}) \bigg\}.
    \end{align*}

\bigskip

\subsection{Numerical experiments}\label{sec:numerics2}
For our experiments,  we choose the market parameters to be
    \[ r= 0, \ \kappa = 0.08, \ \hat{\kappa} = 0, \ \sigma^0 = 0.3, \ \hat{\sigma} = 0.1, \ \lambda = 10, \]
and the time horizon for investments is $T=1$.\\

We investigate how the presence of investment signals impacts investors with relative performance concerns. For this, we consider investors of type $\mathtt{A}$ as a reference point and investigate how they assess their prospects when investors of type $\mathtt{B}$ receive more/less accurate or more/less frequent signals. To compare the outcomes of the different equilibria, we introduce a suitable certainty equivalent. The reference point for this certainty equivalent is in each case an equilibrium for the fully homogeneous reference type environment 
    \begin{align*}
        \mathtt{T}^{\text{ref}} &=  \{ (x_0^{\mathtt{A},\text{ref}}, p^{\mathtt{A},\text{ref}}, p^{\mathtt{A},\text{ref}}_s, \rho^{\mathtt{A},\text{ref}}, \theta^{\mathtt{A},\text{ref}},\alpha^{\mathtt{A},\text{ref}}), (x_0^{\mathtt{B},\text{ref}}, p^{\mathtt{B},\text{ref}}, p^{\mathtt{B},\text{ref}}_s, \rho^{\mathtt{B},\text{ref}}, \theta^{\mathtt{B},\text{ref}},\alpha^{\mathtt{B},\text{ref}}) \} \\
        &=  \{ (1, 0.5, 0.5, 0.5, 1.0 ,2.0), (1, 0.5, 0.5, 0.5, 1.0, 2.0) \}.
    \end{align*}
    For our primary analysis, we deliberately fix the concern parameters at their maximum, $\theta^{\mathtt{A},\text{ref}} = \theta^{\mathtt{B},\text{ref}} = 1.0$, and the relative risk aversion parameters at a moderate $\alpha^{\mathtt{A},\text{ref}} = \alpha^{\mathtt{B},\text{ref}} = 2.0$ to isolate the informational effects under conditions of maximal peer pressure. Unless stated otherwise, these parameters serve as our standard reference setting. 
    
    We then let the characteristics of the investors of type $\mathtt{B}$ deviate from those in the reference scenario; that is, we consider an alternative type environment 
    \begin{align*}
        \mathtt{T}^\text{alt} &= \{ (x_0^{\mathtt{A},\text{ref}}, 1 - p^{\mathtt{B},\text{alt}}, p^{\mathtt{A},\text{ref}}_s, \rho^{\mathtt{A},\text{ref}}, \theta^{\mathtt{A},\text{ref}},\alpha^{\mathtt{A},\text{ref}}), (x_0^{\mathtt{B},\text{alt}}, p^{\mathtt{B},\text{alt}}, p^{\mathtt{B},\text{alt}}_s, \rho^{\mathtt{B},\text{alt}}, \theta^{\mathtt{B},\text{alt}},\alpha^{\mathtt{B},\text{alt}}) \} 
    \end{align*}
for various choices of proportion $p^{\mathtt{B},\text{alt}}$ of type $\mathtt{B}$ investors, their signal frequency $p^{\mathtt{B},\text{alt}}_s$, and signal quality $\rho^{\mathtt{B},\text{alt}}$. Except for the final section where we explore the interaction between relative performance pressure and risk aversion, we keep the concern parameter $\theta^{\mathtt{B},\text{alt}} =1.0$ and relative risk aversion $\alpha^{\mathtt{B},\text{alt}}=2.0$ fixed at their reference levels.

For both the reference and alternative parameters, we compute the equilibrium strategies via numerical fixed-point iterations. Specifically, we iteratively evaluate the best response controls from Section~\ref{sec:numerics1} using numerical integration and optimization methods until the maximum absolute deviation between successive policy iterations falls below a predetermined tolerance. The expected utilities for investors of type $\mathtt{A}$ will typically differ between these equilibria. We then determine the certainty equivalent as the initial capital $x_0^\mathtt{A}$ that an investor of type $\mathtt{A}$ would require in the reference setting $\mathtt{T}^{\text{ref}}$ to achieve the same expected utility as in the alternative setting $\mathtt{T}^{\text{alt}}$: 
	\begin{equation*}
		x_0^\mathtt{A} = \exp \left(M^{\mathtt{A},\text{alt}}_{\pi^{\star,\text{alt}}} - M^{\mathtt{A},\text{ref}}_{\pi^{\star,\text{ref}}}\right)
	\end{equation*}
	for $M_{\pi^{\star,\text{ref}}}^{\mathtt{A},\text{ref}}$ and $M_{\pi^{\star,\text{alt}}}^{\mathtt{A},\text{alt}}$ as in \eqref{eq:M_mf} for corresponding type spaces $\mathtt{T}^\text{ref}$ and $\mathtt{T}^\text{alt}$ and equilibrium strategies $\pi^{\star,\text{ref}}$ and $\pi^{\star,\text{alt}}$, respectively.\\
	
For ease of presentation, we will sometimes refer to investors of type $\mathtt{A}$ as Type $\mathtt{A}$ and to investors of type $\mathtt{B}$ as Type $\mathtt{B}$. Also, whenever it is clear, we will drop the superscripts `ref' and `alt', and as we always consider equilibrium strategies we drop the $\star$-superscript for the strategies.\\

Below we first address the overarching question \textit{How much should we care about what others know?} by investigating the impact of Type $\mathtt{B}$'s parameter variations on the certainty equivalent $x_0^\mathtt{A}$. Second, we analyze Type $\mathtt{A}$'s and Type $\mathtt{B}$'s equilibrium strategy for changes in Type $\mathtt{B}$'s signal quality. Finally, we evaluate the efficiency of potential countermeasures, contrasting higher signal quality versus frequency under varying degrees of relative performance concerns and risk aversion.

\bigskip

\textbf{How much should we care about what others know?} \autoref{fig:plot_threat_power} illustrates how the certainty equivalent $x_0^{\mathtt{A}}$ of Type $\mathtt{A}$ responds to variations in the Type $\mathtt{B}$'s population share $p^{\mathtt{B},\text{alt}}$ and signal quality $\rho^{\mathtt{B},\text{alt}}$. The vertical $1.0$ contour line at $\rho^{\mathtt{B},\text{alt}} = 0.5$ refers back to Type $\mathtt{A}$'s utility in the symmetric reference setting and confirms a common intuition: any informational advantage in signal quality held by Type $\mathtt{B}$ ($\rho^{\mathtt{B},\text{alt}} > \rho^{\mathtt{B},\text{ref}}  = 0.5$) reduces Type $\mathtt{A}$'s utility ($x_0^{\mathtt{A}} < 1.0$), while any disadvantage ($\rho^{\mathtt{B},\text{alt}} < \rho^{\mathtt{B},\text{ref}} = 0.5$) benefits Type $\mathtt{A}$ ($x_0^{\mathtt{A}} > 1.0$), regardless of their share in the population.

\begin{figure}[h!]
     \centering
     \begin{subfigure}[b]{0.48\textwidth}
        \centering
        \includegraphics[width=\linewidth]{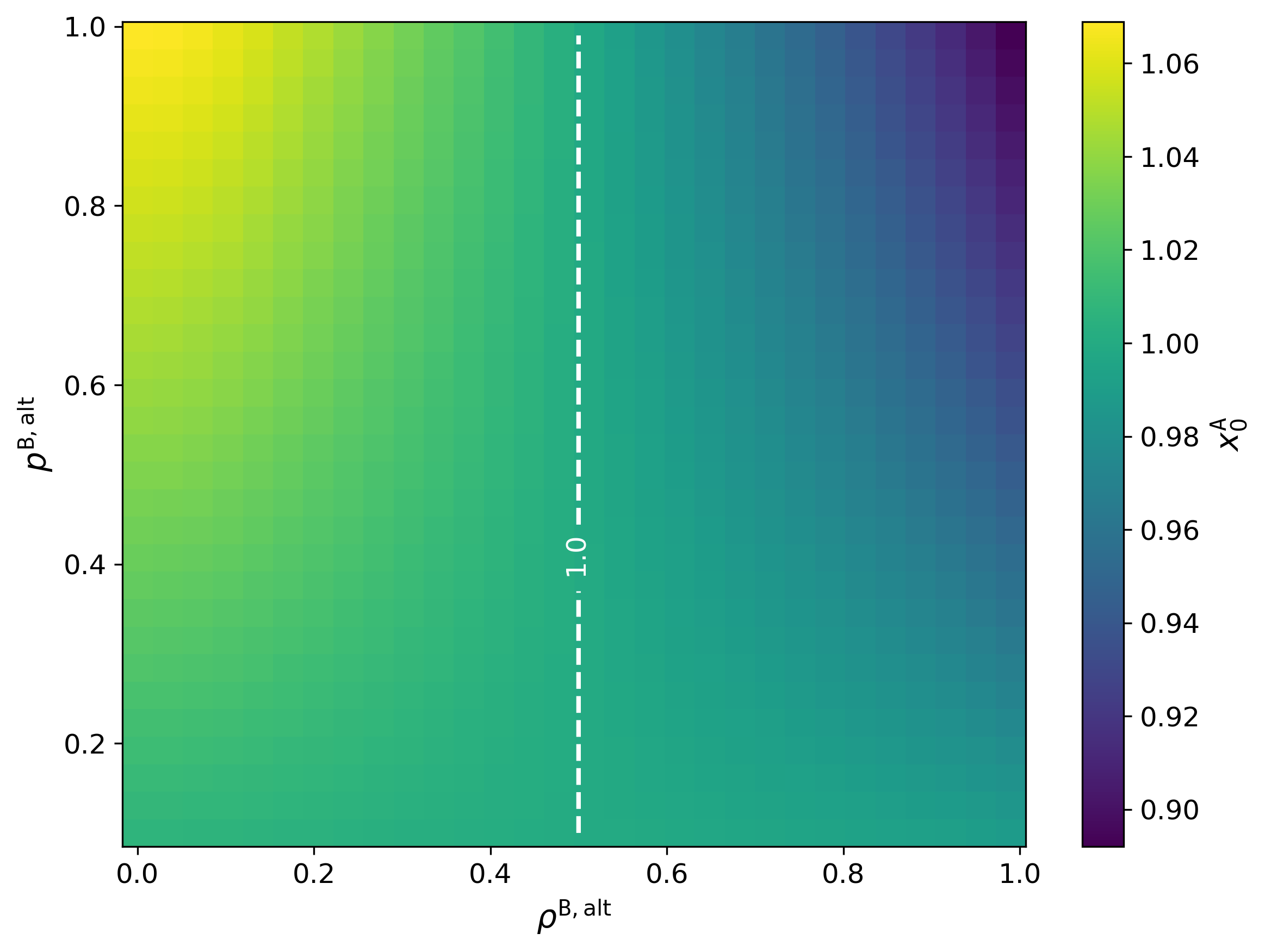}
        \caption{Impact of population share and signal quality.}
        \label{fig:plot_threat_power}
     \end{subfigure}
     \hfill
     \begin{subfigure}[b]{0.48\textwidth}
        \centering
        \includegraphics[width=\linewidth]{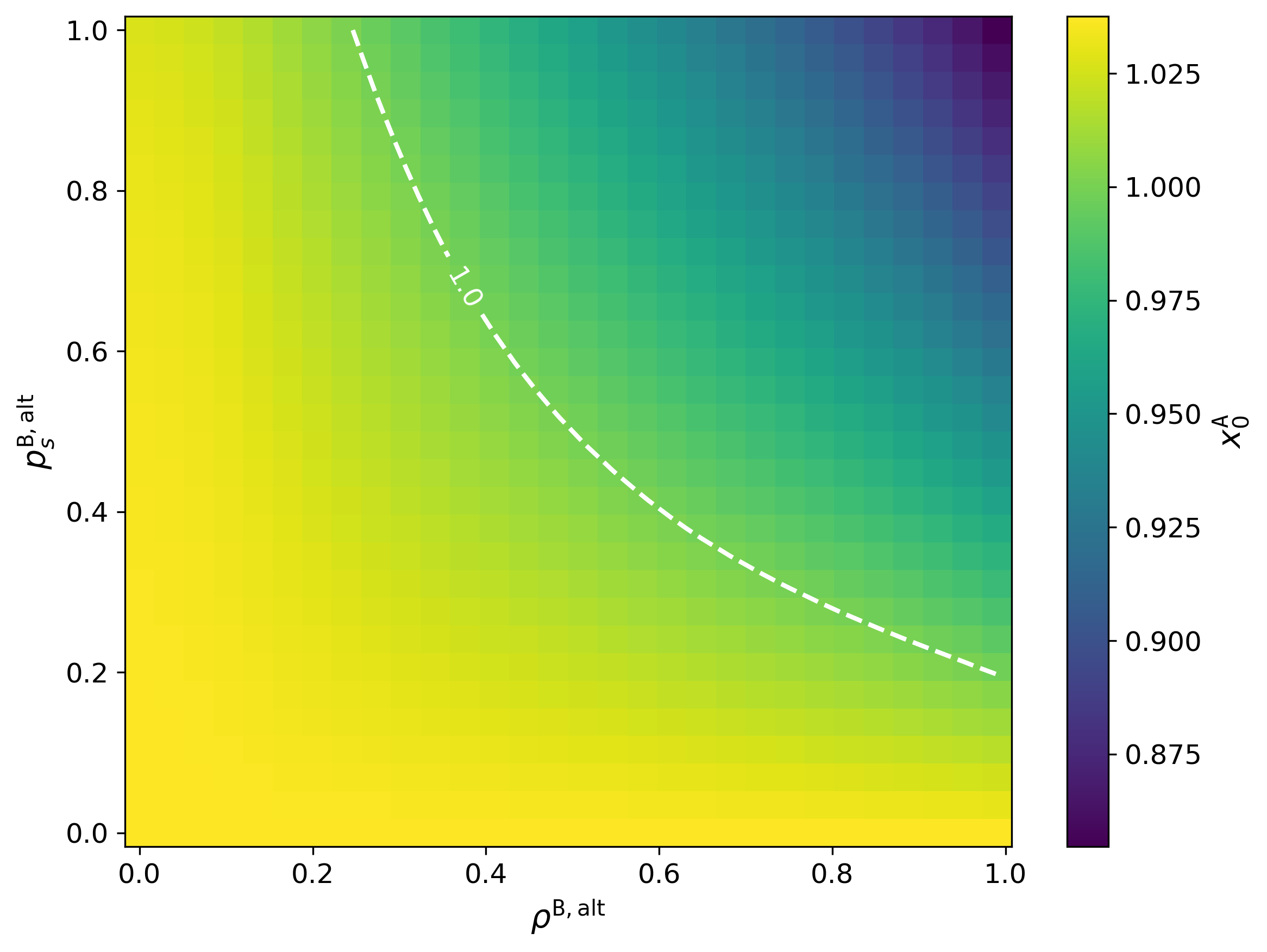}
        \caption{Impact of signal frequency and signal quality.}
        \label{fig:plot_threat_geometry}
     \end{subfigure}
\caption{Certainty equivalent $x_0^{\mathtt{A}}$ of Type $\mathtt{A}$ under varying parameters of Type $\mathtt{B}$. (A) illustrates the dependency on Type $\mathtt{B}$'s population share $p^{\mathtt{B},\text{alt}}$ and signal quality $\rho^{\mathtt{B},\text{alt}}$. (B) shows the interaction between Type $\mathtt{B}$'s signal frequency $p_s^{\mathtt{B},\text{alt}}$ and signal quality $\rho^{\mathtt{B},\text{alt}}$. The dashed white line represents the $x_0^{\mathtt{A}} = 1.0$ contour.}
\end{figure}

However, through the relative performance criterion, the magnitude of this utility shift is dictated by the population share of Type $\mathtt{B}$. On the one hand, when Type $\mathtt{B}$'s population share is small, the utility decay caused by their superior signal quality is marginal, as is the utility benefit when Type $\mathtt{B}$ holds inferior signal quality. On the other hand, when the population share of Type $\mathtt{B}$ is large, the changes to the certainty equivalent for Type $\mathtt{A}$ become substantially more pronounced. A structurally similar amplification effect is observed if we vary Type $\mathtt{B}$'s signal quantity instead of signal quality.

Furthermore, \autoref{fig:plot_threat_geometry} varies both Type $\mathtt{B}$'s alternative signal quantity $p_s^{\mathtt{B},\text{alt}}$ and their alternative signal quality $\rho^{\mathtt{B},\text{alt}}$. Here, the $1.0$ contour delineates the frontier where the combination of Type $\mathtt{B}$'s information parameters yields the same relative performance for Type $\mathtt{A}$ as in the reference equilibrium. The convexity of this frontier illustrates a trade-off: an advantage for Type $\mathtt{B}$ in one informational dimension only harms Type $\mathtt{A}$ if it is not offset by a sufficient disadvantage in the other relative to the reference setting. Consequently, both frequent but noisy signals, as well as rare but precise signals, actively benefit Type $\mathtt{A}$, pushing $x_0^{\mathtt{A}}$ above the $1.0$ baseline. Therefore, reductions in the certainty equivalent are confined to the top-right regime, where Type $\mathtt{B}$ possesses simultaneously high signal frequency and signal quality.

Both \autoref{fig:plot_threat_power} and \autoref{fig:plot_threat_geometry} reveal a striking asymmetry in the certainty equivalent. For instance, when Type $\mathtt{B}$ gains a complete informational advantage ($p_s^{\mathtt{B},\text{alt}}, \rho^{\mathtt{B},\text{alt}} \to 1$), Type $\mathtt{A}$'s certainty equivalent drops significantly (e.g., nearing $0.85$, representing a $-15\%$ downside). Conversely, when Type $\mathtt{B}$ is completely uninformed, the upside is capped much lower (e.g., near $1.05$, yielding only a $+5\%$ upside). This asymmetry, which structurally persists even at lower concern levels, such as $\theta=0.5$, demonstrates that in a relative performance setting, trailing a well-informed peer is harsher than the reward for beating a poorly-informed one. \\ 

Having established that information asymmetry poses a substantial threat to utility, we next investigate how the optimal equilibrium strategies of both types adapt to these informational shifts.\\

\begin{figure}[h]
\centering
 \includegraphics[width=1\linewidth]{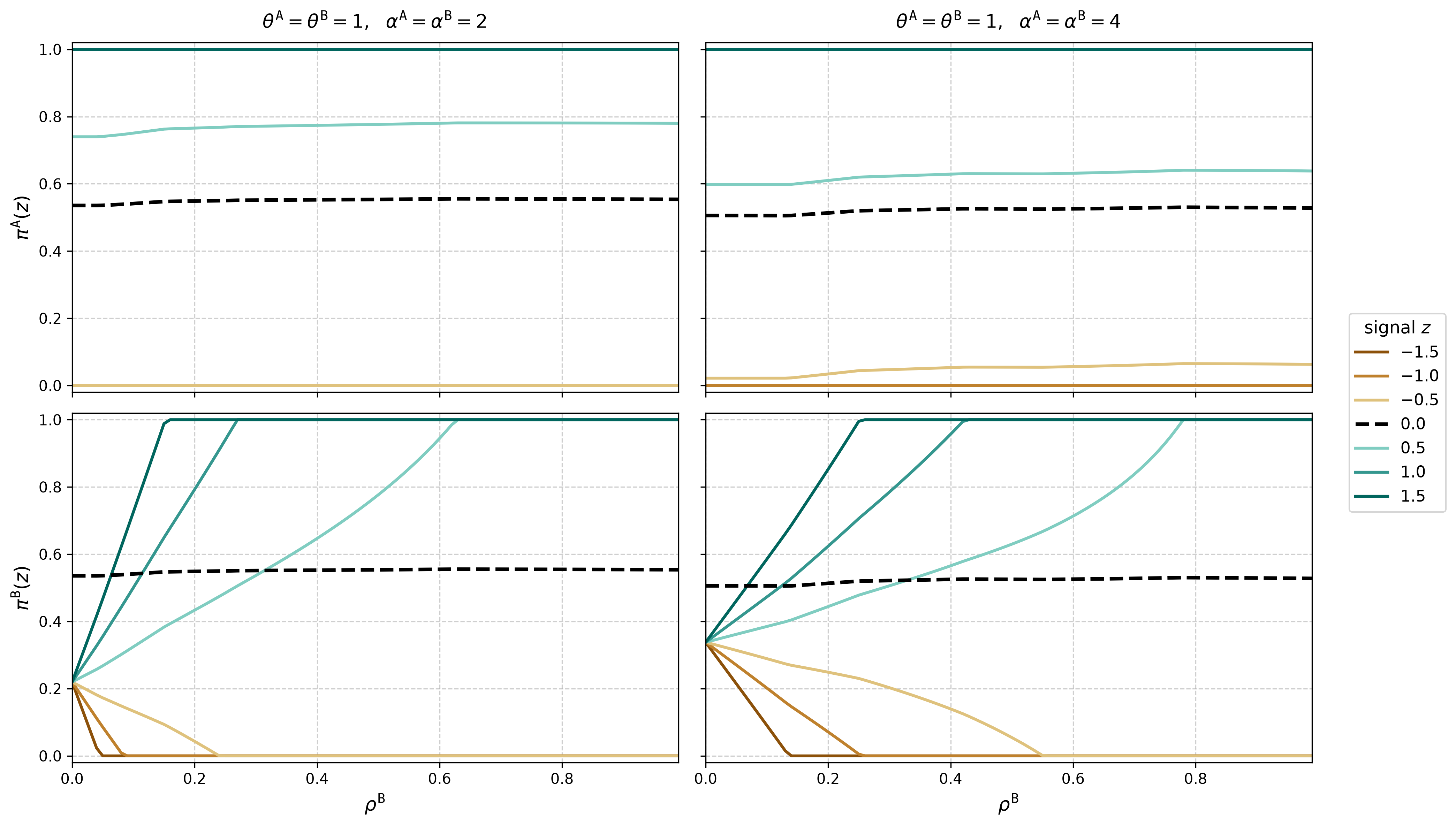}
\caption{Signal-dependent equilibrium strategies $\pi^{\mathtt{A}}(z)$ (top row) and $\pi^{\mathtt{B}}(z)$ (bottom row) as functions of Type $\mathtt{B}$'s signal quality $\rho^{\mathtt{B}}$, at maximal concern $\theta^{\mathtt{A}}=\theta^{\mathtt{B}}=1$ and two levels of risk aversion: $\alpha^{\mathtt{A}}=\alpha^{\mathtt{B}}=2$ (left) and $\alpha^{\mathtt{A}}=\alpha^{\mathtt{B}}=4$ (right). Each coloured curve corresponds to a signal value $z$; the dashed line is the no-signal strategy ($z=0.0$). Note that $\pi^{\mathtt{A}}(-1.5) \equiv \pi^{\mathtt{A}}(-1.0) \equiv 0$ in both environments.}
\label{fig:plot_flight}
\end{figure}

\textbf{How do investment decisions change?} \autoref{fig:plot_flight} displays both types' equilibrium strategies for each signal $z \in \{-1.5,-1.0,-0.5,0.0,0.5,1.0,1.5\}$ as Type $\mathtt{B}$'s signal quality $\rho^{\mathtt{B}}$ increases from $0$ to $1$, at maximal concern $\theta^{\mathtt{A}}=\theta^{\mathtt{B}}=1$ and two different levels of risk aversion ($\alpha^{\mathtt{A}}=\alpha^{\mathtt{B}}=2$ and $\alpha^{\mathtt{A}}=\alpha^{\mathtt{B}}=4$). The contrast between the types is stark, and it holds in both environments: while Type $\mathtt{B}$ reshapes its portfolio in response to better information, Type $\mathtt{A}$'s policy remains remarkably rigid, its zero-signal allocation changing by a negligible margin across the entire domain of the competitor's signal quality. This structural rigidity reveals that  Type $\mathtt{A}$ cannot easily trade its way out of a relative disadvantage.

This stoic response of Type $\mathtt{A}$ to changes in Type $\mathtt{B}$'s signal quality persists across may concern parameter and risk-aversion levels. 
On the flip side, this implies that to catch-up to better-informed peers, Type $\mathtt{A}$ must seek to improve its own information flow.

\bigskip

\textbf{Which adaptation is more efficient: increasing signal quality or quantity?} Since Type $\mathtt{A}$ cannot trade its way out of an informational disadvantage, its only recourse is to improve its own information which can be done in two ways: by increasing \emph{signal quality (precision)} $\rho^{\mathtt{A}}$ or \textit{signal quantity (frequency)} $p_s^{\mathtt{A}}$. \autoref{fig:plot_eff_defense} compares the effects of eitehr choice: in each economic environment it traces the two parity contours $x_0^{\mathtt{A}}=1.0$ that Type $\mathtt{A}$ must reach to offset Type $\mathtt{B}$'s rising signal quality $\rho^{\mathtt{B},\text{alt}}$, defending either by raising its own quality (red, $\rho^{\mathtt{A},\text{alt}}$) or its own frequency (blue, $p_s^{\mathtt{A},\text{alt}}$). Both curves are evaluated against a common baseline: the closer a contour remains to the horizontal 0.5 line, the less Type $\mathtt{A}$ must deviate from the reference to restore parity. Viewed this way, \autoref{fig:plot_eff_defense} shows that, for all three environments, restoring parity requires a smaller adjustment in signal quality than in frequency. Further experiments, not reported here, show that this qualitative trend also holds for parity contours responding to changes in Type $\mathtt{B}$'s signal quantity. Consequently, improving signal precision proves to be the more efficient lever.

\begin{figure}[h!]
     \centering
     \includegraphics[width=\linewidth]{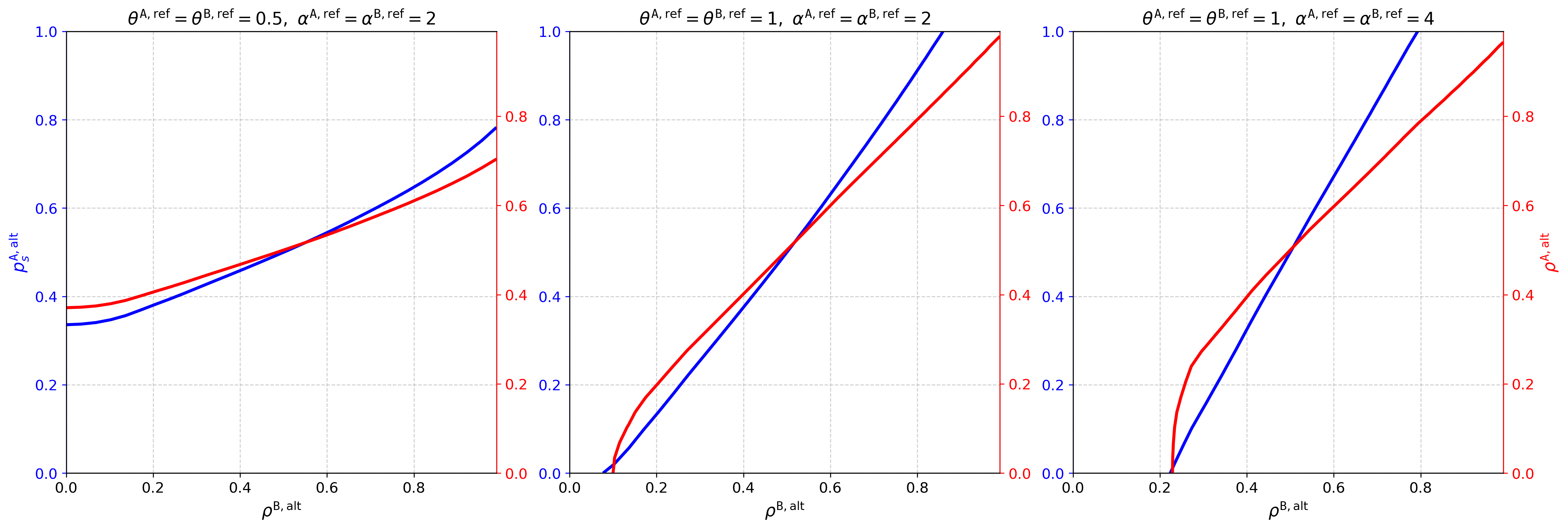}
\caption{Comparative efficiency of adapting signal quality $\rho^{\mathtt{A}}$ (red) versus signal frequency $p_s^{\mathtt{A}}$ (blue) to maintain parity ($x_0^{\mathtt{A}}=1.0$) against Type $\mathtt{B}$'s increasing signal quality $\rho^{\mathtt{B}}$. The panels illustrate how required parameter adaptations shift across different reference levels of relative performance concern $\theta$ and risk aversion $\alpha$.}
\label{fig:plot_eff_defense}
\end{figure}

In a disadvantaged situation ($\rho^{\mathtt{B},\text{alt}} > 0.5$), the reason signal quality is the more effective lever lies in the informational structure. What hurts Type $\mathtt{A}$ is the asymmetry in signal quality: a higher $\rho^{\mathtt{B},\text{alt}}$ lets Type $\mathtt{B}$ read the direction of an impending shock more accurately. At maximal concern ($\theta^{\mathtt{A},\text{ref}}=\theta^{\mathtt{B},\text{ref}}=1$), where everyone is graded purely on relative performance, the common signal quality restores parity close to the diagonal $\rho^{\mathtt{A},\text{alt}}\approx\rho^{\mathtt{B},\text{alt}}$ where the two types adopt near-identical signal responses and the mean field benchmark stays balanced. Under weaker concern ($\theta^{\mathtt{A},\text{ref}}=\theta^{\mathtt{B},\text{ref}}=0.5$) investors put emphasis on their individual wealth. Thus, in order to match the performance in the reference setting, Type $\mathtt{A}$ only requires modest increases in signal quality or quantity; so the contour is flatter.
By contrast, increasing signal quantity can only ensure parity as long as the signal quality $\rho^{\mathtt{B},\text{alt}}$ has not grown too much: beyond $\rho^{\mathtt{B},\text{alt}}\approx 0.9$ even being warned about every single price shock ($p_s^{\mathtt{A},\text{alt}}=1$) does not make up for the signal quality disadvantage $\rho^{\mathtt{A},\text{ref}} = 0.5$.

Conversely, the advantageous situation ($\rho^{\mathtt{B},\text{alt}} < 0.5$) is more surprising: as $\rho^{\mathtt{B},\text{alt}}\to0$, even for pure noise signals ($\rho^{\mathtt{A},\text{alt}}=0$) or without any signals ($p_s^{\mathtt{A},\text{alt}}=0$), Type $\mathtt{A}$ is better off than in the reference setting. Here, due to the  high relative performance concerns $\theta^{\mathtt{A},\text{ref}}=\theta^{\mathtt{B},\text{ref}}=1$, investors seek to track their competitors' average wealth. This tracking becomes easier as the competitors' signal quality deteriorates since their investment choices narrow to a universal position in the limit; cf. \autoref{fig:plot_flight}. For higher risk-averion this narrowing of investment choices is more pronounced. As a result the cut-off level in the right panel is about twice as high as in the middle pannel of \label{fig:plot_eff_defense}.

\bibliographystyle{alpha}
\bibliography{references}

\end{document}